  \newtheorem{The}{Theorem}[section]
  \newtheorem{Pro}[The]{Proposition}
  \newtheorem{Lem}[The]{Lemma}
  \newtheorem{Cor}[The]{Corollary}
  \newtheorem{Def}[The]{Definition}
  \newtheorem{Defs}[The]{Definitions}
  \newtheorem{Rem}[The]{Remark}
  \newtheorem{Examp}[The]{Example}
    \let\oldproofname=\proofname
   \renewcommand{\proofname}{\textit{\rm\bf\oldproofname}}
\title{\LARGE\bf  Left  Co-K{\"o}the Rings and  Their Characterizations\thanks{The research of the second  author was in part supported by a grant from IPM  (No.1401130213). This research is partially carried out in the IPM-Isfahan Branch.}~\thanks {{\it Key Words}: Left co-K{\"o}the rings;  Finite representation type;  Square-free modules;  K{\"o}the rings.}
\thanks {2020{ \it Mathematics Subject Classification}. Primary 16D70, 16G60, 16D90; Secondary 16D10,  16P20. }}
\author{{\bf {\bf Shadi Asgari}$^{{\rm b}}$, Mahmood Behboodi$^{{\rm a,b,}}$\thanks {Corresponding author.}\hspace{1mm} and {\bf Somayeh Khedrizadeh}$^{{\rm a}}$}  \\
{\small{ $^{{\rm a}}$Department of Mathematical Sciences, Isfahan University of Technology}}\vspace{-1mm}\\
{\small{ P.O.Box :  84156-83111,   Isfahan,   Iran}}\\
{\small{ $^{{\rm b}}$School of Mathematics, Institute for Research in Fundamental Sciences
(IPM)}}\vspace{-1mm}\\ {\small{ P.O.Box : 19395-5746, Tehran, Iran}}\vspace{-1mm}\\
   {\small{sh{\_}asgari@ipm.ir}}\vspace{-1mm}\\
   {\small{mbehbood@iut.ac.ir}}\vspace{-1mm}\\
   {\small{s.khedrizadeh@math.iut.ac.ir}}}
  \date{}
\begin{document}
  \maketitle
 \vspace*{-0.5cm}
 \begin{abstract}
 \small{ 
\noindent K{\"o}the's classical problem posed by G. K{\"o}the  in 1935 asks to describe the rings $R$  such that every left  $R$-module is a direct sum of cyclic modules (these rings are known as left K{\"o}the rings). K{\"o}the, Cohen and Kaplansky solved this problem for all commutative rings (that are Artinian principal ideal rings).  During the years 1962 to 1965,  Kawada solved K{\"o}the's problem for basic fnite-dimensional algebras.  But, so far,  K{\"o}the's problem was open in the non-commutative setting. Recently, in the paper [{\it Several characterizations of left K{\"o}the rings}, submitted], we classified left K{\"o}the rings into three classes one contained in the other: {\it  left K{\"o}the rings},  {\it  strongly left K{\"o}the rings} and   {\it  very strongly left K{\"o}the rings}, and then, we solved K{\"o}the's problem by giving several characterizations of these rings in terms of describing the indecomposable modules.  In this paper, we will introduce the Morita duals of these  notions as  {\it  left co-K{\"o}the rings},  {\it  strongly left co-K{\"o}the rings} and   {\it  very strongly left co-K{\"o}the rings},   and then, we give several  structural  characterizations for each of them.}      \end{abstract}
 \section{\bf Introduction}    
 In 1935,  K{\"o}the  \cite{Kothe}  showed that over an Artinian principal ideal ring (i.e.,  a ring which is both a left and right Artinian,  and is both a left and a right principal ideal ring)   each module is a direct sum of cyclic modules, and  he posed the question to classify the rings  with this property. A ring for which each 
left (resp., right) module is a direct sum of cyclic modules, is now called a {\it left}  (resp., {\it right})  {\it K{\"o}the  ring}.   In 1941, Nakayama \cite{Nakayama1, Nakayama2} introduced the notion of generalized uniserial rings as a generalization of Artinian principal ideal  rings, and proved
that generalized uniserial rings are  K{\"o}the rings (a ring $R$  is called a {\it generalized uniserial ring}, if $R$ has a unit element  and if every left ideal $Re$ as well as every right ideal $eR$  generated by a primitive 
idempotent element $e$ possesses only one composition series). However, as is shown by Nakayama, the rings of this type are not general enough for solving K{\"o}the's problem (see  \cite[Page 289]{Nakayama2}).
  In 1951, Cohen and Kaplansky \cite{Cohen-Kaplansky} proved that all commutative K{\"o}the rings are Artinian principal ideal rings. Thus,  by combining the above results one obtains:
  \begin{The}
  {\rm (K{\"o}the,   \cite{Kothe})}. An Artinian principal ideal ring is a K{\"o}the ring.
   \end{The}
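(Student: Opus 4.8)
The plan is to derive the statement from the structural fact that an Artinian principal ideal ring is a \emph{generalized uniserial ring} (in Nakayama's sense), and then to invoke Nakayama's theorem, quoted above, that every generalized uniserial ring is a K{\"o}the ring. Note that the latter already delivers the conclusion for \emph{all} left modules, not merely the finitely generated ones (generalized uniserial rings are of finite representation type, hence pure-semisimple, so every module decomposes into finite-length indecomposables), so once the structural reduction is in hand there is nothing further to do; a symmetric argument on the right handles right modules, giving that $R$ is a K{\"o}the ring on both sides.

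Thus the entire content is to show that a left and right Artinian, left and right principal ideal ring $R$ is generalized uniserial, i.e. that for every primitive idempotent $e$ the indecomposable projectives $Re$ and $eR$ are uniserial. First I would record that, since $e$ is primitive, $\mathrm{End}_R(Re)$ is local, so $Re$ has a unique maximal submodule $Je$ (where $J=\mathrm{rad}(R)$) and hence a simple top; its radical series $Re\supseteq Je\supseteq J^2e\supseteq\cdots$ terminates at $0$ since $J$ is nilpotent. A finite-length module is uniserial precisely when each radical layer $J^ke/J^{k+1}e$ is simple, so the goal becomes the simplicity of every layer. Here the principal ideal hypothesis first enters: each $J^ke$ is a left ideal, hence cyclic, so each layer $J^ke/J^{k+1}e=J^ke/\mathrm{rad}(J^ke)$ is at least \emph{cyclic} as an $R/J$-module.

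The hard part is that cyclic does not force simple: over a non-basic semisimple ring $R/J\cong\prod_S M_{n_S}(D_S)$ a cyclic module may be a sum of several non-isomorphic simples, so from the layers alone the radical of $Re$ could branch and $Re$ need not be uniserial. To exclude branching one must apply principality to \emph{all} left ideals, not just the radical powers---most efficiently to the socle series. The idea is that a branch in some $Re$ forces a semisimple submodule of ${}_R R$ of the form $S_a\oplus S_b$ (or $S^{\oplus 2}$) whose multiplicity inside $\mathrm{soc}({}_R R)$ exceeds the relevant $n_S$, and such a semisimple left ideal is then not cyclic, contradicting the hypothesis; carrying this multiplicity bookkeeping through the socle filtration forces every layer to be simple, whence $Re$ is uniserial. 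Equivalently, and more transparently, one may quote the classical structure theorem that an Artinian principal ideal ring is a finite direct product $\prod_i M_{n_i}(S_i)$ of full matrix rings over uniserial (chain) rings $S_i$: each factor is Morita equivalent to the uniserial ring $S_i$, which is visibly generalized uniserial, making the reduction immediate. I expect this structural step---deducing uniseriality of the indecomposable projectives from principality---to be the main obstacle. The engine underneath is the purely local fact that a uniserial (chain) ring is a K{\"o}the ring, which is where the one-dimensional layer computation actually occurs; everything else is assembling products, transporting along Morita equivalence, and the elementary observation that a uniserial module, being local, is automatically cyclic.
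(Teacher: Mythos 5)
Your proposal follows exactly the route the paper itself indicates: it presents this theorem as obtained ``by combining the above results,'' namely Nakayama's facts that Artinian principal ideal rings are generalized uniserial and that generalized uniserial rings are K{\"o}the rings. Your additional work filling in why principality forces the indecomposable projectives to be uniserial (via the product-of-matrix-rings-over-chain-rings structure theorem) is correct and goes beyond what the paper records, but it is the same argument in substance.
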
 
   
   \begin{The}
  {\rm (K{\"o}the-Cohen-Kaplansky, \cite{Cohen-Kaplansky, Kothe}}).   A commutative ring $R$ is a K{\"o}the ring if and only if $R$ is an Artinian principal ideal ring.
   \end{The}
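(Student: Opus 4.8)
The plan is to prove the two implications separately, with the forward direction carrying essentially all of the content. The backward implication is immediate: if $R$ is a commutative Artinian principal ideal ring, then the preceding theorem of K{\"o}the (Theorem 1.1) already asserts that $R$ is a K{\"o}the ring, so nothing further is required. For the forward implication I assume the property $(K)$ that every $R$-module is a direct sum of cyclic modules and aim to deduce, in order, that $R$ is Noetherian, that $R$ is Artinian, and finally that every ideal is principal. A convenient preliminary observation is that $(K)$ is inherited by every homomorphic image $R/J$: an $R/J$-module is an $R$-module annihilated by $J$, and in its decomposition into cyclic $R$-modules $R/I$ each summand satisfies $J \subseteq I$, hence is cyclic over $R/J$. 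This lets me pass freely to factor rings.

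First I would show that $R$ is Noetherian, invoking the standard structural fact that a commutative ring over which every module is a direct sum of finitely generated modules is Noetherian; since cyclic modules are finitely generated, $(K)$ supplies the hypothesis. Next I upgrade Noetherian to Artinian. Over a commutative Noetherian ring the indecomposable injective modules are exactly the injective hulls $E(R/\mathfrak{p})$, $\mathfrak{p}$ prime (Matlis). Given an indecomposable injective $E$, property $(K)$ writes $E$ as a direct sum of cyclic modules; these summands are direct summands of the injective $E$, hence injective, and indecomposability of $E$ forces $E \cong R/I$ to be a single cyclic, hence finitely generated, module. Applying this to $E = E(R/\mathfrak{m})$ for a maximal ideal $\mathfrak{m}$: a cyclic module with $\operatorname{Ass} = \{\mathfrak{m}\}$ is $\mathfrak{m}$-primary, so $E(R/\mathfrak{m})$ has finite length; but $E(R/\mathfrak{m})$ has finite length only when $R_{\mathfrak{m}}$ is Artinian. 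As this holds for every $\mathfrak{m}$, the ring $R$ has Krull dimension $0$, and a Noetherian ring of dimension $0$ is Artinian.

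Now I reduce to the local case: a commutative Artinian ring decomposes as a finite product $R \cong R_1 \times \cdots \times R_n$ of local Artinian rings, each factor $(R_i,\mathfrak{m}_i,k_i)$ again satisfies $(K)$ by the inheritance remark, and conversely a finite product of principal ideal rings is a principal ideal ring; so it suffices to prove each $R_i$ is a principal ideal ring. Fix such a local Artinian $(R,\mathfrak{m},k)$ with $\mathfrak{m} \neq 0$. Since $E(k)$ is indecomposable injective, $(K)$ gives $E(k) \cong R/I$; as $E(k)$ is a faithful (injective cogenerator) module over the local ring $R$, we get $I = \operatorname{ann}_R(R/I) = 0$, i.e. $R \cong E(k)$ is self-injective, whence its socle $\operatorname{soc}(R) \cong \operatorname{soc}(E(k)) = k$ is simple. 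Suppose toward a contradiction that $\mathfrak{m}$ is not principal, so by Nakayama $\dim_k \mathfrak{m}/\mathfrak{m}^2 \geq 2$ and $\mathfrak{m}$ needs at least two generators, hence is not cyclic. On the other hand $\operatorname{soc}(\mathfrak{m}) = \operatorname{ann}_R(\mathfrak{m}) = \operatorname{soc}(R)$ is simple and, being the socle of a module over an Artinian ring, is essential in $\mathfrak{m}$; a module with simple essential socle is indecomposable. Thus $\mathfrak{m}$ is an indecomposable non-cyclic module, contradicting $(K)$. Therefore $\mathfrak{m} = (t)$ is principal, every ideal of $R$ is a power $(t^k)$, and $R$ is a (local, uniserial) principal ideal ring; reassembling the factors finishes the proof.

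I expect the conceptual heart—and the step most in need of care—to be the local argument identifying $R$ with $E(k)$ and extracting a concrete obstruction: the passage from ``$\mathfrak{m}$ not principal'' to ``an indecomposable non-cyclic module'' is what genuinely uses the full strength of $(K)$, and it is delicate because $\mathfrak{m}$ is indecomposable only after self-injectivity guarantees a simple socle (for a general Artinian local ring $\mathfrak{m}$ can split off semisimple pieces, as with $k[x,y]/(x,y)^2$). The reduction to the Noetherian case is the other potential sticking point; I would either cite the quoted structural theorem or, alternatively, re-derive Noetherianness through the injective-decomposition criterion of Bass--Papp, taking care that the cyclic injective summands produced by $(K)$ can in fact be refined into indecomposable injectives.
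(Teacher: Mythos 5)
Your argument is correct, but note that the paper does not actually prove this statement: Theorem 1.2 is presented as the classical K{\"o}the--Cohen--Kaplansky theorem, obtained by citing K{\"o}the for sufficiency (Theorem 1.1) and Cohen--Kaplansky \cite{Cohen-Kaplansky} for necessity, with no proof supplied. So there is no ``paper proof'' to match; what you have written is a genuine (and essentially the modern standard) proof of the hard direction. The chain Noetherian $\Rightarrow$ zero-dimensional $\Rightarrow$ Artinian $\Rightarrow$ product of local rings $\Rightarrow$ self-injective local case via $E(k)\cong R/I$ faithful $\Rightarrow$ simple socle $\Rightarrow$ $\mathfrak m$ uniform, hence indecomposable, hence cyclic, is sound; in particular the delicate point you flag yourself --- that $\mathfrak m$ is indecomposable only \emph{after} Gorensteinness is established --- is handled correctly (strictly, $\mathrm{soc}(\mathfrak m)=\mathfrak m\cap\mathrm{soc}(R)=\mathrm{soc}(R)$ because $\mathrm{ann}(\mathfrak m)$ consists of zero-divisors when $\mathfrak m\neq 0$, a half-line worth adding). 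Two remarks on economy. First, the paper's own toolkit shortcuts your first two steps: by Lemma 3.2 (Zimmermann-Huisgen--Zimmermann), a ring over which every module is a direct sum of cyclic (hence finitely generated) modules is left pure semisimple, and by Chase's theorem (cited as \cite[Theorem 4.4]{chase}) such a ring is Artinian; this replaces both the appeal to the ``direct sums of finitely generated modules implies Noetherian'' theorem and the Matlis-theoretic dimension argument. Second, the parenthetical ``hence injective'' for the cyclic summands of an indecomposable injective is never used --- indecomposability alone forces a single summand. Neither point is a gap; your proof stands as written.
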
 

 However,  a left Artinian principal left ideal ring $R$ need not be a left  K{\"o}the ring, even if $R$ is a local ring (see \cite[page 212, Remark (2)]{Faith}). Faith \cite{Faith} characterized all commutative rings,  whose proper factor rings are K{\"o}the rings. During the years 1962 to 1965,  Kawada \cite{Kawada1, Kawada2, Kawada3} solved K{\"o}the's problem for basic finite-dimensional algebras.   Behboodi et al. \cite{Behboodi1}  showed that  if $R$ is a left K{\"o}the ring in which all idempotents are central,  then $R$ is an Artinian principal right ideal ring.

  Throughout this paper, all rings have an identity element and all modules are unital. 
Let $M$ be an $R$-module.  The {\it socle} of $M$, denoted by $soc(M)$, is  
the sum of all simple  submodules of $M$. If there is no simple submodules in $M$ we put $soc(M) = 0$.  Dual to the socle the {\it radical}  of  $M$, denoted by $Rad(M)$, is the intersection of all maximal submodules of $M$. If $M$ has no maximal submodules we set $Rad(M) = M$.  If $M$ has a proper submodule which contains all other proper submodules, then $M$ is called a {\it local module} (i.e., $Rad(M)$ is a maximal submodule of $M$). A ring is a {\it local ring} if and only if $_RR$ (or $R_R$) is a local module. 
  For any  module $M$, the {\it top} of $M$, denoted by $top(M)$, is the factor module $M/Rad(M)$.   
 A module is called {\it square-free} if it does not contain a direct sum of two nonzero isomorphic submodules.  An Artinian module $M$ is 
square-free  if and only if $soc(M)$ is a  square-free module.  We will see that the square-free modules and decomposition  of (finitely generated)  modules into square-free modules are closely related to K{\"o}the's  problem. 

Most recently, in \cite{Behboodi3}, we provide several characterizations for a left K{\"o}the ring. In fact, in \cite{Behboodi3},  by using the following definitions, we classified left K{\"o}the rings into three classes one contained in the other: and then we  have solved  K{\"o}the's classical problem by giving several characterizations of these rings in terms of describing the indecomposable modules. To see the several structural characteristics of each of the following  rings  we refer the reader to  \cite[Theorems 2.6, 3.4, 3.5, 4.1 and 4.7 ]{Behboodi3}.
\begin{Defs}
{\rm We say that a ring $R$  is a:\\
{\it  - left}  (resp., {\it right})  {\it K{\"o}the ring}    if every left  (resp.,  right)  $R$-module is a  direct sum of   cyclic modules (i.e., $R$ is of finite representation type and every    indecomposable left $R$-module has a cyclic top by  \cite[Theorem 2.6]{Behboodi3}).\\
{\it - strongly left}  (resp., {\it right})  {\it K{\"o}the ring}    if every  nonzero left  (resp.,  right)  $R$-module is a  direct sum of  modules  with nonzero  square-free  cyclic top. \\
{\it - very  strongly  left}   (resp., {\it right}) {\it  K{\"o}the ring}  if every  nonzero left (resp.,  right)   $R$-module is a  direct sum of   modules with  simple top. \\
{\it  - K{\"o}the ring}  if $R$ is both  a  left and  a  right  K{\"o}the ring.\\
{\it - strongly  K{\"o}the ring}  if $R$ is both  a  strongly left and  a  strongly right  K{\"o}the ring.\\
{\it - very strongly  K{\"o}the ring}  if $R$ is both  a   very  strongly left and   right K{\"o}the  ring.}
\end{Defs}
 By   \cite[Theorem 2.6]{Behboodi3} a ring $R$ is a left  K{\"o}the  ring if and only if 
every  nonzero left $R$-module is a direct sum of  modules with nonzero socle and  cyclic top, if and only if, 
  $R$ is  left Artinian  and  every   left $R$-module  is a direct sum of modules with cyclic top. Also, in  \cite[Theorem 3.5]{Behboodi3},  
 we	have shown that a quasi-duo ring $R$ is a  left K{\"o}the ring if and only if every left $R$-module is a direct sum of modules with square-free top if and only if every left $S$-module is a direct sum of modules with square-free socle, where $S$ is a Morita dual ring of $R$ (see Remark \ref{left Morita} for the notion of Morita dual of a ring). These facts motivated us to define left co-K{\"o}the ring, strongly left co-K{\"o}the ring and very strongly  left co-K{\"o}the ring by replacing the top with the socle in Definitions 1.4 as follows:   

 \begin{Def}
{\rm We say that a ring $R$  is a:\\
{\it - left}  (resp., {\it right})  {\it co-K{\"o}the ring}    if every nonzero left   (resp.,  right)  $R$-module is a  direct sum of    modules with nonzero cyclic socle.\\
 {\it - strongly left}  (resp., {\it right})  {\it co-K{\"o}the ring}    if every nonzero left (resp.,  right)   $R$-module is a  direct sum of  modules  with nonzero  square-free  socle. \\
 {\it - very  strongly  left}   (resp., {\it right}) {\it  co-K{\"o}the ring}  if every nonzero left  (resp.,  right)  $R$-module is a  direct sum of   modules with  simple socle. \\
 {\it  - co-K{\"o}the ring}  if $R$ is both  a  left and  a  right  co-K{\"o}the ring.\\
 {\it - strongly  co-K{\"o}the ring}  if $R$ is both  a  strongly left and  a  strongly right  co-K{\"o}the ring.\\
 {\it - very strongly  co-K{\"o}the ring}  if $R$ is both  a   very  strongly left and right co-K{\"o}the  ring.}
\end{Def}
The main goal of this paper is to present several properties and  structural characterizations for each of the above concepts. Summing them all up leads to solving the Morita dual of K{\"o}the's problem.
 
This paper is organized as follows. In Section $2$, we give some preliminaries and propositions that are needed in the following sections. In Section $3$, we provide a new characterization  of left pure semisimple rings by using the socle of finitely generated modules (see Theorem \ref{left pure semisimple}). Also,   we give several characterizations of left co-K{\"o}the rings (see Theorem \ref{left co-Kothe}). In Section $4$, we present various structural characterizations of strongly left co-K{\"o}the ring, and we show that such rings coincide with rings on which every left module is a direct sum of square-free modules (see Theorem \ref{Theorem}). Also, we show that for a quasi-duo ring  $R$ the concepts of “strongly left co-K{\"o}the” and “left co-K{\"o}the” are the same (see Theorem \ref{left quasi duo left co-kothe}) which concludes in Corollary \ref{S is strongly right Kothe}  as a generalization of Ringel's Theorem \cite[Theorem 1.6]{Ringel}. In Section $5$,  we give various structural characterizations of  very strongly left co-K{\"o}the rings. Among other characterizations, we show that a ring $R$ is a very strongly left co-K{\"o}the ring if and only if every left $R$-module is a direct sum of extending modules, if and only if, every left $R$-module is a direct sum of uniform modules. Moreover, we show that any very strongly left co-K{\"o}the ring $R$ is an Artinian left serial ring (see Theorem \ref{very strongly left co-Kothe}). Also, we give some result about  K{\"o}the and co-K{\"o}the rings. In Section 6, some relevant examples and counterexamples are included to illustrate our results.
 \section{\bf Some preliminaries  and related background  }
 
 The module $M$ is called {\it distributive} if the lattice of all its submodules is distributive; i.e., $A \cap (B + C) = (A \cap B) + (A \cap C)$ for all submodules $A$, $B$, and $C$ of $M$. Also, $M$ is called semi-distributive whenever $M$ is a direct sum of distributive modules. For a detailed account of such modules,  see \cite{tuganbaev}. Obviously, uniform modules are square-free, and by a result of Stephenson \cite[Corollary 1(i)$'$ of Proposition 1.1]{Stephenson}, distributive modules are also square-free.
  By \cite[Proposition 1.2]{Fuller1},  a semisimple module  $M$ is square-free if and only if it is distributive if and only if it is zero or a direct sum of non-isomorphic simple modules. Thus,  we have the following. 
  
 \begin{Lem}\label{square-free}
Let $M$ be a left $R$-module. If $M$ is  Artinian or $R$ is left Artinian,  then the following statements are equivalent:\\
\indent {{\rm (1)}} $M$ is a square-free module.\\
\indent {{\rm (2)}} $soc(M)$ is a  square-free module.\\
\indent {{\rm (3)}} $soc(M)$ is a   distributive module.\\
\indent {{\rm (4)}} $soc(M)$ is a direct sum of non-isomorphic simple modules.\\
\indent {{\rm (5)}} Composition factors of any finitely generated submodule of  $soc(M)$ are pairwise \indent\indent non-isomorphic.
\end{Lem}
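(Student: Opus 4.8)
The plan is to dispose first of statements (2)--(5), which concern only the semisimple module $soc(M)$ and require neither hypothesis, and then to connect (1) to them, which is where the Artinian condition does the work. The starting observation is that $soc(M)$ is semisimple, so the equivalence of (2), (3) and (4) is exactly the quoted result of Fuller applied to the module $soc(M)$: for a semisimple module, square-free $\Leftrightarrow$ distributive $\Leftrightarrow$ a direct sum of pairwise non-isomorphic simple modules. To fold in (5), I would use that every finitely generated submodule $N$ of the semisimple module $soc(M)$ is a finite direct sum of simple modules, whose composition factors are precisely those simple summands. Thus $N$ has two isomorphic composition factors exactly when $N$ contains a copy of $S\oplus S$ for some simple $S$; hence (5) fails precisely when $soc(M)$ contains such an $S\oplus S$, i.e. precisely when $soc(M)$ is not square-free. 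This gives (2) $\Leftrightarrow$ (5) and closes the circle among (2)--(5).

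It then remains to prove (1) $\Leftrightarrow$ (2). The implication (1) $\Rightarrow$ (2) is immediate, since any submodule of a square-free module is square-free and $soc(M)$ is a submodule of $M$. The substance is (2) $\Rightarrow$ (1), and this is the step where I expect the main work and where the hypothesis ``$M$ Artinian or $R$ left Artinian'' is actually consumed. The key auxiliary fact I would establish is that under either hypothesis every nonzero submodule of $M$ has nonzero socle (equivalently, $soc(M)$ is essential in $M$): if $M$ is Artinian, every nonzero submodule is Artinian and so contains a simple submodule; if $R$ is left Artinian, then every nonzero cyclic submodule is a finitely generated module over a left Artinian ring, hence Artinian, and again contains a simple submodule.

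Granting this, I would argue by contradiction: suppose $soc(M)$ is square-free but $M$ is not, so that $M$ contains a direct sum $A\oplus B$ with $0\neq A\cong B$. Choosing a simple submodule $S\subseteq soc(A)$, which is nonzero by the auxiliary fact, and applying the isomorphism $A\cong B$ to $S$ produces a simple submodule $S'\subseteq soc(B)$ with $S'\cong S$; moreover $S\oplus S'$ is direct and lies in $soc(M)$ because $A\cap B=0$. This contradicts the square-freeness of $soc(M)$, proving (2) $\Rightarrow$ (1). The only delicate point in the whole argument is the uniform treatment of the two hypotheses in this auxiliary fact; once ``every nonzero submodule has nonzero socle'' is secured, the remainder is formal, resting on Fuller's theorem and the elementary structure of finite direct sums of simple modules.
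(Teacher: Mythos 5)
Your proposal is correct and follows essentially the same route as the paper: (2)$\Leftrightarrow$(3)$\Leftrightarrow$(4) via Fuller's result on semisimple modules, the equivalence with (5) as an elementary observation, and (2)$\Rightarrow$(1) by the same contrapositive argument of locating isomorphic simple submodules inside the two isomorphic summands. Your version is in fact slightly more careful than the paper's, since you make explicit where the Artinian hypothesis is consumed (ensuring every nonzero submodule has nonzero socle), a point the paper's proof passes over silently.
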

\begin{proof} 
(1) $\Rightarrow$ (2),  (4) $\Leftrightarrow$ (5) are clear, and (2) $\Leftrightarrow$ (3) $\Leftrightarrow$ (4) is~by~\cite[Proposition 1.2]{Fuller1}.\\
(2) $\Rightarrow$ (1).  Assume that $M$ is a square-free module.  If $0 \neq A \oplus B \subseteq M$ and $A \cong B$,  then we conclude that there are simple submodules $S_{1} \subseteq A$ and $S_{2} \subseteq B$ with $S_{1} \cong S_{2}$. It follows that    $S_1 \oplus S_2 \subseteq soc(M)$, a contradiction. 
  \end{proof}

Recall that  a ring $R$ is {\it semi-perfect}  if  $R/J$  is left semisimple and idempotents in $R/J$  can be
lifted to $R$.  So,  for example, local rings are semi-perfect. From \cite[(15.16), (15.19) and (27.1)]{Anderson-Fuller}
 it follows that a left (or right) Artinian ring is semi-perfect. It is worthy of note that in a semi-perfect ring,  the radical is the unique largest
ideal containing no nonzero idempotents (see  \cite[(15.12)]{Anderson-Fuller}).

Recall that if $R$  is a semi-perfect ring,  

A semi-perfect ring $R$ can be written as a direct sum of  indecomposable cyclic left $R$-modules,
written as $R= {(Re_1)}^{(t_1)}  \oplus\cdots\oplus {(Re_n)}^{(t_n)}$, where  $(Re_i)^{(t_i)}$ denotes the direct sum of $t_i$ copies of $Re_i$. 
 If $A= Re_1  \oplus\cdots\oplus Re_n$   then
the category of right $R$-modules and right $A$-modules are Morita equivalent. The
semi-perfect ring $R$ is said to be {\it basic}  if $t_1 =  \cdots = t_n = 1$, i.e., there are no isomorphic modules in any decomposition of the ring $R$ into a direct sum of indecomposable right $R$-modules. In fact, a semi-perfect ring $R$  is  basic if the quotient ring $R/J$ is a direct sum of division rings.

Also, a ring $R$ is left (right) {\it perfect} in case each of its left (right) modules has a projective cover. It follows from \cite[Proposition 27.6]{Anderson-Fuller} that left perfect rings and right
perfect rings are both semi perfect. However, left  perfect
rings need not be right perfect  (see \cite[Exercise (28.2)]{Anderson-Fuller}). 
The pioneering work on perfect rings was carried out by H. Bass in 1960 and most of the main characterizations of these rings are contained in his celebrated paper \cite{Bass}.
 
Recall that an idempotent $e\in R$  is {\it primitive} in case it is nonzero and
cannot be written as a sum $e = e^\prime + e^{\prime\prime}$ of nonzero orthogonal idempotents.
A left (right) ideal of $R$ is primitive in case it is of the form $Re$ ($eR$) for some
primitive idempotent $e \in R$. The endomorphism ring of $Re$ is isomorphic to $eRe$.

We recall that a set $\{e_1, \cdots , e_m\}$ of idempotents of a semiperfect ring $R$ is called {\it basic} in case they are pairwise orthogonal, $Re_i \ncong Re_j$ for each $i \neq j$ and for each finitely
generated indecomposable projective left $R$-module $P$, there exists $i$ such that $P \cong Re_i$.
Clearly, the cardinal of any two basic sets of idempotents of a semiperfect ring $R$ are equal. An idempotent $e$ of a semi-perfect ring $R$ is called a {\it basic idempotent} of $R$ in case $e$ is the sum $e = e_1 + \cdots + e_m$ of a basic set $e_1,\cdots, e_m$ of primitive idempotents of $R$. 
        
 Let $M$,  $N$ be $R$-modules.  A monomorphism   $ N \rightarrow M$   is called an  {\it embedding}  of  $N$  to  $M$,   and  we denote  it by $N\hookrightarrow M$).
 
We need the following proposition, which will use it in future sections
\begin{Pro}\label{Projective cover}
Let $R$ be a semi-perfect ring and $M$ be a  finitely generated left $R$-module with the projective cover $P(M)$. Then\\
 {\rm (a)} $R\cong \oplus_{i=1}^n(Re_i)^{(t_i)}$,  where $n,~t_i\in\Bbb{N}$ and $\{e_1,\cdots, e_n\}$ is a basic  set of idempotents     of $R,$ \indent  $R/J \cong \oplus_{i=1}^n (Re_i/Je_i)^{(t_i)}$, 
 $top(M)\cong \oplus_{i=1}^n (Re_{i}/Je_i)^{(s_i)}$,     for  some  
 $s_1,\cdots,s_n\in\Bbb{N}\cup \{0\}$,   \indent and 
  $P(M) \cong \oplus_{i=1}^n(Re_i)^{(s_i)}$. Consequently, \\
  {\rm (b)}   $soc(M) \cong (Re_1/Je_1)^{(u_1)}\oplus\cdots\oplus (Re_n/Je_n)^{(u_n)},$   for some
 $u_i\in\Bbb{N}\cup \{0\}$ where    $i= \indent 1,\cdots, n$.\\
  \indent In particular,  $soc(M)$  is cyclic (resp., square-free)  if and  only if $u_i\leq t_i$  ${(resp., u_i\in\{0,1\})}$  \indent for all $i=1,\cdots, n.$\\
 {\rm (c)}   If $soc(M)$ is  square-free, then $soc(M)$  is   cyclic. 
 \end{Pro}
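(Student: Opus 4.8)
The plan is to obtain the conclusion directly from the numerical criteria already established in parts (a) and (b), since (c) is really a piece of bookkeeping rather than a fresh computation. By part (b) I may write $soc(M) \cong \bigoplus_{i=1}^n (Re_i/Je_i)^{(u_i)}$ with each $u_i \in \mathbb{N} \cup \{0\}$, and the same part hands me exactly the two criteria I need: $soc(M)$ is square-free precisely when $u_i \in \{0,1\}$ for every $i$, while $soc(M)$ is cyclic precisely when $u_i \le t_i$ for every $i$, the $t_i$ being the multiplicities in the decomposition $R \cong \bigoplus_{i=1}^n (Re_i)^{(t_i)}$ from part (a). So the whole content reduces to comparing the two conditions $u_i \le 1$ and $u_i \le t_i$.

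The one structural point I would make explicit is that $t_i \ge 1$ for each $i$. This is forced by the hypothesis that $\{e_1,\dots,e_n\}$ is a \emph{basic} set of idempotents: by definition every finitely generated indecomposable projective left $R$-module is isomorphic to some $Re_i$, so each $Re_i$ genuinely appears in the indecomposable decomposition of $_RR$, which means $t_i \ge 1$. (Indeed, writing $t_i \in \mathbb{N}$ in the statement of (a) already encodes $t_i \ge 1$.) With this observed, the argument is immediate: if $soc(M)$ is square-free then the first criterion gives $u_i \le 1$ for all $i$, and combining with $t_i \ge 1$ yields $u_i \le 1 \le t_i$ for all $i$, which is exactly the second criterion. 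Hence $soc(M)$ is cyclic.

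I do not expect a genuine obstacle here, as all the substantive work lives in parts (a) and (b); the only thing to guard against is a degenerate reading in which some $t_i$ could be $0$, and the basic-set hypothesis rules that out. Conceptually, (c) simply records that over a semi-perfect ring square-freeness of the socle is the stronger of the two conditions, so it subsumes cyclicity of the socle.
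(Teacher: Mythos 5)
Your argument for part (c) is correct and is precisely the route the paper takes: the paper's entire proof of (c) is the line ``It is clear by Part (b)'', and your observation that square-freeness forces $u_i \le 1 \le t_i$ (using $t_i \ge 1$, which the basic-set hypothesis guarantees) is exactly the detail being left implicit there. Be aware, though, that the statement comprises all of (a)--(c), and your proposal takes (a) and (b) as already established, whereas the paper proves (a) by citation to Anderson--Fuller and obtains the cyclicity criterion in (b) by noting that $J\cdot soc(M)=0$, so a cyclic socle embeds in $R/J$, which is what yields the condition $u_i \le t_i$ you rely on.
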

\begin{proof}  The fact (a) is by \cite[Corollary 15.18, Theorems 27.6,  27.13   and Proposition 27.10]{Anderson-Fuller}.

 (b). The proof of the first statement  is obtained by \cite[Proposition 27.10]{Anderson-Fuller}, since $soc(M)$ is the  sum of simple modules.\\
  ($\Rightarrow$). It is well known that, $J(R) soc(M) =0$.  If $soc(M)$ is cyclic, then $soc(M)$ isomorphic to a summand of $R/J(R)$ and hence it can be embedded in $R/J(R)$. So,  $u_i\leq t_i$ for all $i=1,\cdots, n.$\\
  ($\Leftarrow$). If $u_i\leq t_i$ for all $i=1,\cdots, n$, then $soc(M)$  is isomorphic to a direct summand of $R/J$ and hence $soc(M)$ is a cyclic  module.
   
  (c). It is clear by Part (b).
  \end{proof}

In the sequel,  we denote by  $R$-Mod (resp., $R$-mod)  the category of  (resp., finitely generated)  left $R$-modules, and  by  Mod-$R$ (resp., mod-$R$)  the category of  (resp., finitely generated) right $R$-modules.
For an $R$-module $M$,  $E(M)$  denotes the {\it injective hull}  of $M$.
 
In the continuation of this section, we will explain some contents of Wisbauer's book \cite{Wisbauer} regarding the functors rings  of the
finitely generated modules of  $R$-Mod, which we will use extensively in the next sections.  
  
Let $\cal{U}$ be a non-empty set (class) of objects in a
category $\cal{C}$. An object $A$ in $\cal{C}$ is said to be {\it generated by}  $\cal{U}$ or {\it $\cal{U}$-generated}  if,
for every pair of distinct morphisms $f,~g : A \rightarrow B$  in $\cal{C}$, there is a morphism
$h : U \rightarrow A$ with $U\in \cal{U}$  and $hf \neq  hg$. In this case,  $\cal{U}$ is called a {\it set (class) of
generators for}  $A$.  Let $M$ be an $R$-module.  We  recall that   an $R$-module $N$ is
{\it subgenerated by $M$}, or that $M$ is a {\it subgenerator}  for $N$, if $N$ is isomorphic to
a submodule of an $M$-generated module. We denote by $\sigma[M]$ the full subcategory of $R$-Mod whose objects are
all $R$-modules subgenerated by $M$ (which is  the ``smallest"  subcategory of $R$-Mod which contains $M$ and is
a {\it Grothendieck category}).

Let $\{V_\alpha\}_A$  be a family of finitely generated $R$-modules and $V = \bigoplus_AV_\alpha$.
For any $N\in R$-Mod we define:
$$\widehat{H}om(V, N)=\{f\in Hom(V, N) ~|~ f(V_\alpha) = 0~{\rm for~ almost~ all~} \alpha\in A\}.$$
For $N=V$ , we write $\widehat{H}om(V, V )= \widehat{E}nd(V)$. Note that these constructions
do not depend on the decomposition of $V$ (see \cite[Chap. 10, Sec. 51]{Wisbauer} for more details).

For a left $R$-module $M$, let $\{U_\alpha\}_A$ be a representing set of the finitely
generated modules in $\sigma[M]$. We define $U=\bigoplus_AU_\alpha$, $T = \widehat{E}nd(U)$ and call $T$ the {\it functor ring of the
finitely generated modules of $\sigma[M]$}. Then by  \cite[Chap. 10, Sec. 52]{Wisbauer}, for any  module $M$,   $U$ is a generator in $\sigma[M]$ and  by  \cite[Page 507, Part (10) ]{Wisbauer}, 
the functor $\widehat{H}om(U, -)$  is an equivalence between the subcategories of the direct
summands of direct sums of finitely generated modules in $\sigma[M]$ and the
projective modules in $T$-Mod. 

\begin{Rem}\label{progenerator}
{\rm Let $M$ be a left $R$-module. If   $P$ is a finitely generated  projective generator in $\sigma[M]$ (i.e., $P$ is a progenerator in $\sigma[M]$), then by  \cite[Proposition 46.4]{Wisbauer}, 
 $Hom(P, -) : \sigma[M] \rightarrow End(P )$-Mod is an equivalence,  and so by \cite[Chap. 10, Sec. 52]{Wisbauer},  we have $End(U)\cong End(Hom(P, U))$ and
$$T = \widehat{E}nd (U)\cong \widehat{E}nd(Hom(P, U)) \cong \widehat{E}nd (\bigoplus Hom(P, U_\alpha)).$$
Since $\{Hom_R(P, U_\alpha)\}_A$  is a representing set of the finitely generated
$End(P)$-modules, in this case,  $\sigma[M]$ and $End(P)$-Mod have isomorphic
functor rings.}
\end{Rem}

Thus,  we conclude that  if $M=R$,  and $\{U_\alpha\}_A$ is  a representing set of the finitely generated modules in $R$-Mod,  then always $U=\bigoplus_AU_\alpha$ is a generator in $R$-Mod.
 Moreover,  if  $R$ is a  left Artinian ring,  then   every finitely generated left $R$-module  has finite length and hence is a direct sum of indecomposable modules.
So,  we have $U = \bigoplus_A U_\alpha=\bigoplus_\Lambda V_\lambda$,  where  $V_\lambda$ is a finitely generated indecomposable module. Since
$T$ is independent of the decomposition chosen for $U$, $T = \widehat{E}nd(\bigoplus_A U_\alpha) =\widehat{E}nd(\bigoplus_\Lambda V_\lambda)$ (see also  [58, Page 533, the proof  of (b)--(f) ]).
    So,  we have the following:

\begin{Pro}\label{left Artinian}
Let  $R$ be a left Artinian ring, $\{V_\alpha\}_A$ be  a complete set of representatives set of all finitely generated (indecomposable) left $R$-modules, $V=\bigoplus_AV_\alpha$  and $T=\widehat{E}nd(V)$.~Then\\
\indent  {\rm (a)}  $V$  is a generator in $R$-Mod.\\
\indent  {\rm (b)}  $\widehat{H}om (V, -)$ preserves essential extensions.\\
 \indent  {\rm (c)}  The functor ring  $T$ is semi-perfect.\\
  \indent  {\rm (d)}  $V_T$ is finitely generated projective in Mod-$T$ and Mod-$T = \sigma[V_T]$ (i.e., $V_T$ is a \indent\indent  progenerator in $\sigma[V_T]$).\\
 \indent  {\rm (e)}  $End(V_T )\cong End(V_{T^{\prime}})\cong R$, where   $T^{\prime}=End(V)$.\\
 \indent  {\rm (f)}  If $M$ is a simple left $R$-module,  then $\widehat{H}om(V, M)$ has a simple essential socle. \\
  \indent  {\rm (g)}   A finitely generated (indecomposable) left $R$-module $M$  has a square-free  (essential)   \indent\indent socle if and only if  $\widehat{H}om(V, M)$     has a square-free (essential)  socle.\\
  \indent  {\rm (h)}   A  left $R$-module $M$  has a simple (essential)  socle if and only if  $\widehat{H}om(V, M)$  has  \indent\indent  a simple  (essential) socle.\\
  \indent  {\rm (i)}  If $R$ is a left pure semisimple  ring,   then 
      $\widehat{H}om(V, -)$  is an equivalence between \indent\indent  the full categories of left  $R$-modules  and the
projective modules in $T$-Mod.   
    \end{Pro}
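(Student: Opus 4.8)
The plan is to specialize the functor-ring machinery recalled just above to the case $M = R$, so that $\sigma[R]$ is all of $R$-Mod and the module $U$ assembled from a representing set of the finitely generated modules agrees, up to reindexing of its summands, with $V = \bigoplus_A V_\alpha$. The engine throughout is the stated equivalence: $\widehat{H}om(V, -)$ is an equivalence between the direct summands of direct sums of finitely generated left $R$-modules and the projective modules in $T$-Mod. Parts (a)--(e) are the structural payoff of this set-up together with left Artinianness, and I would obtain them mostly by citing \cite{Wisbauer}; the substantive content lies in (f)--(h), where socle data must be transported across the functor, while (i) is a short upgrade once (a)--(h) are in hand.

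For (a), left Artinianness makes $R$ a finite direct sum of indecomposable projective left modules, each isomorphic to some $V_\alpha$; hence $R$ is a direct summand of a finite direct sum of the modules $V_\alpha$, so $R$ is $V$-generated and $V$ is a generator in $R$-Mod. For (c), every finitely generated left module now has finite length, and the functor ring of the finitely generated modules of such a category is semi-perfect by \cite[Chap.\ 10]{Wisbauer}. Parts (d) and (e) are the standard progenerator and double-centralizer statements for a functor ring: $V_T$ is finitely generated projective with $\sigma[V_T]$ all of Mod-$T$, and because $V$ is a generator the canonical maps $R \to End(V_{T'})$ and $R \to End(V_T)$ are isomorphisms (Remark \ref{progenerator} records the comparison of $T$ with $End(V)$). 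For (b), I would use that $\widehat{H}om(V, -)$ is left exact, so an essential inclusion $N \hookrightarrow M$ maps to an inclusion $\widehat{H}om(V, N) \hookrightarrow \widehat{H}om(V, M)$; to see this is essential I would take a nonzero $\phi$ in a given $T$-submodule, note that $\phi$ factors through finitely many $V_\alpha$ and so has a nonzero finitely generated image meeting $N$, and then pull this back through the generator property of $V$ to produce a nonzero element of the intersection.

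The core is (f)--(h). For (f), a simple module $M$ is one of the $V_\alpha$, so $\widehat{H}om(V, M)$ is the associated indecomposable projective $T$-module, which is local because $T$ is semi-perfect; I would show its socle is simple and essential, transferring the Schur property that $End(M)$ is a division ring through the functor-ring structure. For (g), I would first use Lemma \ref{square-free} to reduce square-freeness of $M$ to square-freeness of $soc(M)$, decompose $soc(M)$ into simples, and apply the additive functor summand-by-summand via (f), the essential refinement coming from (b). The crucial step common to (g) and (h) is to show that $soc_T(\widehat{H}om(V, M))$ faithfully records the simple constituents of $soc_R(M)$ — that simple $T$-submodules of the image are detected on $soc(M)$ and that the multiplicity and isomorphism pattern is preserved — which I would pin down using (f) together with the semi-perfect description of socles of finitely generated $T$-modules in Proposition \ref{Projective cover}. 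Part (h) is then the simple-socle case, valid for arbitrary $M$ since $soc(M)$ is already finitely generated.

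Finally, for (i), left pure semisimplicity means every left $R$-module is a direct sum of finitely generated indecomposable modules, hence already lies in the subcategory of direct summands of direct sums of finitely generated modules on which $\widehat{H}om(V, -)$ is an equivalence onto the projectives of $T$-Mod; so the equivalence automatically extends to all of $R$-Mod. I expect the main obstacle to be the essentiality assertions in (b) and (f) together with the socle-compatibility needed in (g)--(h): preservation of essentiality is not purely formal and must be argued by reduction to finitely generated test modules through the generator, and computing the socle of the local projective $\widehat{H}om(V, M)$ — showing it is simple, essential, and faithfully tracks $soc_R(M)$ — is where the fine structure of the semi-perfect ring $T$ genuinely enters.
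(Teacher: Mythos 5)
Your proposal follows essentially the same route as the paper: parts (a)--(f) are discharged by quoting Wisbauer's functor-ring results (Propositions 51.7 and 51.8), (g) and (h) are obtained by writing $soc(M)=S_1\oplus\cdots\oplus S_n\leq_e M$ and transporting this decomposition through (b) and (f), and (i) follows because pure semisimplicity gives every module an indecomposable decomposition. The only divergence is that where you sketch a direct argument for (f), the paper simply cites \cite[Proposition 51.8]{Wisbauer}, which is also the cleanest way to settle the simple-essential-socle claim you correctly identify as the crux.
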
  
 \begin{proof}  (a) is by \cite[Chap. 10, Sec. 52]{Wisbauer}, (b)  and (c) are  optioned by \cite[Proposition 51.7) ]{Wisbauer} and (d), (e)   and (f) are optioned  by  \cite[Proposition 51.8) ]{Wisbauer}
  
 (g). Since $R$ is  left Artinian,   for  every  finitely generated (indecomposable) left $R$-module $M$,  $soc(M)=S_1\oplus\cdots\oplus S_n\leq_e M$, where  $S_i^,s$ are simple. By (f)   each $\widehat{H}om_R(V,S_i)$   has  a simple  essential socle and by (b), 
    $$\bigoplus_{i=1}^n \widehat{H}om_R(V,S_i)= \widehat{H}om_R(V,soc(M))\leq_e \widehat{H}om_R(V,M)\leq_e \bigoplus_{i=1}^n \widehat{H}om_R(V,E(S_i)).$$  
  It follows that $soc(\widehat{H}om_R(V,M))= soc(\bigoplus_{1}^n \widehat{H}om_R(V,S_i))$ and is clear that   $S_i\ncong S_j$ if and only if $\widehat{H}om_R(V,S_i)\ncong   \widehat{H}om_R(V,S_j)$, if and only if, 
$soc(\widehat{H}om_R(V,S_i))\ncong   soc(\widehat{H}om_R(V,S_j))$.  Thus $M$   has a square-free  (essential)  socle if and only if  $\widehat{H}om(V, M)$     has a square-free (essential) socle.

  (h) is by the proof of (g) for $n=1$.
 
(i) By \cite[Theorem 4.4]{chase}, any left pure semisimple ring $R$ is left Artinian. So every left module has an indecomposable decomposition. Then, the proof is complete by \cite[Page 507, Part (10) ]{Wisbauer}.
    \end{proof}

We recall that an injective left $R$-module $Q$  is  a {\it cogenerator} in the category of left $R$-modules  if and only if it
cogenerates every simple left $R$-module, or equivalently, $Q$ contains every simple left $R$-module as a submodule (up to isomorphism) (see \cite[Proposition 16.5]{Wisbauer}.
Also,  a cogenerator  $Q$ is called {\it minimal cogenerator}  if $Q\cong E(\oplus_{i\in I} S_i)$, where  $I$ is an index set and $\{S_i~|~i\in I\}$ is a complete set of representatives of the isomorphism classes of simple left $R$-modules. 
Let $R$ and $S$ be two rings. Then an additive contravariant functor $F:R$-mod$\rightarrow$mod-$S$ is called {\it duality}  if it is an equivalence of categories (see \cite[19, Chap. 9]{Wisbauer}]).

A ring $R$ is called a {\it left Morita ring}  if there is an injective cogenerator $_RU$  in $R$-Mod such that for $S={\rm End}(_RU)$, the module  $U_S$ is an injective cogenerator in Mod-$S$ and $R\cong {\rm End}(U_S)$.

\begin{Rem}\label{finite representation type} 
{\rm Let $R$ be  a ring of  finite representation type,  $V:=V_1\oplus\cdots\oplus V_n $,  where   $\{V_1,\cdots,V_n\}$  is a complete set of representatives of the isomorphism classes of finitely
generated indecomposable left $R$-modules. We set   $Q= \bigoplus_{i=1}^mE(S_i)$, where  $\{S_i~|1\leq i\leq m\}$ is a complete set of representatives of the isomorphism classes of simple left $R$-modules. Clearly, the functor ring $T=\widehat{E}nd(V)$ is equal to  $ End_R(V)$,  and in this case, the ring  $T:= End_R(V)$ is called the {\it left Auslander ring}  of $R$. By \cite[Proposition 3.6]{Auslander2},  $T$  is an Artinian ring and so soc$(T_T)$ is an essential submodule  of $T_T$ and $T$ contains only finitely many non-isomorphic types of simple modules.  Also  by \cite[Proposition 46.7]{Wisbauer},  the functor  ${\rm Hom}_R(V,-)$  establishes an equivalence between the category of left $R$-modules and the full subcategory of finitely generated  projective  left $T$-modules,  which preserves and reflects finitely generated  left $R$-modules and  finitely generated indecomposable left $R$-modules  correspond to finitely generated indecomposable projective $T$-modules (see \cite[Proposition  51.7,(5)]{Wisbauer}). In fact,  
each finitely generated indecomposable projective $T$-module is  local and hence it  is the projective cover of  a simple $T$-module. Therefore ${\rm Hom}_R(V,-)$ yields a bijection between a minimal representing set of finitely generated, indecomposable left $R$-modules   and the set of projective covers of non-isomorphic simple left $T$-modules.}
\end{Rem}

\begin{Rem}\label{left Morita}
{\rm By the above
considerations,  $S = End_R(Q)$ is a ring   and since $Q$ is finitely generated, the functor ${\rm Hom}_R(-,Q)$ : $R$-Mod$\rightarrow$  Mod-$S$ is a duality 
 and $S$ is a right Artinian ring. By  duality, we see that $\{Hom_R(V_1,Q),\cdots,Hom_R(V_n,Q)\}$ is a complete set of representatives of the isomorphism classes of finitely generated indecomposable right $S$-modules. It follows  that $S$ is an Artinian ring of finite representation type.  Thus,  $T^\prime =End_S(\bigoplus_{i=1}^n Hom_R(V_i,Q))$ is  the   right  Auslander ring of $S$.  By \cite[Proposition 47.15]{Wisbauer},   the  left Auslander ring  $T$ of $R$ is isomorphic to the  right  Auslander ring  $T^\prime$ of  $S$. Thus,  $R$  is left Morita to $S = End_R(Q)$ (see  \cite[Exercises 52.9 (2)]{Wisbauer}).  Note that the ring  $S = End_R(Q)$ is called the {\it left Morita dual ring of $R$} 
 (see  \cite[Proposition 3.2.]{Kado}}
 \end{Rem}
  
\begin{Rem}\label{duality}
{\rm By the above remarks, and by \cite[Proposition 47.3]{Wisbauer},  we see that  the functor ${\rm Hom}_R(-,Q)$ : $R$-Mod$\rightarrow$  Mod-$S$ is a duality with the inverse duality ${\rm Hom}_S(-,Q)$ : Mod-$S\rightarrow  R$-Mod.
 By \cite[Proposition 24.5]{Anderson-Fuller} for each finitely generated left $R$-module $X$, the lattice of all $R$-submodules of $M$ and the  lattice of all $S$-submodules of
  ${\rm Hom}_R(M,Q)$  are anti-isomorphic. Hence,  for each finitely generated left $R$-module $X$, ${\rm soc(Hom}_R(X,Q))\cong {\rm Hom}_R(top(X),Q)$,
    $top(Hom(X,Q)) = \frac{Hom(X,Q)}{Rad(Hom(X,Q))}\cong Hom(soc(X), Q))$ as $S$-modules (see \cite[24. Exercises, Exercises 6]{Anderson-Fuller}). } 
  \end{Rem}

The following summarizes all the facts presented above on rings of finite representation  type.
\begin{Pro}\label{equivalence}
Let $R$ be a ring of finite representation type,  $U:=U_1\oplus\cdots\oplus U_n $,  where   $\{U_1,\ldots,U_n\}$ is   a complete set of representatives of the isomorphism classes of finitely generated indecomposable left $R$-modules,  $Q\cong E(S_1)\oplus\cdots\oplus E(S_m)$, where  $\{S_i~|~1\leq i\leq m\}$ is a complete set of representatives of the isomorphism classes of simple left $R$-modules,  $T:=End_R(U)$ and $S= End_R(Q)$. Then \vspace*{0.2cm} \\
\indent  {\rm (a)}  The functor  ${\rm Hom}_R(U,-)$  establishes an equivalence between the category of left  \indent \indent $R$-modules and the full subcategory of finitely generated  projective  left $T$-modules.  \\
\indent  {\rm (b)}  The functor ${\rm Hom}_R(-,Q)$ : $R$-Mod$\rightarrow$  Mod-$S$ is a duality  with the inverse duality \indent \indent ${\rm Hom}_S(-,Q) : $Mod-$S\rightarrow  R$-Mod. \\
\indent  {\rm (c)}   $S$ is a right Artinian ring and  $\{Hom_R(U_1,Q),\ldots,Hom_R(U_n,Q)\}$ is a complete \indent \indent set of  representatives of the isomorphism classes of finitely generated indecompos-  \indent \indent able right  $S$-modules.  \\
\indent  {\rm (d)} The  left Auslander ring  $T$ of $R$ is isomorphic to the  right  Auslander ring  $T^\prime$ of  $S$ \indent\indent   (thus $R$  is left Morita to $S = End_R(Q)$). \\
\indent  {\rm (e)}  If  $P$  is  an indecomposable projective   left $T$-module, then  $P\cong Hom_R(U,M)$  for  \indent  \indent some indecomposable left  $R$-module $M$.\\
\indent  {\rm (f)}   An indecomposable left $R$-module $M$  has a square-free  socle if and only if   the left  \indent\indent  $T$-module $Hom_R(U,M)$  has a square-free  socle.  \\
\indent  {\rm (g)}   An indecomposable left $R$-module $M$  has a simple socle if and only if the left $T$-  \indent\indent module     $Hom_R(U,M)$  has a simple socle.  \\
\indent  {\rm (h)}  If  $Y$  is a finitely generated (indecomposable)  right $S$-module with square-free    socle,     \indent\indent then  left $R$-module  ${\rm Hom}_S(Y,Q)$  has a square-free  top.  \\
\indent  {\rm (i)}  If  $Y$  is an indecomposable  right $S$-module with simple   socle,   then   the left $R$-module \indent\indent  ${\rm Hom}_S(Y,Q)$  has a simple  top.  \\
\indent  {\rm (j)}  For each finitely generated  left $R$-module $X$, as $S$-modules,  we have:
 $$soc(Hom_R(X,Q))\cong  Hom_R(top(X),Q).$$
\indent  {\rm (k)}  For each finitely generated  left $R$-module $X$, as $S$-modules,  we have:
 $$top(Hom(X,Q)) = \frac{Hom(X,Q)}{Rad(Hom(X,Q))}\cong Hom(soc(X), Q)).$$
\end{Pro}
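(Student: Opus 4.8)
The plan is to assemble this proposition almost entirely from the apparatus built in Remarks \ref{finite representation type}, \ref{left Morita} and \ref{duality}, together with Proposition \ref{left Artinian}. The one preliminary observation that makes everything fit is the identification recorded in Remark \ref{finite representation type}: a ring of finite representation type is left Artinian, so Proposition \ref{left Artinian} applies, and there the functor ring $\widehat{E}nd(V)$ collapses to the ordinary left Auslander ring $T = End_R(U)$, since the ``almost all'' condition defining $\widehat{E}nd$ is vacuous on the finite family $U_1,\ldots,U_n$. Thus every clause of Proposition \ref{left Artinian} may be reread with $\widehat{H}om(V,-)$ replaced by $Hom_R(U,-)$, which is precisely the functor appearing in (a), (f) and (g).

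With this bridge in place, parts (a)--(d) are direct citations. Part (a) is the equivalence of Remark \ref{finite representation type} (via \cite[Proposition 46.7]{Wisbauer}); part (b) is the duality together with its stated inverse from Remark \ref{duality}; part (c) combines the right-Artinian conclusion and the explicit list of indecomposable right $S$-modules from Remark \ref{left Morita}; and part (d) is the isomorphism $T \cong T^\prime$ of left and right Auslander rings from Remark \ref{left Morita} (via \cite[Proposition 47.15]{Wisbauer}). For (e) I would note that $T$ is semi-perfect (Proposition \ref{left Artinian}(c)), so any indecomposable projective left $T$-module is finitely generated and therefore lies in the image of the equivalence of (a); since that equivalence matches finitely generated indecomposable left $R$-modules with finitely generated indecomposable projective $T$-modules (Remark \ref{finite representation type}), such a projective is $Hom_R(U,M)$ for an indecomposable $M$.

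Parts (f) and (g) are then immediate restatements of Proposition \ref{left Artinian}(g) and (h) under the identification $\widehat{H}om(V,-) = Hom_R(U,-)$, and parts (j) and (k) are exactly the socle--top and top--socle correspondences already derived in Remark \ref{duality} from the submodule-lattice anti-isomorphism of \cite[Proposition 24.5]{Anderson-Fuller}.

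The only clauses needing a genuine, if short, argument are (h) and (i), and this is where I expect the (modest) main obstacle: one must transport a condition on the socle on the $S$-side to a condition on the top on the $R$-side. The mechanism is the $S$-side analogue of (k), namely $top(Hom_S(Y,Q)) \cong Hom_S(soc(Y),Q)$ as left $R$-modules, which the inverse duality $Hom_S(-,Q)$ supplies by the same lattice anti-isomorphism. Since a duality carries simple modules to simple modules and preserves non-isomorphism, a square-free socle $soc(Y)$ --- a direct sum of pairwise non-isomorphic simples by Lemma \ref{square-free} --- is sent to a direct sum of pairwise non-isomorphic simple left $R$-modules, which is again square-free; this gives (h), and specializing to a simple socle gives (i) as the single-summand case. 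The care required is only in verifying that ``square-free'' and ``simple'' are socle-detectable (Lemma \ref{square-free}) and are preserved by the duality, which the lattice anti-isomorphism guarantees.
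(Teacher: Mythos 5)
Your proposal is correct and follows essentially the same route as the paper, which proves the proposition simply by citing the preliminary results (the paper attributes (a)--(g) to Proposition \ref{Projective cover}, apparently a slip for Proposition \ref{left Artinian} and Remarks \ref{finite representation type}--\ref{duality}, which is exactly the material you invoke). Your added detail for (e), (h) and (i) --- semi-perfectness of $T$ and the transport of square-free/simple socles through the duality via the lattice anti-isomorphism --- only makes explicit what the paper leaves implicit.
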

\begin{proof}  The facts (a), (b), (c), (d), (e), (f)  and (g)  are optioned  by  Proposition \ref{Projective cover}, and   the facts (h), (i), (j), (k) are optioned  by Remarks \ref{finite representation type},   \ref{left Morita}  and \ref{duality}.
\end{proof}

 
\section{\bf  Characterizations of left co-K{\"o}the rings} 

  Let $M$ be an $R$-module.  We recall that  a submodule $N$  of $M$ is called a {\it $RD$-pure submodule}  of $M$ if  $N\cap rM=rN$ for all
$r\in R$. It  is trivial to verify that every direct summand of $M$ is a $RD$-pure submodule of $M$ (we note that in \cite{chase}, Chase used the term ``pure submodule" insead of ``$RD$-purity"). 

The following two lemmas  play  an important role in this paper.

\begin{Lem}\label{chase}\textup {(Chase \cite[Theorem 3.1]{chase})}
Let $R$ be a ring, and $A$  an infinite set of cardinality $\zeta$ where
$\zeta\geq card(R)$. Set $M =\prod_{\alpha\in A}R^{(\alpha)}$, where $R^{(\alpha)}\cong R$ is a left $R$-module. Suppose
that $M$ is an $RD$-pure submodule of a left $R$-module of the form $N=\bigoplus_\beta N_\beta$, where
each $N_\beta$ is generated by a subset of cardinality less than or equal to $\zeta$. Then $R$
must satisfy the descending chain condition (DCC) on principal right ideals.
 \end{Lem}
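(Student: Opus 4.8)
The plan is to argue the contrapositive: assuming $R$ fails the DCC on principal right ideals, I will contradict the existence of an $RD$-pure embedding of $M=\prod_{\alpha\in A}R^{(\alpha)}$ into a direct sum $N=\bigoplus_\beta N_\beta$ of $\le\zeta$-generated modules. Failure of the DCC produces a strictly descending chain of principal right ideals, which I record as elements $a_1,a_2,\dots\in R$ together with the partial products $b_n:=a_1a_2\cdots a_n$ (with $b_0:=1$), so that $b_nR\supsetneq b_{n+1}R$ for every $n$. The arithmetic I will use repeatedly is that $b_j\in b_kR$ precisely when $j\ge k$; in particular each quotient $b_kR/b_{k+1}R$ is nonzero, and after re-indexing I may assume $b_kR\subsetneq R$ for all $k\ge1$.

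Next I translate $RD$-purity into two workable facts. First, since $M$ is literally a product, $m\in b_kM$ holds exactly when every coordinate $m_\alpha$ lies in the right ideal $b_kR$; and $RD$-purity gives $b_kM=M\cap b_kN$ for each $k$, so an element of $M$ is divisible by $b_k$ inside the big module $N$ if and only if all of its product-coordinates already lie in $b_kR$. Intersecting over $k$ yields $M\cap\bigcap_k b_kN=\prod_\alpha\!\big(\bigcap_k b_kR\big)$. Second, because $N$ is a direct sum, every element of $N$ has finite support, so the image of any single $m\in M$ lies in $N_F:=\bigoplus_{\beta\in F}N_\beta$ for some finite $F$; being a finite union of $\le\zeta$-generated modules, $N_F$ is itself $\le\zeta$-generated, and hence (as $\zeta\ge\mathrm{card}(R)$) has at most $\zeta$ elements.

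With these tools the strategy is a counting argument driven by the cardinality gap $\mathrm{card}(M)=\mathrm{card}(R)^{\zeta}=2^{\zeta}>\zeta$. I will exhibit a family of $2^{\zeta}$ elements of $M$, built from the products $b_n$ distributed along the $\zeta$ coordinates of $A$, and use the chain together with purity to pin each one down. Projecting onto the finite (hence $\le\zeta$-generated) support $N_F$ of a suitable reference element $x$, and exploiting the relations $b_ne_n=t^{(n)}-t^{(n+1)}$ among the tails $t^{(n)}=(0,\dots,0,b_n,b_{n+1},\dots)$, one obtains a $b$-adic expansion $x\equiv\sum_{i<k}b_iv_i\pmod{b_kN_F}$ with coefficients $v_i\in N_F$. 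The point is that this expansion takes values in a fixed module of size at most $\zeta$, while the $b_n$ record $2^\zeta$ genuinely distinct divisibility patterns; a pigeonhole should then force two distinct elements of the family to share an image in $N_F$, contradicting injectivity of the embedding and forcing the DCC.

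The hard part is exactly this last step, namely controlling where the images go. A single element is harmlessly confined to finitely many summands, but a priori the $2^\zeta$ test elements could scatter across more than $\zeta$ summands of $N$, in which case no pigeonhole is available and the cardinality gap is wasted. The crux is therefore to use the \emph{strictness} of the descending chain (which keeps the $b_n$ mutually inequivalent modulo the $b_kR$) in tandem with the finiteness of support to \emph{confine} enough of the family to one fixed $\le\zeta$-generated piece $N_F$ while keeping the members pairwise distinguishable there. Managing this confinement, and the accompanying bookkeeping that matches distinct coordinate-patterns in $M$ to distinct residues in $N_F$, is where the real work lies; once a collision is produced the contradiction is immediate.
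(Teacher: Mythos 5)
First, a point of reference: the paper does not prove this lemma at all --- it is quoted verbatim from Chase \cite[Theorem 3.1]{chase} and used as a black box, so there is no internal proof to compare your attempt against. Judged on its own terms, your proposal is not yet a proof, and you say so yourself: everything funnels into the ``confinement'' step, which you explicitly leave open. That step is not bookkeeping; it is the content of the theorem. Your preparatory material is sound and is indeed the standard scaffolding for Chase-type arguments (the partial products $b_n$ with $b_nR\supsetneq b_{n+1}R$, the identity $b_kM=\prod_\alpha b_kR$ for the full product, the translation of $RD$-purity into $b_kM=M\cap b_kN$, the finite-support observation, the bound $\mathrm{card}(N_F)\le\zeta$, and the telescoping relations $b_ne_n=t^{(n)}-t^{(n+1)}$ leading to $x-\sum_{i\le n}b_ie_{\alpha_i}\in b_{n+1}N$). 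But none of this yet touches the strictness of the chain, which is the only thing that can ultimately be contradicted.

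More seriously, the endgame you aim for cannot work even if the confinement were achieved. Since $M$ is a submodule of $N$, the ``embedding'' is an inclusion: two distinct elements of $M$ are automatically distinct elements of $N$, so they can never literally ``share an image,'' and if you instead mean that they have equal projections to $N_F$, that contradicts nothing (the projection restricted to $M$ has no reason to be injective). Put differently: if all $2^{\zeta}$ test elements really landed injectively in a set of size at most $\zeta$, that is absurd on its face, so the only possible conclusion of your pigeonhole is that the confinement fails --- which tells you nothing about DCC. The contradiction in Chase's argument is not with injectivity but with the strict descent: one fixes a single element $x\in M$ whose coordinates run through the $b_n$ over infinitely many indices of $A$, exploits the finite support of $x$ in $N$ together with $RD$-purity (coordinatewise divisibility) and the cardinality hypothesis $\zeta\ge card(R)$ (which is what makes a pigeonhole over the at most $\zeta$ possible projections into $N_F$ available across the $\zeta$ coordinates), and extracts an equation forcing $b_nR=b_{n+1}R$ for some $n$. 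So the target of your pigeonhole needs to be redirected from ``two elements of $M$ collide'' to ``two consecutive principal right ideals coincide''; as written, the proposal identifies the right tools but defers the one step that constitutes the proof and points the final contradiction at the wrong object.
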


\begin{Lem}\label{Zimmermann} \textup {(B. Zimmermann-Huisgen-W. Zimmermann \cite[Page 2]{Zimmermann & Zimmermann})}
For a ring $R$ the following statments are equivalent:  \vspace*{0.2cm} \\
\noindent {{\rm (1)}} Each left $R$-module is a direct sum of finitely generated modules.\\
\noindent {{\rm (2)}} There exists a cardinal number $\aleph$ such that each left  $R$-module is a direct sum of  \indent   ${\aleph}{-\rm generated ~modules.}$\\
\noindent {{\rm (3)}} Each left  $R$-module is a direct sum of indecomposable modules ($R~is~left~pure~semisimple$).
\end{Lem}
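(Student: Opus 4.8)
The plan is to prove the cycle (1) $\Rightarrow$ (2) $\Rightarrow$ (3) $\Rightarrow$ (1). The implication (1) $\Rightarrow$ (2) is immediate: a finitely generated module is generated by a finite, hence countable, set, so (2) holds with $\aleph = \aleph_0$. The substance of the lemma lies in turning a mere cardinal bound into genuine finite generation and indecomposability, and the engine for this is Chase's Lemma \ref{chase}.

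For (2) $\Rightarrow$ (3), I would first extract a chain condition. Set $\zeta = \max(\aleph, \mathrm{card}(R))$, choose an index set $A$ with $\mathrm{card}(A) = \zeta$, and consider the left module $M = \prod_{\alpha \in A} R^{(\alpha)}$. By (2), $M = \bigoplus_\beta N_\beta$ where each $N_\beta$ is $\aleph$-generated, hence generated by at most $\zeta$ elements; since $M$ is (trivially) an $RD$-pure submodule of this very decomposition, Lemma \ref{chase} forces $R$ to satisfy the descending chain condition on principal right ideals. By Bass's theorem \cite{Bass} this means $R$ is left perfect, so in particular every module has a projective cover and direct-sum decompositions admit refinement via the exchange property. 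The second step is then to upgrade the $\aleph$-generated decomposition of an arbitrary module to a decomposition into indecomposables: over a left perfect ring the local-endomorphism and exchange machinery lets one refine each $\aleph$-generated summand, and a descent on cardinality—using the chain condition just obtained to rule out strictly decreasing internal filtrations—shows that no summand of infinite length is actually needed. This yields (3).

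For (3) $\Rightarrow$ (1), I would invoke the fact, already used in Proposition \ref{left Artinian}(i) through \cite[Theorem 4.4]{chase}, that a left pure semisimple ring is left Artinian. Combined with (3), every indecomposable left $R$-module is then finitely generated (indeed of finite length), so the indecomposable decomposition guaranteed by (3) is already a decomposition into finitely generated modules, which is precisely (1).

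The main obstacle is the middle step of (2) $\Rightarrow$ (3): passing from the cardinal bound and the chain condition supplied by Chase's Lemma to an honest indecomposable, finite-length decomposition. The chain condition alone does not immediately eliminate infinitely generated building blocks; one needs the full refinement theory of direct-sum decompositions over perfect rings together with a transfinite descent to show that the $\aleph$-generated summands collapse to finitely generated indecomposables. Verifying that this descent terminates—equivalently, that pure semisimplicity propagates from the finitely generated modules to all modules—is the technical heart of the Zimmermann-Huisgen--Zimmermann result.
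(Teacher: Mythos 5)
The paper gives no proof of this lemma at all: it is imported verbatim from the cited work of B.~Zimmermann-Huisgen and W.~Zimmermann, so there is no internal argument to compare yours against. Judged on its own terms, your attempt correctly identifies the easy implication (1) $\Rightarrow$ (2) and correctly brings Lemma \ref{chase} to bear on (2) $\Rightarrow$ (3): taking $\zeta=\max(\aleph,\mathrm{card}(R))$ and $M=\prod_{\alpha\in A}R^{(\alpha)}$, the hypothesis (2) together with Chase's lemma does yield the DCC on principal right ideals, hence that $R$ is left perfect. But that is where the genuine content of the theorem begins, and your proposal does not supply it.

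There are two concrete gaps. First, in (2) $\Rightarrow$ (3) the passage from ``left perfect with $\aleph$-generated decompositions'' to ``indecomposable decompositions'' is asserted, not proved. Over a left perfect ring arbitrary modules need not decompose into indecomposables, and an $\aleph$-generated summand need not have a semiperfect or exchange endomorphism ring, so there is no ``refinement theory'' available to run your transfinite descent, and no mechanism to make it terminate. The actual engine of the Zimmermann-Huisgen--Zimmermann theorem is that the uniform cardinal bound forces every left $R$-module to be $\Sigma$-pure-injective (equivalently, that the functor ring of $R$-mod is left perfect), from which the decomposition into finitely generated indecomposables with local endomorphism rings follows by Azumaya-type arguments; your sketch presupposes exactly this. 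Second, in (3) $\Rightarrow$ (1) you infer that every indecomposable left $R$-module is finitely generated ``because $R$ is left Artinian.'' That implication is false: left Artinian rings of infinite representation type admit infinitely generated indecomposable modules (Pr\"ufer and generic modules over tame hereditary algebras, for instance). That indecomposables over a left pure semisimple ring have finite length is itself a nontrivial theorem, again obtained through the functor-ring machinery rather than from Artinianness; moreover, invoking \cite[Theorem 4.4]{chase} to get Artinianness from (3) already presupposes control on the size of the indecomposable summands, so the argument as written is circular at that point.
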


The following theorem characterize the rings over which every injective left module is a direct
sum of square-free modules.
 
\begin{The}\label{is left Noetherian} 
The following statements are equivalent for a ring  $R$: \vspace*{0.2cm} \\
\indent {{\rm (1)}} Every injective left $R$-module  is a direct sum  of square-free modules.\\
\indent {{\rm (2)}} Every injective left $R$-module is a direct sum of modules with cyclic socle.\\
\indent {{\rm (3)}} $R$ is left Noetherian.
\end{The}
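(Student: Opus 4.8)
The plan is to prove the three conditions equivalent with (3) as the central condition: I would establish (3)$\Rightarrow$(1) and (3)$\Rightarrow$(2) directly, and then obtain the two converses (1)$\Rightarrow$(3) and (2)$\Rightarrow$(3) by a single reduction to the $\Sigma$-injectivity of the minimal injective cogenerator. Note that (1) and (2) are not comparable in any obvious way: a cyclic socle can fail to be square-free (e.g. $S\oplus S$ over $R=M_2(k)$), while a square-free socle built from infinitely many pairwise non-isomorphic simples need not be cyclic. So a genuine hub-and-spoke argument through (3) seems unavoidable rather than a short implication cycle.

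For (3)$\Rightarrow$(1) and (3)$\Rightarrow$(2): over a left Noetherian ring the classical Matlis--Papp theorem gives that every injective left $R$-module is a direct sum of indecomposable injective (hence uniform) modules. A uniform module is square-free (as already recorded in the excerpt) and has socle either $0$ or simple; in both cases that socle is cyclic. Thus the single uniform decomposition witnesses (1) and (2) simultaneously.

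For the converses I would work with the minimal injective cogenerator $Q\cong E(\bigoplus_i S_i)$ taken over a complete set $\{S_i\}$ of simples, whose key feature is that $soc(Q)=\bigoplus_i S_i$ is \emph{essential} in $Q$. Fix an index set $I$ and put $F=E(Q^{(I)})$; then $soc(F)=soc(Q^{(I)})=\bigoplus_i S_i^{(I)}$ is essential in $F$, so in any decomposition $F=\bigoplus_\lambda M_\lambda$ furnished by (1) or (2) every summand $M_\lambda$ automatically has essential socle, whence $M_\lambda=E(soc(M_\lambda))$. Under (2) each $soc(M_\lambda)$ is cyclic, hence of finite length, so $M_\lambda$ has finite Goldie dimension and is a finite direct sum of copies of various $E(S_i)$; counting socle multiplicities (each $S_i$ occurs $|I|$ times in $soc(F)$) yields $F\cong\bigoplus_i E(S_i)^{(I)}$, while the case $|I|=1$ identifies $Q\cong\bigoplus_i E(S_i)$, so $F\cong Q^{(I)}$. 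Since $F$ is injective, $Q^{(I)}$ is injective for every $I$; thus $Q$ is $\Sigma$-injective, and Faith's theorem on $\Sigma$-injectivity, together with the fact that every left ideal is the annihilator in $R$ of a subset of the cogenerator $Q$, yields the ascending chain condition on left ideals. This settles (2)$\Rightarrow$(3).

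The main obstacle is the analogous finish for (1)$\Rightarrow$(3). Here each summand is only square-free, so $soc(M_\lambda)=\bigoplus_{i\in T_\lambda}S_i$ is a direct sum of pairwise non-isomorphic simples with $T_\lambda$ possibly infinite, and $M_\lambda=E(\bigoplus_{i\in T_\lambda}S_i)$ need \emph{not} split into uniform injectives (over a general ring $\bigoplus_{i\in T_\lambda}E(S_i)$ need not be injective). The multiplicity argument still shows each individual $E(S_i)$ is $\Sigma$-injective, but promoting this to $\Sigma$-injectivity of $Q$ itself is the delicate point: one must exclude square-free injective summands with infinite socle that fail to be direct sums of indecomposables. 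I expect the resolution to require a cardinality/purity argument in the spirit of Chase's Lemma~\ref{chase}: a would-be decomposition of $F$ into infinitely-socled square-free pieces should exhibit a large product $\prod_\alpha R^{(\alpha)}$ as an $RD$-pure submodule of a direct sum of suitably small modules, and the resulting chain condition — reconciled with the essential-socle structure and the strict ascending chain of left ideals produced by a failure of (3) — should deliver the contradiction. Carrying out this last step cleanly is where the real work lies.
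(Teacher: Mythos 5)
Your overall architecture --- treating (3) as the hub and deducing (3)$\Rightarrow$(1),(2) from the Matlis--Papp decomposition of injectives over a left Noetherian ring into uniform pieces --- is exactly the paper's. Your (2)$\Rightarrow$(3) is complete and correct, but it takes a genuinely different and considerably longer route: you prove that the minimal injective cogenerator $Q$ is $\Sigma$-injective (by counting socle multiplicities in $E(Q^{(I)})$) and then invoke Faith's theorem together with the fact that every left ideal is the annihilator of a subset of $Q$. The paper instead makes a single observation --- a cyclic socle has cardinality at most $\mathrm{card}(R)$, so under (2) every injective module is a direct sum of summands whose socles are uniformly bounded by a fixed cardinal --- and then quotes the Faith--Walker decomposition criterion \cite[Proposition 27.5]{Wisbauer} to conclude that $R$ is left Noetherian. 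Your version buys a self-contained argument; the paper's buys brevity by outsourcing everything to that one criterion.

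The genuine gap is (1)$\Rightarrow$(3), which you explicitly leave unfinished. The idea you are missing is that square-freeness \emph{by itself} supplies the same uniform cardinal bound used in (2)$\Rightarrow$(3): if $N$ is a square-free module then the simple submodules of $N$ are pairwise non-isomorphic, so $\mathrm{soc}(N)$ embeds in $\bigoplus_i S_i$ (one copy of each simple) and hence $c(N)\le\kappa$, where $\kappa$ is the cardinality of the set of maximal left ideals of $R$. Thus under (1) every injective left module is a direct sum of summands with socle bounded by the fixed cardinal $\kappa$, and the paper again concludes by \cite[Proposition 27.5]{Wisbauer}. In particular no further splitting of the summands $M_\lambda=E\bigl(\bigoplus_{i\in T_\lambda}S_i\bigr)$ is attempted, the $\Sigma$-injectivity of the individual $E(S_i)$ plays no role, and no Chase-type purity argument is needed; what is needed is only that the summands be uniformly ``small'' in the sense of Faith--Walker. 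One caveat: that criterion is stated for $\kappa$-generated summands, and a bound on $\mathrm{soc}(M_\lambda)$ controls $M_\lambda$ only modulo knowing that the socle is essential (injective modules over non-Noetherian rings can have zero socle), so the paper's one-line finish is itself terser than it should be --- your instinct that this is the delicate point is sound --- but the intended repair is the cardinal bound fed into \cite[Proposition 27.5]{Wisbauer}, not $\Sigma$-injectivity of $Q$.
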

   
\begin{proof} Let $M$ be an $R$-module, and ${\rm soc}(M) = \bigoplus_{\lambda \in \Lambda} S_{\lambda}$, where $S_{\lambda}$'s are simple modules. Then the cardinal number ${\rm card}(\Lambda)$ is denoted by $c(M)$. This cardinal number is uniquely determined.

(1) $\Rightarrow$ (3). Let $N$ be an injective square-free left $R$-module. The simple submodules of $N$ are non-isomorphic to each other and $c(N) \leqslant \kappa $, where $\kappa$ is the cardinal number of the set of left maximal ideals of $R$. Thus, every injective left $R$-module is a direct sum of modules $\lbrace N_{\gamma}\rbrace_{\gamma \in \Gamma}$ with $c(N_{\gamma}) \leq \kappa$. So by \cite[Proposition 27.5]{Wisbauer}, $R$ is left Neotherian.

(3) $\Rightarrow$ (1). It  is clear by \cite[Proposition 27.5]{Wisbauer}, and the fact that every indecomposable injective R-module is uniform, and hence square-free.

(2) $\Rightarrow$ (3). Let $N$ be an injective left $R$-module with cyclic socle. Set $k = {\rm card}(R)$, then $c(N) \leq {\rm card} (soc(N)) \leq {\rm card}(R)$, since $soc(N)$ is cyclic. Thus, every injective left $R$-module is a direct sum of modules $\lbrace N_{\gamma}\rbrace_{\gamma \in \Gamma}$ with $c(N_{\gamma}) \leq {\rm card}(R)$. So by \cite[Proposition 27.5]{Wisbauer}, $R$ is left Neotherian.

(3) $\Rightarrow$ (2).  It is clear by \cite[Proposition 27.5]{Wisbauer}, and the fact that every indecomposable injective $R$-module is uniform, and so it has a uniform socle. As a result of, it has a simple (and hence cyclic) socle. 
\end{proof}

We recall that a  ring $R$  is said to be of {\it left bounded (representation) type}, if it is  left Artinian and there is a finite upper bound for the lengths of the finitely
generated indecomposable modules in $R$-Mod.  By \cite[Proposition 54.3]{Wisbauer}, $R$  is of left bounded representation type if and only if $R$ is of finite representation type.

The following theorem characterize the rings over which every left module is a direct sum of square-free modules. It is one of the most important and key results of this section and plays an essential role in characterization of left co-K{\"o}the rings.
\begin{The}\label{square-free module} 
The following statements are equivalent for a ring  $R$: \vspace*{0.2cm} \\
\indent {{\rm (1)}} Every left $R$-module is a direct sum of square-free modules.\\
\indent {{\rm (2)}} $R$ is a  left Artinian ring,  and   every  left $R$-module is a direct sum of modules  with \indent \indent square-free socle.\\
\indent {{\rm (3)}} $R$ is of finite representation type,  and  every (finitely generated)  indecomposable  left \indent \indent  $R$-module has a square-free socle.
\end{The}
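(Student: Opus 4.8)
The plan is to prove the cycle $(3) \Rightarrow (2) \Rightarrow (1) \Rightarrow (3)$, with the first two implications routine and the third carrying all the weight. For $(3) \Rightarrow (2)$, a ring of finite representation type is left Artinian and every left module is a direct sum of finitely generated indecomposables, each of which has square-free socle by hypothesis; so $(2)$ is immediate. For $(2) \Rightarrow (1)$, Lemma \ref{square-free} shows that over a left Artinian ring a module has square-free socle if and only if it is square-free, so the decomposition supplied by $(2)$ is already a decomposition into square-free modules. Thus the substance lies in $(1) \Rightarrow (3)$, which I would organize into two stages: (I) reaching left pure semisimplicity, and (II) upgrading this to finite representation type.

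For stage (I), I would first apply $(1)$ to injective modules and invoke Theorem \ref{is left Noetherian} to conclude that $R$ is left Noetherian. The technical heart is then a single cardinal $\aleph$ such that \emph{every} square-free left module is $\aleph$-generated. Since $R$ is left Noetherian, for square-free $N$ the injective hull splits as $E(N)=\bigoplus_{i\in I}E_i$ into indecomposable injectives, and I claim the $E_i$ are pairwise non-isomorphic. Indeed, if $\phi\colon E_i\to E_j$ were an isomorphism of distinct summands, choose $0\neq a\in N\cap E_i$ (possible as $N\leq_e E(N)$) and $r\in R$ with $0\neq r\phi(a)\in N$ (possible as $N\cap E_j\leq_e E_j$); then $b=ra\in N\cap E_i$ and $\phi(b)=r\phi(a)\in N\cap E_j$ are nonzero with $Rb\cong R\phi(b)$ and $Rb\cap R\phi(b)=0$, so $Rb\oplus R\phi(b)\subseteq N$ contradicts square-freeness. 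Hence ${\rm card}(I)$ is bounded by the fixed number of isomorphism classes of indecomposable injectives, and as each $E_i$ has bounded cardinality we get ${\rm card}(N)\leq{\rm card}(E(N))\leq\aleph$ for a cardinal $\aleph$ depending only on $R$.

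With this bound in hand, $(1)$ exhibits every left module as a direct sum of $\aleph$-generated modules, so Lemma \ref{Zimmermann} yields that $R$ is left pure semisimple, and \cite[Theorem 4.4]{chase} (already quoted in the proof of Proposition \ref{left Artinian}) then forces $R$ to be left Artinian; alternatively the descending chain condition can be drawn directly from Lemma \ref{chase}. For stage (II), being left pure semisimple means every left module is a direct sum of finitely generated modules; applying this to the indecomposable module $E(S_i)$, where $\{S_1,\dots,S_m\}$ are the simple left $R$-modules, forces each $E(S_i)$ to be finitely generated, hence of finite length over the left Artinian ring $R$. Therefore the minimal injective cogenerator $Q=\bigoplus_{i=1}^m E(S_i)$ has finite length. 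Over a left Artinian ring every square-free module has essential socle equal to a direct sum of pairwise non-isomorphic simples, hence embeds into $Q$; in particular every finitely generated indecomposable module, which is square-free by $(1)$, has length at most ${\rm length}(Q)$. Thus $R$ is of left bounded representation type, and \cite[Proposition 54.3]{Wisbauer} delivers finite representation type, while square-freeness of the indecomposables gives their square-free socle, completing $(3)$.

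I expect the real obstacle to be stage (II): the hope that left pure semisimplicity \emph{by itself} gives finite representation type is precisely the open pure-semisimplicity conjecture, so the argument must genuinely use the square-free hypothesis. The decisive maneuver is that square-freeness confines every such module inside the fixed cogenerator $Q$, while pure semisimplicity forces $Q$ to have finite length; it is this finiteness of $Q$, not pure semisimplicity alone, that bounds the lengths of the indecomposables. The bounded-cardinality lemma of stage (I), and in particular the essential-extension argument ruling out repeated indecomposable injective summands, is the tool that makes Lemma \ref{Zimmermann} applicable and should be checked carefully.
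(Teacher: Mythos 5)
Your proof is correct, and for the decisive implication it takes a genuinely different route from the paper's. The two arguments share their endgame: both use Theorem \ref{is left Noetherian} to get left Noetherianness, both reach left pure semisimplicity through Lemma \ref{Zimmermann}, and both then embed every square-free (hence every indecomposable) module into the finite-length cogenerator $E(S_1)\oplus\cdots\oplus E(S_m)$ to obtain bounded, hence finite, representation type via \cite[Proposition 54.3]{Wisbauer}; your stage (II) is essentially the paper's proof of (2) $\Rightarrow$ (3). Where you diverge is in how the cardinal bound feeding Lemma \ref{Zimmermann} is produced. The paper first proves (1) $\Rightarrow$ (2), i.e.\ that $R$ is left Artinian, by passing to the prime factors $R/\mathcal{P}$: it uses Goldie theory (torsion-freeness, finite uniform rank, \cite[Proposition 7.24]{Goodearl}) to show the square-free summands of $\prod_\alpha \bar R$ are uniform and embed in $E(\bar R)$, applies Chase's Lemma \ref{chase} to get DCC on principal right ideals, concludes each prime factor is simple Artinian, and assembles left Artinianness through the prime radical. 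You instead bound the cardinality of an arbitrary square-free module $N$ directly: over the Noetherian ring $E(N)$ decomposes into indecomposable injectives, and your essential-extension argument showing these summands must be pairwise non-isomorphic is correct and is the real novelty of your route --- it confines $E(N)$ to at most one copy of each isomorphism class of indecomposable injectives. This avoids Goldie theory and the reduction to prime factors entirely, at the price of losing the paper's intermediate structural fact about $R/\mathcal{P}$. One point worth making explicit: the isomorphism classes of indecomposable injective left modules over a left Noetherian ring form a set (each is the hull of a uniform cyclic module $R/I$), so the supremum of the cardinalities of a system of representatives exists; that is what justifies the phrase ``each $E_i$ has bounded cardinality.'' Your routine directions (3) $\Rightarrow$ (2) $\Rightarrow$ (1) via Lemma \ref{square-free} agree in substance with the paper.
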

\begin{proof}
(1) $\Rightarrow$ (2). 
First we prove that for every (minimal) prime ideal $\mathcal{P}$  of $R$ the ring  $R / \mathcal{P}$  is a simple Artinian ring.  For to see this, let $\mathcal{P}$ be a minimal prime ideal of $R$. 
Then by Theorem \ref{is left Noetherian}, $R$ is left Noetherian and so    $R / \mathcal{P}$ is a prime left  Goldie ring. Set $\bar{R} := R / \mathcal{P}$. It's easy to see that every left $\bar{R}$-module is a direct sum of square-free modules. 
Suppose that  $A$ is  an infinite set of cardinality $\zeta$ where
$\zeta\geq card(E(_{\bar{R}}\bar{R}))$,  and we   set $M =\prod_{\alpha\in A}{\bar{R}}^{(\alpha)}$, where $\bar{R}^{(\alpha)}\cong \bar{R}$ as  left $\bar{R}$-modules. By assumption, $M$ 
 is a direct sum of square-free $\bar{R}$-modules $\{K_{\lambda}\}_{\lambda \in \Lambda}$, where $\Lambda$ is a set. By \cite[Proposition 7.8 and 7.9]{Goodearl},
 every left $\bar{R}$-module $K_{\lambda}$ is torsion-free. We claim that each of $K_{\lambda}$ is a uniform $\bar{R}$-module; for if not, let $K := K_{\lambda}$ be non-uniform, then
 $\bar{R}x \bigoplus \bar{R}y \subseteq K$ for some $0 \neq x,~y \in K$. Since $\bar{R}$ has finite rank, $\bar{R}x \bigoplus \bar{R}y$ has finite rank $n \geqslant 2$. But $\bar{R}x \bigoplus \bar{R}y$ is torsion-free, then by
  \cite[Proposition 7.24]{Goodearl}, it has an essential submodule isomorphic to $\bigoplus^{n}U$ where $U$ is a uniform left ideal of $\bar{R}$. This contradicts the square-freeness property of $K$,
   hence $K_{\lambda}$'s are uniform. Thus,  by \cite[Proposition 7.18]{Goodearl}, each $K_{\lambda}$ has an essential submodule $B$ which is isomorphic to a left ideal $I$ of $\bar{R}$. It follows that
\begin{center}
$K_{\lambda} \subseteq E(K_{\lambda}) = E(B) \cong E(I) \subseteq E(\bar{R}).$
\end{center}
Therefore,  each $K_{\lambda}$ is generated by a subset of cardinality less than or equal to $\zeta$, and so   by Lemma \ref{chase}, we conclude that $\bar{R}$ has DCC on principal right ideals. Hence,  $soc_{r}(\bar{R}) \neq 0$. Since $\bar{R}$ is a prime ring, ${\rm soc}_{l}(\bar{R}) = {\rm soc}_{r}(\bar{R})$, and so by \cite[Corollary 7.16]{Goodearl}, $\bar{R}$ is a simple Artinian ring.

 Now we show that,   in fact the ring $R$ left Artinian. Assume that $N(R)$ is the prime radical of $R$. Clearly, every $R/N(R)$-module is a direct sum of square-free $R/N(R)$-modules. Thus,  by Theorem  \ref{is left Noetherian}, $\tilde{R} = R/N(R)$ is a semiprime left Goldie ring and every (minimal) prime ideal of $R/N(R)$ is maximal. So by \cite[Proposition 7.1]{Goodearl}, there exist only finitely many distinct minimal prime,  hence maximal, ideals of $\tilde{R}$. This implies that there exist only finitely many distinct minimal prime,  hence maximal, ideals $\mathcal{P}_{1}, \ldots,\mathcal{P}_n$ of $R$. Now by \cite[Proposition 9.13]{Wisbauer},
 \begin{center}
$R/N(R)\cong R/\mathcal{P}_{1} \bigoplus \cdots \bigoplus R/\mathcal{P}_{n}$
\end{center}
Since each $R/\mathcal{P}_{i}$ is a simple Artinian ring,  $R/N(R)$ is a semisimple Artinian ring. Furthermore, by Theorem \ref{is left Noetherian}, $R$ is left Noetherian, and so $N(R)^{m} = 0$, for some $m \in \mathbb{N}$. Moreover, each $N(R)^{i}/N(R)^{i+1}$ is a finitely generated $R$-module and hence a finitely generated $R/N(R)$-module. Thus, each $N(R)^{i}/N(R)^{i+1}$ is Artinian for $i = 0, \cdots, n$ and so it follows that $R$ is a left Artinian ring.

(2) $\Rightarrow$ (3). 
Since $R$ is a left Artinian ring, there are only finitely many non-isomorphic simple $R$-modules by \cite[Proposition 32.4]{Wisbauer}. Let $W=\bigoplus_{i=1}^k S_i$ be a direct sum of all non-isomorphic simple $R$-modules. Now let $K$ be a  left $R$-module with square-free socle. Then ${\rm {\rm soc}}(K) \hookrightarrow W$  and  ${\rm soc}(K) \subseteq_{e} K$, since $R$ is left Artinian. Hence
\begin{center}
$K \subseteq E(K) = E({\rm {\rm soc}}(K))  \hookrightarrow E(W)~~~~~~(*)$
\end{center}
Let  ${\aleph}=Card(E(W))$,    then   every left $R$-module is a direct sum of ${\aleph}$-generated    $R$-modules and so,   by Lemma \ref{Zimmermann}, $R$ is a  left pure semisimple ring.
 Since each $S_i$ is uniform,  $E(S_i)$ is an indecomposable finitely generated  module and hence ${\rm length} (E(S_i))<  \infty$ for $i=1,\cdots,~k$.  It follows that  ${\rm length}(E(W))<  \infty$ and  we set ${\rm length}(E(W))=m$.  Combining this fact with Inequality $(*)$, we obtain that for each finitely generated  indecomposable  module $K$,  
   $${\rm length} (K) \leq{\rm length} (E(K))\leq {\rm length} (E(W))=m.$$ 
   
   Therefore, $_RR$ is of bounded representation type and so by \cite[Proposition 54.3]{Wisbauer}, $R$ is of  finite representation type  and  every (finitely generated)  indecomposable  left $R$-module has a square-free 
     socle.
     
     (3) $\Rightarrow$ (1).  It is direct.
         \end{proof}   

 Next, we give a new characterization of  left pure semisimple rings as following:

\begin{The}\label{left pure semisimple}
The following are equivalent for a ring $R$: \vspace*{0.2cm} \\
\indent {{\rm (1)}} Every nonzero left $R$-module is a direct sum of modules with nonzero finitely  gene- \indent\indent rated   socle.\\
\indent {{\rm (2)}} $R$ is a left pure semisimple ring.
\end{The}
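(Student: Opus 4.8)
The plan is to prove the two implications separately, the forward direction (2) $\Rightarrow$ (1) being routine and (1) $\Rightarrow$ (2) carrying the real content. For (2) $\Rightarrow$ (1): if $R$ is left pure semisimple, then by \cite[Theorem 4.4]{chase} $R$ is left Artinian, so every nonzero left $R$-module is a direct sum of nonzero indecomposable modules, each of finite length. A nonzero finite-length module has nonzero socle, and that socle is itself of finite length, hence finitely generated; so each indecomposable summand has nonzero finitely generated socle, which is exactly (1).

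For (1) $\Rightarrow$ (2) I would proceed in three stages and then invoke Lemma \ref{Zimmermann}. The first and decisive stage is to show that $R$ is left semiartinian. Indeed, (1) forces every nonzero left module to have nonzero socle: if $N \neq 0$ had zero socle, then writing $N = \bigoplus_\delta N_\delta$ as in (1) with each $N_\delta$ of nonzero socle would produce $0 \neq \bigoplus_\delta \mathrm{soc}(N_\delta) \subseteq \mathrm{soc}(N)$, a contradiction. Consequently $\mathrm{soc}(M)$ is essential in every left module $M$. The second stage is to show $R$ is left Noetherian: applying (1) to an arbitrary injective left module $E = \bigoplus_\gamma M_\gamma$ with each $\mathrm{soc}(M_\gamma)$ finitely generated, one has $c(M_\gamma) < \aleph_0 \le \mathrm{card}(R) + \aleph_0$, so the very same cardinality argument used in Theorem \ref{is left Noetherian} (through \cite[Proposition 27.5]{Wisbauer}) yields that $R$ is left Noetherian.

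The third stage combines the previous two: a left Noetherian, left semiartinian ring is left Artinian, since the Loewy series of $_RR$ is an ascending chain that reaches $R$ (by semiartinianness) whose consecutive factors are finitely generated semisimple, hence of finite length, and the ACC forces the chain to terminate after finitely many steps, so $_RR$ has finite length. To finish, I would use that a left Artinian ring has only finitely many simple left modules $S_1, \ldots, S_k$; setting $W = S_1 \oplus \cdots \oplus S_k$ and $\aleph = \mathrm{card}(E(W)) + \aleph_0$, I take any nonzero $M = \bigoplus_\gamma M_\gamma$ as in (1). Since $\mathrm{soc}(M_\gamma)$ is finitely generated it embeds in $W^{\ell}$ for some finite $\ell$, and since the socle is essential (stage one), $M_\gamma \hookrightarrow E(\mathrm{soc}(M_\gamma)) \hookrightarrow E(W)^{\ell}$, whence $\mathrm{card}(M_\gamma) \le \aleph$ and $M_\gamma$ is $\aleph$-generated. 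Thus every left module is a direct sum of $\aleph$-generated modules, and Lemma \ref{Zimmermann} gives that $R$ is left pure semisimple.

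The step I expect to be the main obstacle is passing from the purely local data ``each summand has nonzero finitely generated socle'' to a cardinality bound that is \emph{uniform} over all summands, which is what Lemma \ref{Zimmermann} requires. The mechanism that makes this possible is the upgrade, in the first stage, from ``nonzero socle for every nonzero module'' to ``essential socle for every module'' (left semiartinianness): without essentiality of the socle, the controlling embedding $M_\gamma \hookrightarrow E(\mathrm{soc}(M_\gamma))$ fails and the injective-hull bound collapses. The reduction to finitely many simple modules (via left Noetherian plus left semiartinian $\Rightarrow$ left Artinian) is what turns $E(W)$ into a single fixed module of fixed cardinality, closing the argument; so the semiartinian step is genuinely the crux.
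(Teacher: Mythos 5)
Your proof is correct, and its engine is the same as the paper's: show that condition (1) forces the socle of each summand to be essential (so each summand embeds in the injective hull of its finitely generated semisimple socle), extract a uniform cardinal bound on the summands, and invoke Lemma \ref{Zimmermann}; the converse is handled identically via left Artinianness of left pure semisimple rings. The one structural difference is the middle of your argument: you first prove $R$ is left Noetherian (by feeding injective modules into the Faith--Walker-type criterion of \cite[Proposition 27.5]{Wisbauer}, as in Theorem \ref{is left Noetherian}) and left semiartinian, conclude via the Loewy series that $R$ is left Artinian, and only then use the finiteness of the set of simples to form the single bounding module $E(W)$. The paper skips this reduction entirely: it bounds every summand at once by ${\rm Card}\bigl(\bigoplus_{\Gamma}E(N_{\gamma})\bigr)$, where $\{N_{\gamma}\}_{\Gamma}$ runs over a representing set of all finitely generated semisimple left modules, so no finiteness of the set of simple modules is needed before applying Lemma \ref{Zimmermann}. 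Your detour is harmless and yields the intermediate fact that $R$ is left Artinian as a by-product (which the paper only recovers afterwards from pure semisimplicity via \cite[Theorem 4.4]{chase}), at the cost of an extra appeal to the Noetherian criterion; the paper's direct cardinality bound is shorter. Your identification of the essential-socle upgrade as the crux matches where the paper's proof also does its real work.
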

\begin{proof}
(1) $\Rightarrow$ (2). Assume that $M$ is a left $R$-module. Then  $M=\bigoplus_{\lambda \in \Lambda} K_\lambda$,  where  $\Lambda$ is an index set and   for each ${\lambda \in \Lambda}$,   $K_\lambda\leq M$ and    ${\rm soc}(K_\lambda) \neq 0$,  and it has   finite length.  Since every nonzero submodule of $K_\lambda$ is also  a direct sum of modules with nonzero finitely generated socle,   every  nonzero submodule of $K_\lambda$ contains a simple module.  It follows that  ${\rm soc}(K_\lambda) \leq_e K_\lambda$  for each $\lambda \in \Lambda$. Thus \\
  $K_\lambda \leq  E(soc(K_\lambda))\cong E(\bigoplus_{i=1}^nR/P_i)$,  where  $n\in\Bbb{N}$ and  $P_i$'s are maximal left ideals of 
$R$.

Let ${\aleph}=Card(E)$,  where  $E=\bigoplus_{\Gamma}  E(N_\gamma)$,  and    $\{N_\gamma~|~\gamma\in\Gamma\}$ is a mutually non-isomorphic finitely generated semisimple left $R$-modules.
  Then   every left $R$-module is a direct sum of ${\aleph}$-generated    $R$-modules and so,   by Lemma \ref{Zimmermann}, $R$ is left pure semisimple.

(2) $\Rightarrow$ (1).  It  is  by the fact that a left pure semisimple ring is left Artinian.
\end{proof}

We are now in a position to present the properties and characterizations  of left co-K{\"o}the rings. In fact, our 
theorem characterizes completely those rings of  finite representation type for which any indecomposable module has  a cyclic  (essential) socle, and describes the possible indecomposable modules.

 \begin{The}\label{Theorem}\label{left co-Kothe}
The following conditions are equivalent for a ring  $R$:\vspace*{0.2cm} \\
\indent {{\rm (1)}}   $R$ is a left  co-K{\"o}the  ring.\\
\indent {{\rm (2)}}  Every  nonzero left $R$-module is a direct sum of  modules with nonzero top and  \indent\indent cyclic essential socle.\\
\indent {{\rm (3)}} $R$  is  left  perfect  and every left  $R$-module is a direct sum of  modules with  cyclic  \indent\indent (essential) socle.\\
\indent {{\rm (4)}}  $R$ is  left Artinian  and  every   left $R$-module  is a direct sum of modules with cyclic   \indent\indent  (essential)  socle\\
\indent {{\rm (5)}} $R$ is  left pure semisimple  and  every   left $R$-module  is a direct sum of modules with \indent\indent cyclic (essential) socle.\\
\indent {{\rm (6)}}  $R$ is  of finite representation type and  every  $($finitely generated$)$ indecomposable left  \indent\indent $R$-module  has a cyclic  (essential) socle.\\
 \indent {{\rm (7)}}   $R$ is of finite representation type  and  $soc(U)\hookrightarrow R/J$ for each  indecomposable left \indent\indent  $R$-module $U$.\\
\indent {{\rm (8)}}   $R$ is of finite representation type,   $R\cong \oplus_{i=1}^n(Re_i)^{(t_i)}$, where $\{Re_1,\cdots, Re_n\}$   is \indent\indent a   complete  set of representatives of the isomorphism classes   of 
indecomposable   \indent\indent   projective   left  $R$-modules with  $n,~t_i\in\Bbb{N}$,  and  for each  indecomposable left $R$-module  \indent\indent $U$,  $soc(U) \cong (Re_1/Je_1)^{(u_1)}\oplus\cdots\oplus (Re_n/Je_n)^{(u_n)}$     for some
 $u_i\in\Bbb{N}\cup \{0\}$,  where \indent\indent  $u_i\leq t_i$   for all $i=1,\cdots,n.$
\end{The}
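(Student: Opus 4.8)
The theorem asserts the equivalence of eight conditions characterizing left co-Köthe rings. My plan is to prove it by a chain of implications that threads through all eight statements, using the structural machinery already established in the excerpt, principally Theorem \ref{square-free module}, Theorem \ref{left pure semisimple}, Theorem \ref{is left Noetherian}, Proposition \ref{Projective cover}, and Lemma \ref{square-free}. The natural cycle is $(1) \Rightarrow (2) \Rightarrow (3) \Rightarrow (4) \Rightarrow (5) \Rightarrow (6) \Rightarrow (7) \Rightarrow (8) \Rightarrow (1)$, with short cross-implications where the logic is cleaner to route differently. The backbone of the argument is that ``cyclic socle'' forces finiteness conditions on the ring, while finite representation type plus a socle-cyclicity condition on indecomposables lets one reassemble an arbitrary module.

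First I would establish the finiteness steps. For $(1) \Rightarrow (2)$: a module $M$ with cyclic socle is nonzero only if it has nonzero socle, and I must verify the socle is essential and the top is nonzero; the essentiality comes once we know $R$ is left Artinian, which I would extract by first observing that a cyclic socle is in particular finitely generated, so Theorem \ref{left pure semisimple} gives left pure semisimplicity, hence left Artinian, hence socles are essential in nonzero modules. This simultaneously powers $(2) \Rightarrow (3) \Rightarrow (4) \Rightarrow (5)$, since for a left Artinian ring the adjectives ``left perfect,'' ``left Artinian,'' and ``left pure semisimple'' collapse in the presence of the direct-sum-decomposition hypothesis (pure semisimplicity $\Rightarrow$ Artinian is \cite[Theorem 4.4]{chase}, quoted in Proposition \ref{left Artinian}(i)). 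The key numerical fact throughout is Proposition \ref{Projective cover}(b): for a finitely generated $M$ over a semiperfect ring with basic decomposition $R \cong \oplus_{i=1}^n (Re_i)^{(t_i)}$, we have $soc(M) \cong \oplus_{i=1}^n (Re_i/Je_i)^{(u_i)}$, and $soc(M)$ is cyclic exactly when $u_i \le t_i$ for all $i$. This is precisely what turns condition $(6)$ into the explicit condition $(8)$ and reconciles $(7)$ (the embedding $soc(U) \hookrightarrow R/J$) with cyclicity, since $R/J \cong \oplus_i (Re_i/Je_i)^{(t_i)}$.

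The decisive step is $(5) \Rightarrow (6)$ (or equivalently reaching finite representation type): I would argue that once $R$ is left pure semisimple, every module is a direct sum of finitely generated indecomposables, so the direct-sum decomposition hypothesis transfers the ``cyclic socle'' property down to each indecomposable summand; then to upgrade left pure semisimple to \emph{finite representation type} I would invoke Theorem \ref{square-free module}, noting that a cyclic socle is square-free-or-controlled via Proposition \ref{Projective cover}(c) (square-free socle $\Rightarrow$ cyclic socle is the easy direction, but I need the converse bound), so I would instead bound the lengths of indecomposables directly: using the embedding $M \subseteq E(soc(M)) \hookrightarrow E(R/J)$ for finitely generated indecomposable $M$ exactly as in the proof of Theorem \ref{square-free module}, and the fact that $E(R/J)$ has finite length over a left pure semisimple (hence Artinian) ring, I get a uniform length bound, hence bounded representation type, hence finite representation type by \cite[Proposition 54.3]{Wisbauer}. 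The remaining implications $(6) \Leftrightarrow (7) \Leftrightarrow (8)$ are then pure bookkeeping with Proposition \ref{Projective cover}, and $(8) \Rightarrow (1)$ is immediate since finite representation type already guarantees every module is a direct sum of finitely generated indecomposables, each of which has cyclic socle by $(8)$.

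The main obstacle I anticipate is the length-bound argument at the $(5) \Rightarrow (6)$ step: showing that cyclicity of the socle of every indecomposable, together with left pure semisimplicity, genuinely forces a \emph{uniform} bound on lengths rather than merely Artinian-ness. The subtlety is that a priori there could be infinitely many indecomposables of unbounded length each still having cyclic (even simple) socle. The resolution is the embedding into the fixed finite-length injective $E(R/J) = E(\oplus_i (Re_i/Je_i)^{(t_i)})$: because every finitely generated indecomposable $M$ satisfies $soc(M) \hookrightarrow R/J$ (this is where condition $(7)$ is doing real work, and where I must be careful that cyclic socle indeed embeds into $R/J$ rather than just into some power of the simples), essentiality of the socle gives $M \hookrightarrow E(soc(M)) \hookrightarrow E(R/J)$, and $\mathrm{length}(M) \le \mathrm{length}(E(R/J)) =: m < \infty$ uniformly. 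I would isolate this embedding as the crux and verify it cleanly via Proposition \ref{Projective cover}(b); everything else is an orchestration of already-proven equivalences.
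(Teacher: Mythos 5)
Your proposal is correct and follows essentially the same route as the paper: Theorem \ref{left pure semisimple} (via cyclic $\Rightarrow$ finitely generated socle) to get pure semisimplicity and hence left Artinian for the downward implications, the embedding $K \subseteq E(soc(K)) \hookrightarrow E(R/J)$ with $\mathrm{length}(E(R/J)) < \infty$ to obtain bounded, hence finite, representation type at the $(5) \Rightarrow (6)$ step, and Proposition \ref{Projective cover}(a),(b) for the equivalences $(6) \Leftrightarrow (7) \Leftrightarrow (8)$ and the return to $(1)$. The only difference is organizational (a single cycle versus the paper's hub-and-spoke around $(1)$), and the crux you correctly isolate --- that a cyclic semisimple socle is a direct summand of $R/J$, giving the uniform length bound --- is exactly the paper's argument.
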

\begin{proof}
(1) $\Rightarrow$ (2). By Theorem \ref{left pure semisimple},  $R$ is left pure semisimple, so by \cite[Theorem 4.4]{chase}, $R$ is a  left Artinian ring and  every left $R$-module is a direct sum of  finitely generated indecomposable  left $R$-modules with  cyclic socle.     Clearly every finitely generated indecomposable module has a maximal submodule and so  it has a nonzero top. 

(1) $\Rightarrow$ (3), (1) $\Rightarrow$ (4) and (1) $\Rightarrow$ (5). By Theorem \ref{left pure semisimple}, $R$ is a left pure semisimple. So $R$ is left Artinian and hence left perfect.

(2) $\Rightarrow$ (1), (3) $\Rightarrow$ (1), (4) $\Rightarrow$ (1) and (5) $\Rightarrow$ (1). By \cite[Proposition 21.2 (3)]{Wisbauer}, every module with essential socle has a nonzero sacle.

(5) $\Rightarrow$ (6).  Since $R$ is  left pure semisimple,  by \cite[Theorem 4.4]{chase}, $R$ is a  left Artinian ring.  
Thus,   there are only finitely many non-isomorphic simple $R$-modules by \cite[Proposition 32.4]{Wisbauer}. Set $W=\bigoplus_{i=1}^k S_i$ is a direct sum of all non-isomorphic simple $R$-modules. Since each $S_i$ is uniform,  $E(S_i)$  and is an   indecomposable finitely generated  module and hence $length(E(S_i))<  \infty$ for $i=1,\cdots,k$. It follows  that  ${length(E(R/J))<\infty}$ and we set  $m=length(E(R/J))$.

Now let $K$ be a  left $R$-module with cyclic  socle. Then  ${\rm soc}(K) \subseteq_{e} K$, since $R$ is left Artinian and $K \subseteq E(K) = E(soc(K))$.  Since  $soc(K)$ is a  cyclic semisimple $R/J$-module,  $soc(K)$ is isomorphic to a direct summand of    $R/J$. 
   It follows that  $length(K)\leq length(E(soc(K)) \leq  length(E(R/J))=m$.    Therefore, $_RR$ is of bounded representation type and so by \cite[Proposition 54.3]{Wisbauer}, $R$ has finite representation type  and  every (finitely generated)  indecomposable  left $R$-module has a cyclic   socle.
   
   (6) $\Rightarrow$ (7).  Since $R/J$ is a semisimple module,   it is clear that if $soc(U)\hookrightarrow R/J$, then  $soc(U)$ is isomorphic to a direct summand of $R/J$ and so it is a cyclic module.   
   
   (7) $\Rightarrow$ (8). It is by Proposition \ref{Projective cover} (a) and (b).
       
   (8) $\Rightarrow$ (1).  It is clear  by Proposition \ref{Projective cover} (a).
 \end{proof}

\section{\bf Characterizations of strongly left  co-K{\"o}the rings}
In this section, we will to characterize strongly  left co-K{\"o}the rings. Also, we give more characterizations of a left co-K{\"o}the ring $R$, whenever $R$ is left quasi-duo. 

We recall that a semi-perfect ring $R$ is called {\it left}  (resp., {\it right})  {\it $QF$-2}  if every indecomposable projective left
(resp., right) $R$-module has a simple essential socle (see \cite{Fuller4}). 
\begin{Defs}
{\rm We will say that a semi-perfect ring $R$  is a\\
{\it - left } (resp., {\it right})  {\it co-$QF$-2  ring} if every indecomposable projective left (resp., right)  \indent  $R$-module has a simple  top.\\
 {\it  -  co-$QF$-2  ring} if $R$ is both a  left  and a   right  co-$QF$-2  ring.\\
 {\it - generalized left}  (resp., {\it right})  {\it co-$QF$-2  ring} if every indecomposable projective left (resp., \indent  right)  $R$-module $P$ has a square-free   top.\\  
{\it - generalized   co-$QF$-2  ring} if $R$ is a generalized left  and a generalized  right  co-$QF$-2  ring.}
\end{Defs}

We are now in a position to present several  properties and characterizations  for strongly left  co-K{\"o}the  rings. In fact, 
the next  theorem characterizes completely those rings of  finite representation type for which any  $($finitely generated$)$ indecomposable left  module  has a square-free  (cyclic)  socle.
Moreover,  the theorem characterizes  those rings over which every left module is a direct sum of square-free modules. Also, 
this characterization  makes an interesting connection between the three concepts, strongly left co-K{\"o}the rings, generalized left   $QF$-2 rings and generalized right   co-$QF$-2  rings.

\begin{The}\label{Theorem}\label{strongly left co-Kothe}
The following conditions are equivalent for a ring  $R$:\vspace*{0.2cm} \\
\indent {{\rm (1)}}   $R$ is a strongly left  co-K{\"o}the  ring.\\
\indent {{\rm (2)}}  Every left $R$-module is a direct sum of square-free modules.\\
\indent {{\rm (3)}}  Every  nonzero left $R$-module is a direct sum of  modules with nonzero top and  \indent\indent square-free  (cyclic)  socle.\\
\indent {{\rm (4)}} $R$  is  left  perfect  and every left  $R$-module is a direct sum of  modules with   square-free  \indent\indent (cyclic) socle.\\
\indent {{\rm (5)}}  $R$ is  left Artinian  and  every   left $R$-module  is a direct sum of modules with square- \indent\indent  free  (cyclic)  socle.\\
\indent {{\rm (6)}} $R$ is  left pure semisimple  and  every   left $R$-module  is a direct sum of modules with   \indent\indent  square-free (cyclic)   socle.\\
\indent {{\rm (7)}}  $R$ is  of finite representation type and  every  $($finitely generated$)$ indecomposable left  \indent\indent $R$-module  has a square-free  (cyclic)  socle.\\
\indent {{\rm (8)}}  $R$ is  of finite  representation type  and  the left Morita dual ring of $R$  is  a strongly  \indent\indent right K{\"o}the  ring.\\  
\indent {{\rm (9)}} $R$ is  of finite  representation type and  the left Auslander ring  of $R$ is a  generalized \indent \indent left   $QF$-2 ring \\
\indent {{\rm (10)}}  $R$ is  of finite representation  type  and the left  Auslander ring  of $R$  is a  generalized \indent\indent right co-$QF$-2 ring.\\  
 \indent {{\rm (11)}}   $R$ is of finite representation type  and   for each  indecomposable left   $R$-module $U$, 
 \indent\indent  $soc(U)\hookrightarrow  (Re_1/Je_1)\oplus\cdots\oplus (Re_n/Je_n),$  where  $e_1,\cdots, e_n$ is a basic set of primitive \indent\indent idempotents for $R$.
\end{The}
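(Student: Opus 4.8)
The plan is to organize the eleven conditions into three groups and to treat the equivalence (1)$\Leftrightarrow$(2) as the technical core, from which the rest follow with the machinery already assembled. Conditions (2), (5), (7) are, up to the parenthetical word ``cyclic'', precisely the three equivalent conditions of Theorem \ref{square-free module}, so I would first record (2)$\Leftrightarrow$(5)$\Leftrightarrow$(7) as an immediate consequence of that theorem. The parenthetical ``(cyclic)'' socle is then legitimate: over the (semiperfect, in fact Artinian) ring produced by Theorem \ref{square-free module}, a finitely generated module with square-free socle automatically has cyclic socle by Proposition \ref{Projective cover}(c), and over a pure semisimple ring the square-free summands may be taken finitely generated, so the two adjectives describe the same decompositions.

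For the heart (1)$\Leftrightarrow$(2): the direction (2)$\Rightarrow$(1) is short, since Theorem \ref{square-free module} makes $R$ left Artinian, whence every nonzero module has nonzero socle and, by Lemma \ref{square-free}, a square-free module has square-free socle; discarding zero summands turns the square-free decomposition of (2) into the ``nonzero square-free socle'' decomposition of (1). The substantive direction is (1)$\Rightarrow$(2), and here I would mimic the socle-cardinality argument of Theorems \ref{left pure semisimple} and \ref{left co-Kothe}. First, (1) forces every nonzero left module to have nonzero socle, so every nonzero submodule of any summand $K_\lambda$ occurring in (1) contains a simple module; hence $soc(K_\lambda)\leq_e K_\lambda$. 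Since $soc(K_\lambda)$ is square-free it is a direct sum of pairwise non-isomorphic simples, so $K_\lambda\hookrightarrow E(soc(K_\lambda))$ embeds into the fixed injective module $E=\bigoplus_S E(S)$, the sum running over a representing set of the simple left $R$-modules. Putting $\aleph=\mathrm{Card}(E)$, every left module becomes a direct sum of $\aleph$-generated modules, so Lemma \ref{Zimmermann} makes $R$ left pure semisimple, and \cite[Theorem 4.4]{chase} makes it left Artinian; then Lemma \ref{square-free} upgrades ``square-free socle'' to ``square-free module'', giving (2).

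With $\{(1),(2),(5),(7)\}$ identified, the ``ring-adjective'' conditions (3), (4), (6) are sandwiched: each forces $R$ to be simultaneously left perfect, left Artinian, left pure semisimple and of finite representation type, while the decomposition clause is in every case the single statement ``direct sum of modules with square-free (cyclic) socle''; over the resulting left perfect ring each nonzero finitely generated indecomposable summand has a maximal submodule, which supplies the ``nonzero top'' of (3). I would close this block with a short cycle, say (1)$\Rightarrow$(6)$\Rightarrow$(5)$\Rightarrow$(4)$\Rightarrow$(3)$\Rightarrow$(1), using pure semisimple $\Rightarrow$ Artinian $\Rightarrow$ perfect for the forward steps and the socle/top bookkeeping just described for the return. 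For the functorial conditions I would invoke finite representation type (now available) and Proposition \ref{equivalence}. Condition (11) merely restates square-free socle via Proposition \ref{Projective cover}(b), since embedding $soc(U)$ into the multiplicity-one semisimple module $\bigoplus_i Re_i/Je_i$ is equivalent to each simple occurring at most once. For (8) I would apply the duality $\mathrm{Hom}_R(-,Q)$ of Proposition \ref{equivalence}(b), which interchanges socle and top by parts (j) and (k), converting ``square-free socle'' of left $R$-modules into ``square-free cyclic top'' of right $S$-modules and thereby matching the definition of strongly right K{\"o}the. For (9) and (10) I would use the Auslander-ring equivalence $\mathrm{Hom}_R(V,-)$, which matches indecomposable $R$-modules with indecomposable projective $T$-modules (Proposition \ref{equivalence}(e)) and preserves square-free socle (Proposition \ref{equivalence}(f)), together with the identification $T\cong$ right Auslander ring of $S$ (Proposition \ref{equivalence}(d)) and the socle/top duality to pass between the ``generalized left $QF$-2'' and ``generalized right co-$QF$-2'' formulations.

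The main obstacle is the single implication (1)$\Rightarrow$(2). The hypothesis ``square-free socle'' is deceptively weak because, a priori, it neither bounds the number of isomorphism types of simple modules nor guarantees that socles are essential, so the Goldie/Chase route of Theorem \ref{square-free module} cannot be applied directly. The crucial observation that unlocks everything is that (1) already forces every nonzero module---hence every nonzero submodule---to have nonzero socle; only after extracting essentiality of the socles from this can one bound the cardinality of the summands by the injective hulls of the simples and invoke Lemma \ref{Zimmermann} to reach pure semisimplicity.
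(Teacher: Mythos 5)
Your proposal is correct, and its overall architecture --- (2)$\Leftrightarrow$(5)$\Leftrightarrow$(7) via Theorem \ref{square-free module}, the chain through (3), (4), (6) using pure semisimple $\Rightarrow$ Artinian $\Rightarrow$ perfect, condition (11) via Proposition \ref{Projective cover}(b), and the functorial conditions (8)--(10) via Proposition \ref{equivalence} together with the socle/top duality --- coincides with the paper's. The one place you genuinely diverge is (1)$\Rightarrow$(2). The paper stops as soon as it has shown $soc(K_\lambda)\leq_e K_\lambda$: an essential square-free socle already forces $K_\lambda$ to be square-free by the elementary argument in the proof of Lemma \ref{square-free} (if $A\oplus B\leq K_\lambda$ with $A\cong B\neq 0$, essentiality puts a simple submodule inside $A$, its isomorphic image sits inside $B$, and the pair violates square-freeness of $soc(K_\lambda)$); no Artinian hypothesis and no cardinality bound are needed for this step. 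You instead rerun the Zimmermann cardinality argument of Theorem \ref{left pure semisimple} to reach pure semisimplicity first and only then invoke Lemma \ref{square-free}. This is valid but redundant, and it introduces the one imprecision in your write-up: since $soc(K_\lambda)$ may a priori be an infinite direct sum of pairwise non-isomorphic simples, $E(soc(K_\lambda))$ need not embed in $\bigoplus_S E(S)$, which need not even be injective; you should instead bound $Card(K_\lambda)$ by $Card\bigl(E(\bigoplus_S S)\bigr)$, using that a square-free semisimple module embeds in $\bigoplus_S S$ and that injective hulls of submodules embed as direct summands of injective hulls. With that repair your route goes through, but the paper's direct conclusion from essentiality is shorter and is what reduces (1)$\Rightarrow$(2) to a one-paragraph step once Theorem \ref{square-free module} is in hand; your detour does buy you pure semisimplicity of $R$ directly from (1), which the paper only recovers afterwards through Theorem \ref{square-free module}.
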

\begin{proof}
(1) $\Rightarrow$ (2). Assume that $M$ is a left $R$-module. Then  $M=\bigoplus_{\lambda \in \Lambda} K_\lambda$,  where  $\Lambda$ is an index set and   for each ${\lambda \in \Lambda}$,   $K_\lambda\leq M$ and    ${\rm soc}(K_\lambda) \neq 0$. Since every nonzero submodule of $K_\lambda$ is also  a direct sum of modules with nonzero cyclic socle,   every  nonzero submodule of $K_\lambda$ contains a simple module.  It follows that  ${\rm soc}(K_\lambda) \leq_e K_\lambda$  for each $\lambda \in \Lambda$. Thus, $K_\lambda$ is square-free for each $\lambda \in \Lambda$.  

(2) $\Leftrightarrow$ (5) $\Leftrightarrow$  (7).  By Theorem \ref{square-free module}.

    (7) $\Rightarrow$ (6).  It is clear, since by \cite[Theorem 4.4]{chase},  any ring  of finite  representation type  is  left  (right) pure semisimple.

(6) $\Rightarrow$ (5). It is clear, since by  \cite[Theorem 4.4]{chase},    every left pure semisimple  ring is   a left Artinian ring.

(5) $\Rightarrow$ (4). It is clear, since by \cite[Corollary 28.8]{Anderson-Fuller},  every left Artinian ring is a  left prefect ring. 

(4) $\Rightarrow$ (3). The proof  is direct.

(3) $\Rightarrow$ (1). By Theorem \ref{square-free module}, $R$ is a left pure semisimple ring, and so  by \cite[Theorem 4.4]{chase}, $R$   a left Artinian ring. Thus,  by \cite[Corollary 28.8]{Anderson-Fuller},  $R$ is a  left prefect ring, and so
 by Proposition \ref{Projective cover} (c), if $M$ is a finitely generated left $R$-module with square-free socle, then $soc(M)$ is cyclic. Thus, $R$ is a strongly left  co-K{\"o}the  ring.

In the next steps of proof, for the finite representation type ring $R$, by  Proposition \ref{equivalence},  we have the following assumptions.

\noindent (i) The left Auslander ring  of $R$ is  $T={\rm End}_R(U)$, where  $U=U_1\oplus\cdots\oplus U_n $ and  $\{U_1,\cdots,U_n\}$ \indent is   a complete set of representatives of the isomorphism classes of finitely
generated inde- \indent composable left $R$-modules.\\
(ii)   The  left Morita dual ring of $R$  is     $S=End(Q)$, where  $Q\cong E(S_1)\oplus\cdots\oplus E(S_m)$, where  \indent $\{S_i~|~1\leq i\leq m\}$ is a complete  set  of representatives of the isomorphism 
classes of  \indent simple left $R$-modules.\\
(iii)   $\{Hom_R(U_1,Q),\cdots,Hom_R(U_n,Q)\}$ is a complete  set of  representatives of the isomor \indent phism classes of finitely generated indecomposable  right  $S$-modules.

(7) $\Rightarrow$ (8).  By Proposition \ref{equivalence} (c) and  \cite[Theorem 3.4]{Behboodi3}, it is enough to show that each  of indecomposable right $S$-modules $Hom_R(U_1, Q),\cdots, Hom_R(U_n, Q)$, has a square-free top. Since  each $U_i$ has a  square-free socle,    by Proposition \ref{equivalence} (j),  each indecomposable right $S$-module ${\rm Hom}_R(U_i,Q)$   has a square-free top.  Thus,   left Morita dual ring $S=End(Q)$  of $R$  is  a strongly right  K{\"o}the ring.
     
   (8) $\Rightarrow$ (7). By \cite[Theorem 3.4]{Behboodi3}.

(7) $\Rightarrow$ (9).  Since every indecomposable left $R$-module has a  square-free   socle, by Proposition \ref{equivalence}, every indecomposable projective left $T$-module has a square-free   socle. Thus,  $T$ is a generalized left $QF$-2 ring,  by our definition. 

(9) $\Rightarrow$  (7).  By Proposition \ref{equivalence} (d),  $T\cong T^\prime$,  where   $T^\prime$ is the right  Auslander ring  of $S$. By Proposition \ref{equivalence} (c) and (e), it is enough to show that $Hom(U_{i} , Q)$ has a square-free top. By Proposition \ref{equivalence} (k), we have $top(Hom(U_{i} , Q)) \cong Hom_{R}(soc(U_i), Q)$ and since  each $top(Hom(U_{i} , Q))$  is   square-free, we conclude that  $soc(U_{i})$  is  also  square-free for each $i$. Thus,  
 $R$ is of finite  representation type and  every  $($finitely generated$)$ indecomposable  left $R$-module has  a square-free   socle.
 
(6) $\Rightarrow$  (8).  
By Proposition \ref{equivalence} (d),  $T\cong T^\prime$,  where  $T^\prime$ is the right  Auslander ring  of $S$. Since every indecomposable right $S$-module    has a square-free top,   every finitely generated indecomposable projective right $T$-module  has a square-free top, i.e., $T$ is  a generalized right  co-$QF$-2 ring.

(7) $\Leftrightarrow$  (11). It is by Proposition \ref{Projective cover} (b).
\end{proof}
 A ring $R$ is said to be {\it left duo} (resp., {\it  left quasi-duo}) if every left ideal (resp. maximal left ideal) of $R$ is an ideal. Obviously, left duo rings are left
quasi-duo. Other examples of left quasi-duo rings include, for instance, the commutative rings, the local rings, the rings in which every non-unit has a (positive) 
power  that  is a  central element, the endomorphism rings of uniserial modules, and the  power series rings and  the rings of upper triangular matrices over any of the above-mentioned  rings (see  \cite{Yu}).  It
is easy to see that if a ring $R$ is left duo (resp. left quasi-duo), so is any factor ring of $R$.  By a result of  \cite{Yu},  a ring $R$ is left quasi-duo if and only if  $R/J(R)$ is left quasi-duo and  if 
$R$ is left quasi-duo, then $R/J(R)$ is a subdirect product of division rings. The  converse is not true in general (see \cite[Example 5.2]{Lam}), but   the converse is true when $R$ has only a finite number of simple left $R$-modules (up to isomorphisms) (see \cite[Page 252]{Lam}.  Consequently,  a semilocal ring $R$ is left  (right) quasi-duo if and only if  $R/J(R)$  is a direct product of division rings.
 Note that any basic ring is a left and a right quasi-duo ring.

The following theorem shows that for a left quasi-duo ring $R$ the concepts of   ``left  co-K{\"o}the"   and  ``strongly left co-K{\"o}the"   coincide (this is the dual of \cite[Theorem 3.5]{Behboodi3}).

\begin{The}\label{left quasi duo left co-kothe}
The following conditions 
are equivalent for a  left quasi-duo ring $R$: \vspace*{0.2cm} \\
\indent {{\rm (1)}} $R$  is a left co-K{\"o}the ring.\\
\indent {{\rm (2)}} $R$  is a strongly left co-K{\"o}the ring.\\
\indent {{\rm (3)}}  $R$ is of finite representation type and every indecomposable module has a square-  \indent\indent free socle.\\
\indent {{\rm (4)}}  $R$ is of finite representation type and every indecomposable module has a cyclic  \indent\indent socle.\\
\indent {{\rm (5)}}  $R$ is  of finite  representation type  and  the left Morita dual ring of $R$  is  a   right K{\"o}the \indent\indent  ring.\\  
\indent {{\rm (6)}}  $R$ is  of finite  representation type  and  the left Morita dual ring of $R$  is  a strongly  \indent\indent right K{\"o}the  ring.\\  
\indent {{\rm (7)}} $R$ is  of finite representation  type and  the left Auslander ring  of $R$ is a  generalized  \indent\indent left   $QF$-2 ring \\
\indent {{\rm (8)}}  $R$ is of finite representation  type and the left Auslander ring  of $R$ is  a generalized  \indent\indent right   co-$QF$-2  ring. 
\end{The}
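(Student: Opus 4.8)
The plan is to reduce all eight conditions to the characterizations already established in Theorems~\ref{left co-Kothe} and~\ref{strongly left co-Kothe} together with the socle formula of Proposition~\ref{Projective cover}, letting the left quasi-duo hypothesis enter only to force $R$ to be basic. First I would note that each of the eight conditions makes $R$ of finite representation type: for (1) and (2) this is part of the equivalences in Theorems~\ref{left co-Kothe} and~\ref{strongly left co-Kothe}, and the remaining conditions state it outright. Hence $R$ is left Artinian, so semilocal and semiperfect, and being left quasi-duo its quotient $R/J$ is a direct product of division rings; therefore $R$ is basic, i.e.\ in the decomposition $R\cong\bigoplus_{i=1}^n (Re_i)^{(t_i)}$ of Proposition~\ref{Projective cover}(a) every $t_i$ equals $1$.

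The decisive step is the equivalence (3)$\Leftrightarrow$(4). By Proposition~\ref{Projective cover}(b), any finitely generated module $M$ has $soc(M)\cong\bigoplus_{i=1}^n (Re_i/Je_i)^{(u_i)}$, with $soc(M)$ cyclic precisely when $u_i\le t_i$ for all $i$ and square-free precisely when $u_i\in\{0,1\}$ for all $i$. Since $R$ has finite representation type every indecomposable module is finitely generated, and since $t_i=1$ the conditions $u_i\le t_i$ and $u_i\in\{0,1\}$ coincide; thus an indecomposable module has cyclic socle if and only if it has square-free socle. (The implication (3)$\Rightarrow$(4) is in fact free of the basic hypothesis by Proposition~\ref{Projective cover}(c); only the converse uses $t_i=1$.) Now (4)$\Leftrightarrow$(1) is exactly the equivalence of conditions (1) and (6) in Theorem~\ref{left co-Kothe}, while conditions (2), (3), (6), (7) and (8) of the present theorem are, respectively, conditions (1), (7), (8), (9) and (10) of Theorem~\ref{strongly left co-Kothe}, hence already known to be mutually equivalent. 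Together with (3)$\Leftrightarrow$(4) this chains (1), (2), (3), (4), (6), (7) and (8) into a single equivalence class.

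It remains to insert condition (5). One direction is formal: the three K{\"o}the/co-K{\"o}the classes are nested (as recalled in the Introduction and in \cite{Behboodi3}), so a strongly right K{\"o}the ring is a right K{\"o}the ring and (6)$\Rightarrow$(5). For the converse I would first show that the left Morita dual $S=\mathrm{End}_R(Q)$ is again basic. Writing the minimal injective cogenerator as $Q=\bigoplus_{i=1}^m E(S_i)$ with pairwise non-isomorphic indecomposable injective summands, the associated orthogonal idempotents $\pi_i\in S$ are primitive and give $S_S=\bigoplus_{i=1}^m \pi_i S$ with the $\pi_i S$ pairwise non-isomorphic indecomposable projectives (the duality $\mathrm{Hom}_R(-,Q)$ sends indecomposable injectives to indecomposable projectives, via Proposition~\ref{equivalence} and Remark~\ref{duality}); hence $S/J(S)$ is a product of division rings and $S$ is basic, so right quasi-duo. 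Applying the right-hand version of \cite[Theorems 3.4 and 3.5]{Behboodi3} to $S$ then gives that $S$ is right K{\"o}the if and only if it is strongly right K{\"o}the --- over the basic ring $S$ a square-free top is automatically cyclic, so the two module decompositions coincide --- which is (5)$\Leftrightarrow$(6). This closes the cycle.

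The step I expect to require the most care is the basic-ness of the Morita dual $S$ feeding (5)$\Leftrightarrow$(6): one must verify that the contravariant duality $\mathrm{Hom}_R(-,Q)$ transports the multiplicity-free decomposition of the minimal cogenerator $Q$ to a multiplicity-free projective decomposition of $S_S$, rather than merely knowing $S$ is right Artinian of finite representation type. Once $R$ --- and hence $S$ --- is seen to be basic, the rest is bookkeeping against Theorems~\ref{left co-Kothe} and~\ref{strongly left co-Kothe} and the socle computation in Proposition~\ref{Projective cover}.
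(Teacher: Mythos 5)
Your proposal is correct and follows the same overall strategy as the paper's proof: everything is reduced to Theorems \ref{left co-Kothe} and \ref{strongly left co-Kothe}, and the only genuinely new content is the equivalence ``cyclic socle $\Leftrightarrow$ square-free socle'' for indecomposables, which is exactly where the quasi-duo hypothesis must enter. The one real difference is the mechanism for that step. The paper argues directly on the socle: a square-free socle is a sum of pairwise non-isomorphic simples $R/\mathcal{M}_j$, hence isomorphic to $R/\bigcap_j\mathcal{M}_j$ and cyclic (no quasi-duo needed); conversely, a cyclic socle that is not square-free would have $R/P\oplus R/P$ as a cyclic factor with $P$ a two-sided maximal ideal, which is impossible because $R/P$ is then a division ring and that module is two-dimensional over it. You instead observe that a semilocal left quasi-duo ring is basic and invoke the multiplicity criterion of Proposition \ref{Projective cover}(b) with all $t_i=1$. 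Both arguments rest on the same underlying fact --- that $R/J$ is a product of division rings --- so this is a repackaging rather than a different idea, and both are valid. Where your write-up genuinely adds value is in (5) $\Leftrightarrow$ (6): the paper dispatches this with the single line ``by the equivalence of (1) and (2),'' whereas you supply the justification that line presupposes, namely that the left Morita dual $S=\mathrm{End}_R(Q)$ is basic (the duality $\mathrm{Hom}_R(-,Q)$ carries the multiplicity-free decomposition of the minimal cogenerator $Q$ into pairwise non-isomorphic indecomposable injectives to a multiplicity-free decomposition of $S_S$ into indecomposable projectives), hence right quasi-duo, so that the right-hand version of \cite[Theorem 3.5]{Behboodi3} identifies right K{\"o}the with strongly right K{\"o}the for $S$. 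That completes the cycle cleanly and makes explicit a step the paper leaves implicit.
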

\begin{proof}
(1) $\Rightarrow$ (2). By Theorem \ref{Theorem},  $R$ is  of finite representation type and  every  indecomposable  left $R$-module has a square-free  socle.  Let $M$ be an indecomposable Artinian module. By \cite [Proposition 31.3]{Wisbauer}, ${\rm Soc}(M) \cong \bigoplus_ {j=1}^{m} R/\mathcal{M}_{j}$. Since ${\rm Soc}(M)$ is square-free, for every $i \neq j$, $R/\mathcal{M}_{i}$ and $R/\mathcal{M}_{j}$ are non-isomorphic simple modules. Now by \cite [Proposition 9.13]{Wisbauer}, ${\rm Soc}(M) \cong R/(\bigcap_{i=1}^{m} \mathcal{M}_{j})$, and so it   is cyclic.

(2) $\Rightarrow$ (1). Since $R$ is a left quasi-duo ring,  every  simple left  $R$-module is isomorphic to $R/P,$ where $P$ is a maximal  left ideal of $R$, i.e.,   $R/P$ is a division ring.
 Let $M$ be an indecomposable module with a nonzero cyclic socle. By \cite [Proposition 31.3]{Wisbauer}, ${\rm Soc}(M) \cong R/P_1 \oplus \cdots R/P_n$. We claim that ${\rm Soc}(M)$ is square-free. Suppose not, then at least two of the simple modules $R/P_1 \oplus \cdots R/P_n$  are isomorphic. Let $P_1 = P_2 = P$ so, ${\rm Soc}(M) \cong R/P \oplus R/P \oplus \cdots R/P_n$.
Since ${\rm Soc}(M)$ is cyclic,  every factor module  of ${\rm Soc}(M)$ is  also cyclic. Thus,  $R/P \oplus R/P$ is cylic, but it is an  $R/P$-vector space and $dim_{R/P} R/P \oplus R/P = 2$. So,  it can not a cyclic $R/P$-moule, hence it can not a cyclic $R$-module, and this contradicts our assumption. Then ${\rm Soc}(M)$ is square-free and the  proof is complete.

(5) $\Leftrightarrow$ (6). By the equivalence of (1) and (2). 

(1) $\Leftrightarrow$ (4). By Theorem \ref{left co-Kothe}.

(2) $\Leftrightarrow$ (3), (3) $\Leftrightarrow$ (6), (6) $\Leftrightarrow$ (7) and (7) $\Leftrightarrow$ (8).  All are by  Theorem \ref{Theorem}.
\end{proof}

 \begin{Cor}\label{S is strongly right Kothe}
Let $R$ be a ring  and $S$ is a Morita ring of $R$ :  \vspace*{0.2cm} \\
\indent {{\rm (1)}} Basic ring $A_0$ of $R$ is a stongly left K{\"o}the ring and $S$ is a strongly right~K{\"o}the~ring.\\
\indent {{\rm (2)}} Any indecomposable left $R$-module has square-free top and square-free socle.
\end{Cor}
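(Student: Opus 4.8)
The plan is to read both parts off the equivalences of Theorem \ref{left quasi duo left co-kothe} together with the top--socle exchange supplied by the Morita duality of Proposition \ref{equivalence}. I take the standing hypothesis to be that $R$ is a left quasi-duo left co-K{\"o}the ring, that $S=\mathrm{End}_R(Q)$ is its left Morita dual, and that $A_0$ is the basic ring of $R$. By Theorem \ref{left quasi duo left co-kothe} the hypothesis already yields a large package: $R$ is of finite representation type, every indecomposable left $R$-module has square-free (equivalently cyclic) socle, and $S$ is a strongly right K{\"o}the ring. This immediately gives the second assertion of (1) and the square-free socle half of (2). Since $A_0$ is basic it is itself left (and right) quasi-duo, and being Morita equivalent to $R$ it is again of finite representation type with every indecomposable having square-free socle; hence all equivalent conditions of Theorem \ref{left quasi duo left co-kothe} hold for $A_0$ too, which lets me reduce statements about $R$ to statements about the basic ring.

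The remaining content---that $A_0$ is strongly left K{\"o}the and that every indecomposable left $R$-module also has square-free top---I would reduce to one socle/top statement on the dual side. By Proposition \ref{equivalence}(k), for a finitely generated left $R$-module $X$ one has $\mathrm{top}(\mathrm{Hom}_R(X,Q))\cong \mathrm{Hom}_R(\mathrm{soc}(X),Q)$, and dually by (j) $\mathrm{soc}(\mathrm{Hom}_R(X,Q))\cong \mathrm{Hom}_R(\mathrm{top}(X),Q)$; since $\mathrm{Hom}_R(-,Q)$ is a duality carrying indecomposables to indecomposables and preserving square-freeness, an indecomposable left $R$-module $U$ has square-free top if and only if the indecomposable right $S$-module $\mathrm{Hom}_R(U,Q)$ has square-free socle. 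Thus the top half of (2) is exactly the assertion that $S$ is a strongly right co-K{\"o}the ring, and over the basic ring $A_0$---where a square-free semisimple module is cyclic by the Chinese-remainder argument of \cite[Proposition 9.13]{Wisbauer} used already in the proof of Theorem \ref{left quasi duo left co-kothe}---this coincides with $A_0$ being strongly left K{\"o}the. Both outstanding claims therefore collapse to one: the Morita dual is simultaneously strongly right K{\"o}the and strongly right co-K{\"o}the.

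The main obstacle is precisely this last point, namely promoting the square-free socle condition to a square-free top condition on the \emph{same} indecomposables, and it is here that the quasi-duo hypothesis and Ringel's theorem do the real work. My plan is to transport the strongly right K{\"o}the property of $S$ back through the quasi-duo K{\"o}the/co-K{\"o}the correspondence of \cite[Theorems 3.4 and 3.5]{Behboodi3}, which for a quasi-duo ring matches a top-decomposition condition on one side of the Morita duality with a socle-decomposition condition on the other; applied to the basic, hence quasi-duo, ring $A_0$ this should force indecomposables to have square-free tops as well as square-free socles, delivering $A_0$ strongly left K{\"o}the and $S$ strongly right co-K{\"o}the at once. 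I expect the delicate part to be verifying that quasi-duo-ness genuinely couples the two sides in the way Ringel's theorem \cite[Theorem 1.6]{Ringel} requires, since left co-K{\"o}the alone only governs socles; I would therefore check this symmetrization carefully, reading the standing hypothesis if necessary as that of a two-sided co-K{\"o}the ring, whose right co-K{\"o}the half supplies the missing square-free top data directly.
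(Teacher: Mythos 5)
Your proposal does not establish the statement as written; the root problem is that you have changed what is being asserted. The corollary is an \emph{equivalence} between (1) and (2) (for $R$ implicitly of finite representation type, $S$ its left Morita dual), and the two clauses of (1) match the two clauses of (2) separately: ``$A_0$ is strongly left K{\"o}the'' carries the square-free \emph{top} condition, while ``$S$ is strongly right K{\"o}the'' carries the square-free \emph{socle} condition. You instead adopt as a standing hypothesis that $R$ is a left quasi-duo left co-K{\"o}the ring and try to deduce both (1) and (2) from it. No such hypothesis appears in the statement, and the plan cannot close: a (strongly) left co-K{\"o}the hypothesis controls only the socles of the indecomposables, and the square-free-top condition is logically independent of the square-free-socle condition --- it is precisely the content of the clause about $A_0$, which in your reading is left with nothing to be equivalent to. You acknowledge this yourself when you propose to ``read the standing hypothesis if necessary as that of a two-sided co-K{\"o}the ring''; that is an admission that the argument does not go through without altering the statement.

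The intended proof splits cleanly into two independent equivalences. By \cite[Theorem 3.4]{Behboodi3}, the basic ring $A_0$ is strongly left K{\"o}the if and only if it has finite representation type and every indecomposable left $A_0$-module has square-free top; since square-freeness of the top is preserved by the Morita equivalence between $A_0$-Mod and $R$-Mod, this is equivalent to every indecomposable left $R$-module having square-free top (passing to the basic ring is what makes ``square-free top'' and ``square-free cyclic top'' coincide, because $A_0/J(A_0)$ is a direct sum of pairwise non-isomorphic simples). Independently, the equivalence of (7) and (8) in Theorem \ref{strongly left co-Kothe} gives that the Morita dual $S$ is strongly right K{\"o}the if and only if every (finitely generated) indecomposable left $R$-module has square-free socle; this is where the duality identities of Proposition \ref{equivalence} (j) and (k) that you invoke actually do their work. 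Conjoining the two equivalences yields (1) $\Leftrightarrow$ (2). The quasi-duo theorem and Ringel's theorem play no role in proving this corollary; they only explain in what sense the corollary generalizes \cite[Theorem 1.6]{Ringel}.
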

\begin{proof}
(1) $\Leftrightarrow$ (2). By \cite[Theorem 3.4]{Behboodi3} and Theorem \ref{Theorem}. \end{proof}
\begin{The}\label{Lem}
Let $A$ is  a finite dimensional $K$-algebra. If  $A$ is a strongly left co-K{\"o}the  ring then 
 $A$ is a right K{\"o}the  ring.
\end{The}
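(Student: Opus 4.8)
The plan is to transport the hypothesis from the left side to the right side by means of the ordinary $K$-linear duality, which is available precisely because $A$ is finite dimensional over $K$. First I would record, via the equivalence (1) $\Leftrightarrow$ (7) of Theorem \ref{strongly left co-Kothe}, that a strongly left co-K{\"o}the ring $A$ is of finite representation type and that every finitely generated indecomposable left $A$-module has a square-free socle. The whole argument then rests on the standard duality $D={\rm Hom}_K(-,K)\colon A\text{-mod}\to \text{mod-}A$, which for a finite dimensional algebra is an exact contravariant equivalence between the finitely generated left and the finitely generated right $A$-modules; this is the feature that the general theorems of the paper had to manufacture through a Morita dual, but which here comes for free.

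Next I would use three formal properties of $D$. It gives a bijection between the (finitely many) isomorphism classes of finitely generated indecomposable left $A$-modules and those of finitely generated indecomposable right $A$-modules, so $A$ is of finite representation type on the right as well. Being an exact contravariant equivalence it reverses the submodule lattice, hence carries the socle to the top: for each finitely generated left module $M$ one has ${\rm top}(D(M))\cong D({\rm soc}(M))$. Finally, $D$ sends simple modules to simple modules and non-isomorphic simples to non-isomorphic simples, so by Lemma \ref{square-free} it preserves square-freeness of a semisimple module (square-freeness of the socle or top being exactly the statement that it is a direct sum of pairwise non-isomorphic simples). Combining these: every finitely generated indecomposable right $A$-module is isomorphic to $D(M)$ for some finitely generated indecomposable left $A$-module $M$, and since ${\rm soc}(M)$ is square-free, ${\rm top}(D(M))\cong D({\rm soc}(M))$ is square-free.

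It then remains to upgrade ``square-free top'' to ``cyclic top.'' Here I would argue, as in Proposition \ref{Projective cover}, that writing ${\rm top}(N)\cong\bigoplus_{i=1}^n (e_iR/e_iJ)^{(s_i)}$ in the right-module form of part (a), the top is a quotient of $R/J$ and so is cyclic as soon as each isotypic multiplicity $s_i$ is at most $t_i$; since a square-free top has all $s_i\in\{0,1\}$ and each $t_i\ge 1$, square-freeness of the top forces it to be cyclic. Consequently every finitely generated indecomposable right $A$-module has a cyclic top, and by \cite[Theorem 2.6]{Behboodi3} this, together with finite representation type, is exactly the statement that $A$ is a right K{\"o}the ring.

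The main obstacle --- really the only nontrivial point --- is to be sure that the duality behaves as claimed: I must check not merely that $D$ is a duality, but that it genuinely exchanges socle and top and respects the decomposition of a semisimple module into isotypic components, so that ``square-free'' is preserved. Both follow from $D$ being an exact contravariant equivalence at the finitely generated level together with the characterization of square-free semisimple modules in Lemma \ref{square-free}. I would also note in passing that the same argument in fact delivers square-free (not merely cyclic) tops on the right, so that $A$ is even a strongly right K{\"o}the ring, of which being a right K{\"o}the ring is the special case asserted.
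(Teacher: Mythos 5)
Your proof is correct and follows essentially the same route as the paper's: both arguments rest on the $K$-linear duality $\mathrm{Hom}_K(-,K)$ between finitely generated left and right $A$-modules, which carries the square-free socles of the indecomposable left modules to square-free, hence cyclic, tops of the indecomposable right modules. The only difference is that the paper delegates the final step (square-free socle implies the $K$-dual is cyclic) to Ringel's Proposition 1.5, whereas you verify it directly through the socle--top exchange together with the multiplicity argument of Proposition \ref{Projective cover}, and then invoke the characterization of right K{\"o}the rings from \cite{Behboodi3}.
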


\begin{proof}
 By  Theorem  \ref{square-free module},  $A$ is  of finite representation type and  every  indecomposable  left   $A$-module has square-free  socle. Thus every  indecomposable  left $A$-module has squar-free socle and  so it is suffices  to show that  every indecomposable  right $A$-module  cyclic. 
 
 Let  $M$ a finitely generated  left $A$-module and   $(0)=M_0\lneq M_1\lneq ....\lneq M_l=M$ be any composition series for $M$. 
  We choose a fixed ordering  $S_1, ...,~S_n$  of the simple  $A$-modules,  and denote by $(\underline{dim}~M)_i $ the number of composition factors of $M$ isomorphic to $S_i$ , this number is independent of the given composition series (theorem of Jordan-H{\"o}tlder).  In particular, we always have the indecomposable direct summands of the left module $_A A$ , we denote representatives of their isomorphism classes by $P_1,...,P_n $. Since  the series $(0)\leq P_i\leq A$ can be extend  to a  composition series,    the  factor module $top(P_i)=P_i/Rad(P_i)$ is simple. We can assume that     $top(P_i) \cong  S_i$ where 
$1 < i < n$. Thus, $_A A = \bigoplus (P_i)^ {(t_i)}$,   where  $t_i\in N$ for each $i$   and  $(P_i)^ {(t_i)}$ denotes the 
direct sum of $t_i$ copies of $P_i$.  \\
 \indent Now  let $M_1,...,M_m$  be the indecomposable left $A$-modules. 
 For each  module $M_j$,  its dual module  $Hom_K(M_j,K )$   is a right  $A$-module and  $Hom_K(M_1,K ),...,Hom_K(M_m,K )$ are the indecomposable right $A$-modules. Since every  indecomposable left $A$-module has square-free  socle,    $(\underline{dim}~SocM_j)_i =1$  for all $i$,  and so  by    \cite[Proposition 1.5]{Ringel}),    $Hom_K(M_i,K ) $  is cyclic  for all $i$ and so   $A$ is a right K{\"o}the  ring. 
  \end{proof}
\begin{Lem}{\rm({\cite[Theorem 1.6]{Ringel}})}.\label{basic Kawada}
The following conditions now obviously are equivalent for a finite-dimensional algebra $A$:\\
\indent {{\rm (1)}} Basic ring $A_0$ of $R$ is a K{\"o}the algebra.\\
\indent {{\rm (2)}} Any indecomposable $A$-module has square-free top and square-free socle.
\end{Lem}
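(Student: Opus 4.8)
The plan is to transport condition (2) to the basic algebra $A_{0}$ by Morita equivalence and then recognise, on $A_{0}$, the square-free conditions as the left- and right-hand K{\"o}the properties. \emph{First}, I would note that condition (2) is Morita invariant. A Morita equivalence $A\text{-Mod}\simeq A_{0}\text{-Mod}$ restricts to a bijection between indecomposable modules and is an isomorphism of submodule lattices; hence it preserves simple subfactors, simple submodules, and their isomorphism types. In particular the multiplicity of each simple in the socle, and in the top, of an indecomposable is unchanged. Therefore every indecomposable $A$-module has square-free socle and square-free top precisely when every indecomposable $A_{0}$-module does, and it suffices to prove the equivalence for the basic algebra $A_{0}$.

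\emph{Second}, I would use that a basic finite-dimensional algebra is quasi-duo. By \cite[Theorem 3.5]{Behboodi3} the notions ``left K{\"o}the'' and ``strongly left K{\"o}the'' coincide for $A_{0}$, and by \cite[Theorem 3.4]{Behboodi3} the latter amounts to $A_{0}$ being of finite representation type with every indecomposable left module having a square-free top (over a basic algebra a semisimple module is cyclic iff it is square-free, so ``cyclic top'' and ``square-free top'' agree here). The same statements on the right give: $A_{0}$ is right K{\"o}the iff every indecomposable right $A_{0}$-module has a square-free top. Finally the exact $K$-duality $\mathrm{Hom}_{K}(-,K)\colon A_{0}\text{-mod}\to\text{mod-}A_{0}$ is a bijection on indecomposables that interchanges top and socle, so ``every indecomposable right module has square-free top'' is the same as ``every indecomposable left module has square-free socle''.

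\emph{Combining}, the algebra $A_{0}$ is a K{\"o}the algebra (both left and right K{\"o}the) if and only if every indecomposable left $A_{0}$-module has a square-free top and a square-free socle; by the Morita reduction this is exactly condition (2) for $A$, proving (1)$\Leftrightarrow$(2). Equivalently, one may read this off directly from Corollary \ref{S is strongly right Kothe}, whose clause (1) encodes that $A_{0}$ is strongly left K{\"o}the and the Morita dual $S$ is strongly right K{\"o}the; the quasi-duo and duality identifications above rewrite clause (1) as ``$A_{0}$ is a K{\"o}the algebra''.

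\emph{Main obstacle.} The delicate point is the bookkeeping between sides: one must confirm that the square-free-\emph{socle} half of (2) is governed by the \emph{right}-hand K{\"o}the condition, which rests on Ringel's criterion \cite[Proposition 1.5]{Ringel} that $\mathrm{Hom}_{K}(M,K)$ is cyclic exactly when $\mathrm{soc}(M)$ is multiplicity-free, together with the correct matching of the Morita dual $S$ to the right module category of $A_{0}$. Once these identifications are pinned down, the Morita invariance of square-freeness and the quasi-duo coincidence of ``K{\"o}the'' with ``strongly K{\"o}the'' are routine consequences of the cited results, and the equivalence follows.
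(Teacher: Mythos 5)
The paper offers no proof of this statement at all: it is quoted verbatim as Ringel's Theorem 1.6 and used as a black box (the surrounding text only uses it, together with \cite[Theorem 3.5]{Behboodi3} and Corollary \ref{S is strongly right Kothe}, to derive Corollary \ref{S morita R is stongly right Kothe}). Your proposal therefore does something genuinely different: it re-derives Ringel's theorem from the machinery of this paper. The route is sound and is in fact the one the authors implicitly have in mind for their generalization: (i) square-freeness of top and socle of indecomposables is Morita invariant, so one may pass to $A_{0}$; (ii) $A_{0}$ is basic, hence quasi-duo, so by \cite[Theorem 3.5]{Behboodi3} ``K\"othe'' and ``strongly K\"othe'' coincide on each side, and by \cite[Theorem 3.4]{Behboodi3} the strong left (resp.\ right) K\"othe property is ``finite representation type plus square-free tops of indecomposables'' on that side; (iii) the $K$-duality $\mathrm{Hom}_{K}(-,K)$ converts ``square-free top on the right'' into ``square-free socle on the left.'' Your observation that over a basic algebra a semisimple module is cyclic iff it is square-free (Proposition \ref{Projective cover}(b) with all $t_{i}=1$) correctly reconciles the ``cyclic top'' and ``square-free top'' formulations. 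What your route buys is a self-contained proof inside the paper's framework; what the paper's route buys is brevity, at the cost of leaning on an external source.

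One point deserves to be made explicit in the direction (2)$\Rightarrow$(1). The characterization you invoke from \cite[Theorem 3.4]{Behboodi3} packages the K\"othe property as ``finite representation type \emph{and} square-free tops,'' so you must first get finite representation type out of condition (2) alone. This is not automatic from the statement as you use it, but it does follow: an indecomposable with square-free top is an epimorphic image of $P_{1}\oplus\cdots\oplus P_{n}$ (one copy of each indecomposable projective), and one with square-free socle embeds in $E(S_{1})\oplus\cdots\oplus E(S_{m})$; either bound on lengths gives bounded representation type, hence finite representation type by \cite[Proposition 54.3]{Wisbauer} --- exactly the argument the paper runs in the proof of Theorem \ref{square-free module}, (2)$\Rightarrow$(3). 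With that sentence added, your proof is complete. The closing aside that the lemma can be ``read off'' from Corollary \ref{S is strongly right Kothe} is weaker than the main argument, since it silently identifies the Morita dual ring $S$ with $A_{0}^{\mathrm{op}}$; your explicit $K$-duality argument is the cleaner way to handle the left/right bookkeeping and should be kept as the primary line.
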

Thus by \cite[Theorem 3.5]{Behboodi3},  for any basic ring $R$ the concepts of stongly K{\"o}the and K{\"o}the concide, and by Corollary \ref{S is strongly right Kothe}, we have the following result that is generalization of Ringle Theorem:
 \begin{Cor}\label{S morita R is stongly right Kothe}
Let $R$ be a ring and  $S$ is a Morita ring of $R$ such that $S \cong R_R$: \vspace*{0.2cm} \\
\indent {{\rm (1)}} $R$  is a stongly K{\"o}the ring.\\
\indent {{\rm (2)}} Any indecomposable left $R$-module has square-free top and square-free socle.
\end{Cor}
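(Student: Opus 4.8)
The plan is to deduce the statement directly from Corollary~\ref{S is strongly right Kothe}, which already establishes that condition~(2) is equivalent to the \emph{mixed} assertion that the basic ring $A_0$ of $R$ is strongly left K{\"o}the \emph{and} $S$ is strongly right K{\"o}the. Since both corollaries share the identical condition~(2), the only thing left to prove is that, under the standing hypothesis that $S$ is a Morita dual ring of $R$ with $S\cong R_R$, this mixed condition coincides with ``$R$ is a strongly K{\"o}the ring,'' i.e.\ with ``$R$ is both strongly left and strongly right K{\"o}the.'' I would first observe that the very existence of the Morita dual $S$ already forces $R$ to be of finite representation type, so this finiteness is built into the hypotheses and need not be assumed separately.

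The core of the argument consists of two transport principles. For the left-hand factor I would use that, by \cite[Theorem~3.4]{Behboodi3}, being strongly left K{\"o}the is equivalent to being of finite representation type together with every (finitely generated) indecomposable left module having square-free top; as both of these are Morita-invariant properties and the basic ring $A_0$ is Morita equivalent to $R$, it follows that $A_0$ is strongly left K{\"o}the if and only if $R$ is strongly left K{\"o}the. For the right-hand factor, the isomorphism $S\cong R_R$ identifies the category of right $S$-modules with the category of right $R$-modules, so that $S$ is strongly right K{\"o}the if and only if $R$ is strongly right K{\"o}the. Combining the two equivalences converts the mixed condition of Corollary~\ref{S is strongly right Kothe} into ``$R$ is strongly left K{\"o}the and strongly right K{\"o}the,'' which is precisely statement~(1); chaining this with the equivalence already supplied by Corollary~\ref{S is strongly right Kothe} yields (1)$\Leftrightarrow$(2).

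It is worth making explicit why the \emph{socle} half of~(2) encodes the \emph{right}-handed K{\"o}the property, since this is where the self-duality does the real work: by Proposition~\ref{equivalence}(j),(k) the duality $\mathrm{Hom}_R(-,Q)$ interchanges top and socle and carries indecomposable left $R$-modules bijectively onto indecomposable right $S$-modules, so ``every indecomposable left $R$-module has square-free socle'' translates into ``every indecomposable right $S$-module has square-free top,'' i.e.\ (through $S\cong R_R$) into the strong right K{\"o}the property of $R$, while the top half of~(2) records the strong left K{\"o}the property. This viewpoint also explains why the result generalizes Ringel's Lemma~\ref{basic Kawada}, since for a basic ring the strongly K{\"o}the and K{\"o}the notions coincide by \cite[Theorem~3.5]{Behboodi3}.

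I expect the main obstacle to be the careful handling of the hypothesis $S\cong R_R$: one must verify that the ring-level identification of $S$ with $R$ is compatible both with the module categories involved and with the top/socle interchange coming from Proposition~\ref{equivalence}, and that square-freeness—being a condition on the submodule lattice and on the isomorphism types of simple composition factors—is genuinely preserved along the Morita equivalence $A_0\sim R$ and reflected along the duality $\mathrm{Hom}_R(-,Q)$. Once these compatibilities are pinned down, the remaining steps are purely formal bookkeeping.
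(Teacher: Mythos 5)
Your proposal is correct and follows essentially the same route as the paper, which likewise derives the statement directly from Corollary~\ref{S is strongly right Kothe} together with the identifications $A_0\sim R$ (Morita equivalence, under which the strong left K\"othe property is invariant by its characterization via square-free tops of indecomposables) and $S\cong R_R$ (which converts the strong right K\"othe property of $S$ into that of $R$). The paper gives only a one-line pointer where you spell out the bookkeeping, but the underlying argument is the same.
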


\section{\bf  Characterizations of very strongly left co-K{\"o}the rings}

An  $R$-module $N$ is called {\it uniserial}  if its submodules are linearly ordered
by inclusion. If $_RR$ (resp. $R_R$) is uniserial we call $R$ {\it  left}  ({\it right}) {\it uniserial}. Note that finitely generated uniserial modules  are in particular local module, and local modules are cyclic modules.
A {\it uniserial ring}  is a ring which is both left and right uniserial. Note that  commutative uniserial rings are also known as valuation rings.
In  addition,  we call an $R$-module $M$  {\it serial}  if it is a direct sum of uniserial modules.
The ring $R$ is called  {\it left  (right) serial}  if $_RR$ (resp. $R_R$) is a serial module.
We say $R$ is  {\it serial}  if $R$ is left and right serial.

An Artinian ring $R$ is said to have {\it left  colocal type} if every finitely generated indecomposable left $R$-module has a simple socle. Such rings and algebras have been investigated by several authors including Makino \cite{Makino}, Sumioka \cite{Sumioka1, Sumioka2}, Tachikawa \cite{Tachikawa1, Tachikawa2}, and a special case by Fuller \cite{Fuller3}. 
Artinian serial rings clearly have finite representation type,  as well as right and left colocal type.

Extending modules form a natural class of modules which is more general than the class of injective modules,  but retains many of its desirable properties.  Recall that a module $M$ is called an {\it extending module} (or, {\it $CS$-module}) if every submodule is essential in a direct summand of $M$. Extending modules generalize (quasi-)injective, semisimple, and
uniform modules and have been extensively studied over the last few decades (see \cite{Dung}
for a detailed account of such modules). In \cite[Theorem 1]{Er}, Er  proved that   the rings whose right modules are direct sums of extending modules coincide with  those that have finite representation type and right colocal type.
An $R$-module $N$ is called {\it co-cyclic}  if $N$ has a simple submodule $K$ which is contained in every nonzero submodule of $N$.

The following is our characterization of very strongly left  co-K{\"o}the  rings. In fact, 
the next  theorem characterizes completely those rings of  finite representation type for which any  $($finitely generated$)$ indecomposable left  module   has a simple socle.
Among other  characterizations,  we show  that a ring $R$ is a very strongly left  co-K{\"o}the  ring if and only if every left  $R$-module is a direct sum of extending modules, if and only if,  every left $R$-module is a direct sum of uniform modules. Moreover, we show  that   any very strongly left  co-K{\"o}the  ring $R$  is  an Artinian  left serial  ring.

\begin{The}\label{very strongly left co-Kothe}
The following conditions are equivalent for a ring  $R$:\vspace*{0.2cm} \\
\indent {{\rm (1)}}  $R$ is a very strongly left  co-K{\"o}the  ring.\\
\indent {{\rm (2)}}   Every  left $R$-module is a direct sum of co-cyclic modules.\\
\indent {{\rm (3)}}  Every  nonzero left $R$-module is a direct sum of  modules with nonzero top and  \indent\indent simple socle.\\
\indent {{\rm (4)}}  $R$  is a  left  perfect ring,   and every left  $R$-module is a direct sum of  modules with \indent\indent  simple  socle.\\
\indent {{\rm (5)}}  $R$ is a  left Artinian ring,   and  every   left $R$-module  is a direct sum of modules with \indent\indent simple  socle.\\
\indent {{\rm (6)}} $R$ is a  left pure semisimple ring,   and  every   left $R$-module  is a direct sum of modules  \indent\indent  with simple socle.\\
\indent {{\rm (7)}}  $R$ is  of finite representation type and  every  $($finitely generated$)$ indecomposable left  \indent\indent $R$-module  has a simple socle.\\
\indent {{\rm (8)}} Every left  $R$-module is a direct sum of extending modules.\\
\indent {{\rm (9)}} Every left $R$-module is a direct sum of uniform modules.\\
\indent {{\rm (10)}}  $R$ is  of finite representation  type,   and the  left Morita dual ring of $R$  is  a  very  \indent\indent strongly  right K{\"o}the  ring.\\  
\indent {{\rm (11)}} $R$ is   of finite representation  type,  and  the  left Auslander ring $T$ of $R$ is a   left   \indent\indent $QF$-2 ring \\
\indent {{\rm (12)}}  $R$ is   of finite representation  type,  and the  left Auslander ring  $T$ of $R$ is  a right \indent\indent co-$QF$-2  ring.\\ 
\indent {{\rm (13)}}   $R$ is of finite representation type,   and   for each  indecomposable left   $R$-module
\indent\indent  $U$,  $soc(U) \cong Re_i/Je_i$  where  $e_i \in \{e_1,\cdots, e_n\}$ and $\{e_1,\cdots, e_n\}$ is a basic set of \indent\indent  primitive idempotents for $R$.  

If these assertions hold, then $R$ is  an Artinian  left serial  ring.
\end{The}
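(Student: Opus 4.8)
The plan is to organize the thirteen conditions around condition (7) --- finite representation type together with every finitely generated indecomposable left module having simple socle --- and to run a main cycle through it, treating the remaining items as reformulations. First I would record a standing observation: over a left Artinian ring every nonzero module has essential socle, and for such a module ``${\rm soc}$ is simple'' is equivalent to ``uniform'' and to ``co-cyclic'' (a simple essential socle is contained in every nonzero submodule, and conversely a distinguished simple submodule is the whole socle). So (1), (2), (9) will coincide once the ring is shown left Artinian.

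For the backbone (1) $\Rightarrow$ (7) $\Rightarrow$ (6) $\Rightarrow$ (5) $\Rightarrow$ (4) $\Rightarrow$ (3) $\Rightarrow$ (1): a module with simple socle has nonzero finitely generated socle, so (1) feeds Theorem \ref{left pure semisimple} and makes $R$ left pure semisimple, hence left Artinian by \cite[Theorem 4.4]{chase}. Then, exactly as in the proofs of Theorems \ref{square-free module} and \ref{left co-Kothe}, I would use that there are finitely many simples $S_1,\dots,S_k$ (\cite[Proposition 32.4]{Wisbauer}), that each $E(S_i)$ is indecomposable and, over a pure semisimple ring, finitely generated, hence of finite length; a module $K$ with simple socle embeds in $E({\rm soc}(K))=E(S_i)$, so ${\rm length}(K)\le m:=\max_i {\rm length}(E(S_i))$. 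Thus finitely generated indecomposables have bounded length, giving finite representation type (\cite[Proposition 54.3]{Wisbauer}), and each such module, being its own summand, has simple socle --- this is (7). The descending implications are routine: finite representation type $\Rightarrow$ pure semisimple (\cite[Theorem 4.4]{chase}) $\Rightarrow$ left Artinian $\Rightarrow$ left perfect (\cite[Corollary 28.8]{Anderson-Fuller}), and under any of these the pure semisimple decomposition into finitely generated indecomposables supplies the required direct-sum form; (3) $\Rightarrow$ (1) merely forgets the ``nonzero top.''

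The remaining equivalences I would obtain by transport. Over the (now established) left Artinian ring the finitely generated indecomposables are co-cyclic and uniform, giving (7) $\Rightarrow$ (2),(9); conversely a uniform module is indecomposable, so (9) makes every module a direct sum of indecomposables and Lemma \ref{Zimmermann} returns pure semisimplicity, after which the uniform summands acquire simple socle. For (8) I would invoke the left-hand analogue of Er's theorem \cite[Theorem 1]{Er}: a uniform module is trivially extending, so (9) $\Rightarrow$ (8), while (8) $\Rightarrow$ (7) is exactly ``direct sums of extending $=$ finite representation type $+$ left colocal type.'' Conditions (10)--(13) are read off Propositions \ref{equivalence} and \ref{Projective cover}: by Proposition \ref{equivalence}(k) the duality ${\rm Hom}_R(-,Q)$ sends an indecomposable $U$ with simple socle to an indecomposable right $S$-module with simple top and conversely, so (7) $\Leftrightarrow$ [every indecomposable right $S$-module has simple top] $\Leftrightarrow$ (10) via the characterization of very strongly right K\"othe rings in \cite[Theorem 4.7]{Behboodi3}; Proposition \ref{equivalence}(e),(g) identifies the indecomposable projective left $T$-modules with the ${\rm Hom}_R(U,M)$ and matches simple socles, giving (7) $\Leftrightarrow$ (11); the isomorphism $T\cong T'$ of Proposition \ref{equivalence}(d) with the top-version of the same correspondence gives (7) $\Leftrightarrow$ (12); and Proposition \ref{Projective cover}(b), writing $soc(U)\cong\bigoplus (Re_i/Je_i)^{(u_i)}$, shows ``simple socle'' is precisely ``exactly one $u_i=1$,'' i.e.\ (13).

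For the concluding assertion I would show each indecomposable projective $Re_i$ is uniserial. The key point is that a proper quotient $Re_i/N$ must have $N\subseteq Je_i$ (else $N+Je_i=Re_i$ and Nakayama forces $Re_i/N=0$), hence is local, hence indecomposable, hence --- by (7) --- has simple socle; the same holds for $Re_i$ itself. Thus \emph{every} quotient of $Re_i$ has simple socle, and an induction on length shows that a finite-length module all of whose quotients have simple socle is uniserial: ${\rm soc}(M)=S$ is then the unique simple submodule, so it lies in every nonzero submodule, $M/S$ is uniserial by induction, and the submodules of $M$ are exactly $0$ together with the preimages of the (linearly ordered) submodules of $M/S$. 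Hence ${}_RR=\bigoplus Re_i^{(t_i)}$ is serial and $R$ is left Artinian; finite representation type (Morita dual to the right Artinian $S$, with Artinian Auslander ring $T$) supplies the two-sided Artinian property. I expect the two load-bearing steps to be the finite-length bound that extracts finite representation type from (1) --- where pure semisimplicity, through finite generation of the $E(S_i)$, is indispensable --- and this final ``every quotient has simple socle $\Rightarrow$ uniserial'' observation, which is what converts the colocal hypothesis into seriality and without which (as the tame algebra $k[x,y]/(x^2,y^2)$ shows) simple socle alone would not suffice.
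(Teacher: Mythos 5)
Your proposal is correct, and for the bulk of the theorem it follows the same architecture as the paper: hub everything at condition (7), extract pure semisimplicity from (1) via Theorem \ref{left pure semisimple} and Lemma \ref{Zimmermann}, get left Artinian from \cite[Theorem 4.4]{chase}, bound lengths of indecomposables by ${\rm length}(E(S_i))$ to invoke \cite[Proposition 54.3]{Wisbauer}, use Er's theorem \cite[Theorem 1]{Er} for (8) and (9), transport (10)--(12) through the Morita dual $S$ and the Auslander ring $T$ via Proposition \ref{equivalence} (in particular parts (g), (j), (k) and $T\cong T'$), and read (13) off Proposition \ref{Projective cover}(b). The two places you depart are both sound and worth noting. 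First, you prove the backbone (1)$\Rightarrow$(7)$\Rightarrow\cdots\Rightarrow$(3)$\Rightarrow$(1) directly by the length-bound argument, whereas the paper disposes of (2)--(7) in one line by citing its strongly-co-K\"othe theorem (``simple socle is square-free socle''); since that cited theorem is itself proved by exactly your argument, this is inlining rather than a new route, and arguably your version is the more careful one, as the paper's reduction is stated without checking that the ``simple'' refinements of the cycle really carry over. Second, and more substantively, for the concluding assertion that $R$ is Artinian left serial the paper simply cites \cite[Theorem 1]{Er}, while you give a self-contained proof: every proper quotient $Re_i/N$ is local (as $N\subseteq Je_i$ by Nakayama), hence indecomposable, hence by (7) has simple socle, and then the induction ``finite-length module all of whose quotients have simple or zero socle is uniserial'' (unique simple socle lies in every nonzero submodule, pass to $M/S$) yields each $Re_i$ uniserial. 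This argument is correct --- it is the standard characterization of uniserial modules by socles of factor modules --- and it buys independence from the unstated serial conclusion inside Er's theorem, at the cost of a slightly hand-waved parenthetical about two-sided Artinianness (your remark that $R\cong {\rm End}(V_T)$ with $T$ Artinian does patch this, since corners of Artinian rings are Artinian). Your closing observation that $k[x,y]/(x^2,y^2)$ shows simple socle of $M$ alone, without the condition on all quotients, cannot force uniseriality is apt. Minor citation slip only: the characterization of very strongly right K\"othe rings used for (10) is \cite[Theorem 4.1]{Behboodi3}, not Theorem 4.7.
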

\begin{proof} 
The equivalence part (2) to (7) is by Theorem \ref{Theorem}, since the module with simple socle is a module with square-free socle. 

(1) $\Leftrightarrow$ (2). It is clear, since a left module is co-cyclic if and only if it has an essential simple socle.
        
(7)  $\Leftrightarrow$ (8)  $\Rightarrow$  (9). It  is by \cite[Theorem 1]{Er}.

(9) $\Rightarrow$  (8).  It is clear, since every uniform module is extending.

In the next steps of proof, for the finite representation type ring $R$, by  Proposition  \ref{equivalence},  we have the following assumptions.

\noindent (i) The left Auslander ring  of $R$ is  $T={\rm End}_R(U)$, where  $U=U_1\oplus\cdots\oplus U_n $ and  $\{U_1,\cdots,U_n\}$ \indent is   a complete set of representatives of the isomorphism classes of finitely
generated inde- \indent composable left $R$-modules.\\
(ii)   The  left Morita dual ring $R$ is  $S=End(Q)$, where  $Q\cong E(S_1)\oplus\cdots\oplus E(S_m)$, where \indent $\{S_i~|~1\leq i\leq m\}$ is a completet set  of representatives of the isomorphism 
classes of  \indent simple  left $R$-modules.\\
(iii)   $\{Hom_R(U_1,Q),\cdots,Hom_R(U_n,Q)\}$ is a complete  set of  representatives of the isomor-  \indent  phism classes of finitely generated indecomposable  right  $S$-modules.
 
(7) $\Rightarrow$ (10). By Proposition \ref{equivalence} (c) and \cite[Theorem 4.1]{Behboodi3} , it is enough to show that each  of indecomposable right $S$-modules $Hom_R(U_1, Q),\cdots, Hom_R(U_n, Q)$, has a simple top.  Since  each $U_i$ has a simple socle, by Proposition \ref{equivalence} (j),  each indecomposable right $S$-module ${\rm Hom}_R(U_i,Q)$   has a simple top.  Thus,   left Morita dual ring $S=End(Q)$  of $R$  is  a  very  strongly right  K{\"o}the ring.
  
  (10) $\Rightarrow$  (7). By  \cite[Theorem 4.1]{Behboodi3}.

(7) $\Leftrightarrow$ (11).  By Proposition  \ref{equivalence} (g), every indecomposable left $R$-module has a simple   socle if and only if every indecomposable projective left $T$-module has a simple  socle, i.e.,  $T$ is left $QF$-2.

(7) $\Rightarrow$  (12). By Proposition \ref{equivalence}  (b),  $T\cong T^\prime$,  where   $T^\prime$ is the right  Auslander ring  of $S$. Thus by \cite[Corollary 2.5]{Behboodi3},  it is enough to show that   every  finitely generated projective indecomposable right 
$T^\prime$-module has a simple top. On the other hand, by   Proposition \ref{equivalence}  (a) and (g),  it is enough to show that   every indecomposable right $S$-module  $Hom(U_i,Q)$   has a simple top.  
By our hypothesis each  $U_i$  has a simple   socle and hence  $Hom(soc(U_i), Q)) \cong soc(\bigoplus_{k=1}^{m}Hom(S_{i}, Q)) = top(Hom(U_i , Q))$ is simple, and the proof is  completed, and so  $R$ is   of finite 
representation  type and the left Auslander ring  of $R$ is  right co-$QF$-2. 

(12) $\Rightarrow$  (7).  By Proposition \ref{equivalence} (d),  $T\cong T^\prime$,  where   $T^\prime$ is the right  Auslander ring  of $S$. By Proposition \ref{equivalence} (j), we have $top(Hom(U_{i} , Q)) \cong Hom_{R}(soc(U_i), Q)$ and since  each $top(Hom(U_{i} , Q))$  is  simple, we conclude that  $soc(U_{i})$  is  also  simple for each $i$. Thus 
 $R$ is of finite representation  type and  every  $($finitely generated$)$ indecomposable  left $R$-module has a simple  socle.  
       
(7) $\Leftrightarrow$ (13). It  is by  Proposition \ref{Projective cover} (b).

The final statement   {\it ``$R$ is  an Artinian  left serial  ring"}  is also from \cite[Theorem 1]{Er}.
\end{proof}

\begin{Cor}\label{S is very stongly right Kothe}
Let $R$ be a ring and $S$ be the Morita dual ring of $R$. Then the  following statements are equivanlent:  \vspace*{0.2cm} \\
\indent {{\rm (1)}} $R$  is a very strongly left K{\"o}the ring and $S$ is a  very strongly right K{\"o}the ring.\\
\indent {{\rm (2)}} Any indecomposable left $R$-module has simple socle and simple top.\\
\indent {{\rm (3)}} $R$ is Artinian serial ring.
\end{Cor}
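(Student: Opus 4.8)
Throughout, I would use that the standing hypothesis ``$S$ is the Morita dual ring of $R$'' already presupposes, in the sense of Remarks \ref{finite representation type}--\ref{left Morita}, that $R$ is of finite representation type; this makes the duality $\mathrm{Hom}_R(-,Q)\colon R\text{-Mod}\to\text{Mod-}S$ of Proposition \ref{equivalence} and all its consequences available, so finite representation type may be treated as known in each of the three conditions. My plan is to prove $(1)\Leftrightarrow(2)$ by reading the two halves of (2) directly off the two K{\"o}the hypotheses in (1), and then to prove $(2)\Leftrightarrow(3)$ from the serial conclusions of Theorem \ref{very strongly left co-Kothe} together with its top-dual in \cite{Behboodi3}.

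For $(1)\Leftrightarrow(2)$ I would invoke two ready-made characterizations. By \cite[Theorem 4.1]{Behboodi3}, $R$ is very strongly left K{\"o}the precisely when every indecomposable left $R$-module has a simple top. Next, the equivalence of conditions (7) and (10) in Theorem \ref{very strongly left co-Kothe} says that every indecomposable left $R$-module has a simple socle precisely when the Morita dual $S$ is a very strongly right K{\"o}the ring. Superposing these, (1) holds exactly when every indecomposable left $R$-module carries simultaneously a simple top and a simple socle, which is (2); no further computation is needed here, since the duality bookkeeping is already packaged inside Theorem \ref{very strongly left co-Kothe}.

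For $(2)\Rightarrow(3)$ I would split (2) into its two halves. The simple-socle half, with finite representation type, is exactly condition (7) of Theorem \ref{very strongly left co-Kothe}, whose final clause delivers that $R$ is Artinian left serial. The simple-top half makes $R$ very strongly left K{\"o}the, and the structure theorem \cite[Theorem 4.1]{Behboodi3}---the top-dual of Theorem \ref{very strongly left co-Kothe}, with the top in place of the socle and ``right serial'' in place of ``left serial''---delivers that $R$ is Artinian right serial; hence $R$ is Artinian serial. For the converse $(3)\Rightarrow(2)$ I would note that an Artinian serial ring has finite representation type and, by Nakayama's theorem on generalized uniserial rings, all of its indecomposable left modules are uniserial; a uniserial module has a unique composition series and so has both a simple top and a simple socle.

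The step I expect to be the real obstacle is $(2)\Rightarrow(3)$. Having a simple top together with a simple socle for every indecomposable does not by itself force uniseriality---a local module with simple socle can still branch below its radical---so the serial conclusion cannot be read off by an elementary lattice argument; it has to be extracted, side by side, from the deep input underlying these theorems, namely Er's description \cite{Er} of the rings all of whose modules are direct sums of extending modules. Keeping the handedness straight is the heart of the matter: the left socle condition yields \emph{left} seriality, the left top condition (routed through the duality) yields \emph{right} seriality, and only their conjunction produces the two-sided serial structure asserted in (3).
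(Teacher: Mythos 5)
Your proof is correct and follows essentially the same route as the paper, whose entire proof consists of citing \cite[Theorem 4.1]{Behboodi3}, Theorem \ref{very strongly left co-Kothe} and \cite[Theorem 55.16]{Wisbauer} --- exactly the three ingredients you assemble. Your explicit bookkeeping of the handedness (left socle condition giving left seriality, left top condition giving right seriality) and of the standing finite-representation-type hypothesis merely fills in details the paper leaves implicit.
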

\begin{proof}
It is by \cite[Theorem 4.1]{Behboodi3}, \cite[Theorem 55.16]{Wisbauer} and Theorem \ref{very strongly left co-Kothe}. 
\end{proof}
 
 An Artinian ring $R$ is said to have a {\it self}-({\it Morita}) {\it duality} if there is a Morita duality $D$ between $R$-mod, the category of finitely generated left $R$-modules,
and mod-$R$, the category of finitely generated right $R$-modules. Since we are assuming that $R$ is Artinian, Morita \cite{Morita} and Azumaya \cite{Azumaya} have shown:
$R$ has a self-duality $D$ if and only if there is an injective cogenerator $_RE$ of $R$-mod and a ring isomorphism $v: R \rightarrow End(_RE)$ 
(which induces a right $R$-structure on $E$ via $x . r = xv(r)$ for $x \in E$ and $r \in R$), such that the dualities $D$ and  $Hom_R( -, _RE_R)$ are naturally equivalent.

 A module $M$ is called {\it lifting } if every submodule $N$ of $M$ lies above a direct summand of $M$,  i.e., there exists a direct-sum decomposition $M = M_1 \bigoplus M_2$ with $M_1 \subseteq N$ and $N \bigcap M_2$ superfluous in $M_2$. Extending modules generalize (quasi-)injective, semisimple, and uniform modules,  while lifting modules extend semisimple and hollow modules. These modules
have been extensively studied in the last years (see, for instance, \cite{Clark, Dung} for a detailed account on
them). Also, a ring $R$ is called of {\it left local type}  if every finitely generated indecomposable left $R$-module is local (see \cite{Goodearl}).

By \cite[Corollary  5.13]{Kado},  every Artinian serial ring  has self-duality. By using this fact,   the  above theorem  and \cite[Proposition 55.16]{Wisbauer}, we have the following  characterization of (very) strongly co-K{\"o}the rings.
In particular, the next  theorem characterizes completely those left Artinian rings for which any  $($finitely generated$)$ indecomposable left  module   has  a simple socle and  simple top.
\begin{The}\label{Kothe}
The following statements are equivalent for any  ring $R$:\vspace*{0.2cm} \\
\indent {{\rm (1)}} $R$ is a very strongly co-K{\"o}the ring.  \\
\indent {{\rm (2)}} $R$ is a very strongly K{\"o}the ring.  \\
\indent {{\rm (3)}}  Every left and right $R$-module  is a direct sum of local modules.\\
\indent {{\rm (4)}} $R$ is an Artinian serial ring.\\
\indent {{\rm (5)}} Every left $R$-module is serial.\\
\indent {{\rm (6)}} Every right $R$-module is serial.\\
\indent {{\rm (7)}}  $R$ is of finite representation  type and has  local type.\\
\indent {{\rm (8)}}  $R$ is of finite representation  type and has  colocal type.\\
\indent {{\rm (9)}} Every left and right $R$-module is a direct sum of uniform modules.\\
\indent {{\rm (10)}} Every left and right $R$-module is a direct sum of extending modules.\\
\indent {{\rm (11)}} Every left and right $R$-module is a direct sum of lifting modules.\\
\indent {{\rm (12)}}  $R$ is  of finite representation  type  and the left  (right)  Auslander ring   of $R$  is a \indent\indent $QF$-2 ring.\\   
\indent {{\rm (13)}}  $R$ is   of finite representation   type and the left  (right) Auslander ring   of $R$  is a \indent\indent co-$QF$-2 ring.\\
\indent {{\rm (14)}}  $R$ is left Artinian  and  every  $($finitely generated$)$ indecomposable left $R$-module  is \indent\indent  uniserial.\\
\indent {{\rm (15)}} Every left and right $R$-module is a direct sum of  finitely generated   modules with   \indent\indent square-free  top.  \\
\indent {{\rm (16)}} Every left and right $R$-module is a direct sum of  (finitely generated)  indecomposable \indent  \indent modules with   simple  socle and simple top.  \\
\indent {{\rm (17)}}  $R$ is left Artinian and every  $($finitely generated$)$ indecomposable left $R$-module has \indent\indent a simple socle and  simple top.
 \end{The}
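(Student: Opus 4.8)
The plan is to prove Theorem \ref{Kothe} by establishing a cycle of implications that leverages the self-duality of Artinian serial rings together with the already-proven characterizations of Theorem \ref{very strongly left co-Kothe} and its analogue for K{\"o}the rings. The crucial structural fact is \cite[Corollary 5.13]{Kado}: every Artinian serial ring has a self-duality. This is what will let us pass freely between left and right modules, collapsing the left/right asymmetry that normally separates co-K{\"o}the from K{\"o}the conditions. The overall strategy is to show that each of the seventeen conditions is equivalent to ``$R$ is an Artinian serial ring'' (condition (4)), which I will treat as the central hub.

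First I would handle the purely left-sided (or purely right-sided) conditions by invoking Theorem \ref{very strongly left co-Kothe} directly. That theorem already gives the equivalence of ``very strongly left co-K{\"o}the,'' ``every left module is a direct sum of uniform modules,'' ``every left module is a direct sum of extending modules,'' the finite-representation-type conditions (7), (8), (11), (12), (13) on the left, and crucially its closing line that any very strongly left co-K{\"o}the ring is Artinian left serial. The right-handed mirror images follow by the same theorem applied to $R^{\mathrm{op}}$. The remaining task is to show that the left-serial and right-serial conditions coincide with genuine two-sided seriality (condition (4)), and that the ``very strongly K{\"o}the'' condition (2) and its module-theoretic reformulations (3), (15), (16), (17) also land at (4). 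For (4) $\Rightarrow$ everything, I would use that an Artinian serial ring has finite representation type (stated in the text preceding the theorem) and that its finitely generated indecomposable modules are exactly the uniserial ones, each of which has a simple socle and a simple top; direct-sum decompositions of arbitrary modules into uniserials then follow from the serial-ring theory in \cite[Section 55]{Wisbauer}, specifically \cite[Proposition 55.16]{Wisbauer}.

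The main obstacle, and the step I would spend the most care on, is proving that a one-sided condition such as ``every left $R$-module is serial'' (condition (5)) forces $R$ to be Artinian serial on both sides, i.e. (5) $\Rightarrow$ (4). The subtlety is that a ring can be left serial without being right serial, so one cannot simply quote a one-sided hypothesis and conclude two-sided seriality. Here the self-duality is decisive: once Theorem \ref{very strongly left co-Kothe} tells us that the left-sided hypothesis makes $R$ Artinian \emph{left} serial and of finite representation type, I would invoke \cite[Corollary 5.13]{Kado} to obtain a self-duality $D$, and then argue that $D$ carries the left uniserial modules to right uniserial modules, so that the right module $R_R$ (being the dual of a left serial module, or handled via the Auslander-ring symmetry of Proposition \ref{equivalence}(d)) is itself serial. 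This is where the QF-2 / co-QF-2 symmetry of the Auslander ring does real work, since the two-sided conditions (12) and (13) are visibly self-dual and bridge the gap between the left and right statements.

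Finally, I would knit in the ``square-free top'' and local-type conditions (15), (7), (8), (14) by observing that for a ring that is \emph{already} known to be of finite representation type with all indecomposables uniserial, having a simple socle and having a simple top are equivalent properties of the indecomposables, and square-free collapses to simple in the uniserial setting because a uniserial module has an essential simple socle and a local (hence simple) top. Thus conditions (14), (16), (17) reduce to the uniserial description of indecomposables, which is precisely condition (4) restated. Assembling these links — the left half and right half from Theorem \ref{very strongly left co-Kothe} applied to $R$ and $R^{\mathrm{op}}$, the two-sided bridge from self-duality and the Auslander ring, and the serial-module decomposition from \cite[Proposition 55.16]{Wisbauer} — closes the equivalence and completes the proof.
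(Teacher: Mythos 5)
Your overall architecture --- taking condition (4) as the hub, feeding the one-sided conditions through Theorem \ref{very strongly left co-Kothe} (and \cite[Theorems 4.1 and 4.7]{Behboodi3} on the K{\"o}the side), using \cite[Proposition 55.16]{Wisbauer} for the serial decompositions, and the self-duality of Artinian serial rings from \cite[Corollary 5.13]{Kado} to bridge left and right --- is exactly the paper's. But the step you yourself single out as the main obstacle, (5) $\Rightarrow$ (4), is resolved circularly. You propose to first deduce from Theorem \ref{very strongly left co-Kothe} that the left-sided hypothesis makes $R$ Artinian \emph{left} serial of finite representation type, and then invoke \cite[Corollary 5.13]{Kado} to obtain a self-duality with which to transport seriality to the right side. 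Kado's corollary, however, applies to Artinian \emph{serial} rings, i.e.\ rings already known to be left and right serial --- which is precisely what you are trying to prove. An Artinian left serial ring of finite representation type need not admit a self-duality, and the paper's Example \ref{exam} shows that one-sided versions of these conditions are genuinely asymmetric, so some honest two-sided input is required at this point. The appeal to Proposition \ref{equivalence}(d) does not repair the gap either: that statement identifies the left Auslander ring of $R$ with the right Auslander ring of the Morita \emph{dual} $S$, not of $R$ itself, so conditions (12) and (13) are not ``visibly self-dual'' in the sense you use.

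The paper sidesteps the issue by citing \cite[Proposition 55.16 (1)]{Wisbauer} directly for (4) $\Leftrightarrow$ (5) $\Leftrightarrow$ (6) $\Leftrightarrow$ (16): the classical Nakayama--Warfield theorem already asserts that if every left $R$-module is serial then $R$ is a two-sided Artinian serial ring, so the left-to-right passage is built into the cited result rather than derived from self-duality. Only after that is the self-duality of \cite[Corollary 5.13]{Kado} legitimately available, and the paper then uses it (together with Theorem \ref{very strongly left co-Kothe}) for the remaining two-sided conditions (8), (10), (13), (14), (15), plus \cite[Corollary 2]{Er}, \cite[Corollary 2.3]{Pedro} and \cite[Proposition 55.16 (2)]{Wisbauer} for the extending/lifting/local-type items. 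If you replace your self-duality argument for (5) $\Rightarrow$ (4) with that citation, the rest of your outline goes through and coincides with the paper's proof.
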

\begin{proof}

(1) $\Leftrightarrow$ (4). It  is by Theorem \ref{very strongly left co-Kothe} and \cite[Corollary  2]{Er}.

(2) $\Leftrightarrow$ (4). It  is by  \cite[Theorem 4.7]{Behboodi3}

  (8) $\Leftrightarrow$ (9). It is by \cite[Corollary 2.3]{Pedro}.
   
(2) $\Leftrightarrow$ (3). It  is   by  \cite[Theorem 4.1]{Behboodi3}.

(4) $\Leftrightarrow$ (5)  $\Leftrightarrow$ (6) $\Leftrightarrow$ (16). It  is by  \cite[Proposition 55.16 (1)]{Wisbauer}.

(3) $\Leftrightarrow$ (7) $\Leftrightarrow$ (11)  $\Leftrightarrow$  (12). It  is by  \cite[Theorem 4.1]{Behboodi3}. 

(4) $\Leftrightarrow$ (5) $\Leftrightarrow$ (8) $\Leftrightarrow$ (10)  $\Leftrightarrow$ (13) $\Leftrightarrow$ (14)  $\Leftrightarrow$ (15)  $\Leftrightarrow$ (16). It  is by Theorem \ref{very strongly left co-Kothe}, and the fact that  every Artinian serial ring  has self-duality..

(5) $\Leftrightarrow$ (17). It    is by \cite[Proposition 55.16 (2)]{Wisbauer}.
\end{proof}

Now the following corollary  is obtained immediately.
\begin{Cor}\label{Commutative co-Kothe}
The following conditions are equivalent for a  commutative ring  $R$:\vspace*{0.2cm} \\
\indent {{\rm (1)}} $R$ is a  K{\"o}the  (co-K{\"o}the) ring.  \\
\indent {{\rm (2)}} $R$ is a strongly  K{\"o}the  (co-K{\"o}the) ring. \\
\indent {{\rm (3)}} $R$ is a very strongly  K{\"o}the  (co-K{\"o}the) ring.  \\
\indent {{\rm (4)}} Every $R$-module is a direct sum of local   modules.\\
\indent {{\rm (5)}} Every  $R$-module is a direct sum of cyclic
 modules.\\
\indent {{\rm (6)}} Every $R$-module is a direct sum of uniform modules.\\
\indent {{\rm (7)}} Every $R$-module is a direct sum of uniserial  modules.\\
\indent {{\rm (8)}} Every $R$-module is a direct sum of extending modules.\\
\indent {{\rm (9)}} Every  $R$-module is a direct sum of  square-free  modules. \\
\indent {{\rm (10)}} Every  $R$-module is a direct sum of   indecomposable  modules.\\ 
\indent {{\rm (11)}} Every  $R$-module is a direct sum of  finitely generated  modules. \\
\indent {{\rm (12)}} Every  $R$-module is  isomorphic to a direct sum of ideals of $R$. \\
\indent {{\rm (13)}} $R$ is an Artinian principal ideal ring.\\
\indent {{\rm (14)}} $R$ is an Artinian serial ring.
\end{Cor}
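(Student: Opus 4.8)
The plan is to show that, for a commutative ring $R$, every one of the listed conditions is equivalent to a single ``hub'': $R$ is an Artinian principal ideal ring, equivalently an Artinian serial ring. Two structural simplifications drive everything. First, since $R$ is commutative the left/right distinction disappears, so the parenthetical ``(co-)'' variants and the left/right versions of K{\"o}the, strongly K{\"o}the and very strongly K{\"o}the collapse pairwise; moreover $R$ is quasi-duo, which lets me invoke Theorem \ref{left quasi duo left co-kothe}. Second, a commutative Artinian ring decomposes as a finite product $R\cong R_1\times\cdots\times R_k$ of local Artinian rings, and any indecomposable module is a module over a single factor $R_i$; hence its socle is a direct sum of copies of the unique simple $R_i$-module $R_i/\mathfrak{m}_i$, so ``square-free socle'' and ``simple socle'' mean exactly the same thing for indecomposables (and dually for tops). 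The equivalence (13)$\Leftrightarrow$(14) is then elementary: a commutative Artinian principal ideal ring is a product of chain rings, hence serial, and conversely each uniserial factor has all ideals principal.

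Next I assemble the conditions around the hub using the machinery already in place. Theorem \ref{Kothe}, read with left${}={}$right, immediately identifies the cluster (3), (4), (6), (7), (8), (14) --- very strongly (co-)K{\"o}the, direct sums of local / uniform / uniserial / extending modules, and Artinian serial --- as mutually equivalent. The K{\"o}the--Cohen--Kaplansky theorem gives (5)$\Leftrightarrow$(13) (condition (5) is the very definition of a K{\"o}the ring, i.e.\ condition (1) in its K{\"o}the reading), and condition (9) is handled by Theorem \ref{strongly left co-Kothe}, which equates ``direct sum of square-free modules'' with ``strongly (co-)K{\"o}the''. To collapse the three strength levels I use a squeeze: on the K{\"o}the side, (5)$\Leftrightarrow$(13)$\Leftrightarrow$(14)$\Leftrightarrow$ very strongly K{\"o}the (the last lying in the cluster via Theorem \ref{Kothe}), while very strongly K{\"o}the $\Rightarrow$ strongly K{\"o}the $\Rightarrow$ K{\"o}the by the nesting of the classes, so all three coincide; on the co-K{\"o}the side, quasi-duoness and Theorem \ref{left quasi duo left co-kothe} give ``co-K{\"o}the'' $\Leftrightarrow$ ``strongly co-K{\"o}the'', and the local-factor remark above upgrades ``square-free socle'' to ``simple socle'', i.e.\ strongly co-K{\"o}the $\Leftrightarrow$ very strongly co-K{\"o}the.

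The three ``thin'' conditions (10), (11), (12) I route through pure semisimplicity. Conditions (10) and (11) are equivalent to $R$ being pure semisimple by Lemma \ref{Zimmermann}; and since every ideal of $R$ has at most $\mathrm{card}(R)$ generators, condition (12) also produces a cardinal bound on the summands and hence forces pure semisimplicity by the same lemma. For the reverse inclusion giving (12) I note that over the hub $R$ is a product of chain rings, in which each cyclic module $R_i/\mathfrak{m}_i^{\,j}$ is isomorphic to the annihilator ideal $\mathfrak{m}_i^{\,\ell-j}$, so condition (5) refines to (12). What remains is the one genuinely nontrivial implication: that $R$ pure semisimple (equivalently, of finite representation type) forces $R$ to be Artinian serial.

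This last step is the main obstacle, because finite representation type alone says nothing about socles, whereas the weak conditions (10)--(12) deliver only it. I would prove the needed fact --- \emph{a commutative ring of finite representation type is an Artinian principal ideal ring} --- by reducing to a local factor $(R_i,\mathfrak{m}_i)$ with residue field $k$ and showing $\dim_k \mathfrak{m}_i/\mathfrak{m}_i^2\le 1$. If instead $\dim_k\mathfrak{m}_i/\mathfrak{m}_i^2\ge 2$, then the quotient $R_i/\mathfrak{m}_i^2$ has radical square zero with radical of $k$-dimension at least $2$; such a radical-square-zero ring is of infinite representation type (the Kronecker phenomenon), while finite representation type is inherited by quotient rings, since an indecomposable $R_i/\mathfrak{m}_i^2$-module is in particular an indecomposable $R_i$-module. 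This contradiction forces $\mathfrak{m}_i$ to be principal, so each $R_i$ is a chain ring and $R$ is serial, landing in the hub. Once this is established every condition is equivalent to the hub and the corollary follows; as a side remark, for the co-K{\"o}the directions one could instead avoid the Kronecker argument via the self-duality of commutative Artinian rings together with the Morita-dual criterion of Theorem \ref{strongly left co-Kothe}, but conditions (10)--(12) seem to require the representation-type argument regardless.
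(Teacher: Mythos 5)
Your proof is correct, and it lands on the same hub the paper uses: everything is funneled through Theorem \ref{Kothe} (with left $=$ right for commutative rings) together with the K{\"o}the--Cohen--Kaplansky theorem and the identification of commutative Artinian serial rings with Artinian principal ideal rings. The difference is one of completeness rather than of strategy. The paper offers no written proof --- the corollary is declared to be ``obtained immediately'' from Theorem \ref{Kothe} --- and that theorem genuinely does cover the cluster (1)--(8), (13), (14) once one adds the quasi-duo collapse of Theorem \ref{left quasi duo left co-kothe} and your observation that over a product of local Artinian rings ``square-free socle'' and ``simple socle'' coincide for indecomposables. But conditions (10), (11) and (12) do not appear in Theorem \ref{Kothe} in any form, and here you supply an argument the paper leaves entirely implicit: Lemma \ref{Zimmermann} reduces each of them to pure semisimplicity, the chain-ring structure gives the converse for (12) via $R_i/\mathfrak{m}_i^{\,j}\cong \mathfrak{m}_i^{\,\ell_i-j}$, and the radical-square-zero (Kronecker) argument shows that a commutative ring of finite representation type must have principal maximal ideals. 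That last step is the real content your write-up adds; it is classical and correct, though you should note that passing from ``pure semisimple'' to ``finite representation type'' for a commutative ring invokes the two-sided pure-semisimplicity theorem (or, alternatively, one can argue directly that the unbounded family of indecomposables over $R_i/\mathfrak{m}_i^2$ violates pure semisimplicity via \cite[Proposition 54.3]{Wisbauer}-type bounds, as the paper does elsewhere). In short: same skeleton as the paper, but your version actually proves the equivalences (9)--(12) that the paper's ``immediately'' quietly skips.
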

   
By  \cite[Proposition 27.14]{Anderson-Fuller} for any  semi-perfect ring $R$, there exists (uniquely up to isomorphism) a basic ring $A_0$ which is Morita equivalent to $R$. 
In fact, a ring $A_0$ is a {\it basic ring } for $R$ in case $A_0$ is isomorphic to $eRe$ for some basic idempotent $e \in R$.  One can easily see that  any finite product of local rings  is a   semi-perfect basic ring. 
 
We recall that during the years 1962 to 1965,  Kawada \cite{Kawada1,Kawada2,Kawada3} solved  K{\"o}the problem for basic finite-dimensional algebras. 
Kawada characterized completely those finite-dimensional algebras $A$ for which  $soc(K)$ and $top(K)$ are simple for each  finitely generated indecomposable $A$-module $K$,  and describes the possible 
indecomposable modules.  A ring $R$ is called {\it left}  (resp., {\it right})  {\it Kawada}  if any ring Morita equivalent to $R$ is a left  (resp., right) K{\"o}the ring.  By a theorem of Ringel \cite{Ringel} a finite dimensional $K$-algebra
$A$ is a Kawada algebra if and only if the basic ring of $A$ is a K{\"o}the algebra, if and only if,
for each finitely generated indecomposable $A$-module $K$, $soc(K)$ and $top(K)$ are simple.
We recall that the left K{\"o}the property is not a Morita invariant property, and following Ringel \cite{Ringel} 
we say that a ring $R$ is left (resp., right) Kawada if any ring Morita equivalent to $R$ is a left  (resp., right) K{\"o}the ring.
 
Next,   we give the   following characterizations of  rings over which every  left module is a direct sum of local modules and the product of any two maximal  ideals commutes.
\begin{Lem}\label{prime ideals commutes}
The following statements are equivalent for any  ring $R$:\vspace*{0.2cm} \\
\indent {{\rm (1)}}  $R$ is a very strongly left K{\"o}the ring in which the product of any two maximal ideals \indent\indent commutes.\\   
\indent {{\rm (2)}} $R\cong Mat_{n_1}(R_1)\times\cdots\times  Mat_{n_k}(R_k)$, where $k,~n_1,\cdots,n_k\in\Bbb{N}$,   each  $R_i$ is a   local  \indent\indent  Artinian    ring and  each $Mat_{n_i}(R_i)$  is a direct sum of local  left modules for each  \indent\indent  $1\leq i\leq k$. 
\end{Lem}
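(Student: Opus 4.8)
The plan is to prove the two implications separately, with $(2)\Rightarrow(1)$ routine and $(1)\Rightarrow(2)$ carrying the real content. Throughout I will use the elementary observation that a module has simple top if and only if it is local (its radical is then its unique maximal submodule), so that ``very strongly left K\"othe'' is literally the assertion that every nonzero left module is a direct sum of local modules. In particular this property passes to every ring direct factor and is preserved under finite direct products of rings, which I will invoke freely.

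For $(2)\Rightarrow(1)$: writing $R\cong\prod_{i=1}^k Mat_{n_i}(R_i)$ with each $R_i$ local Artinian, each factor $Mat_{n_i}(R_i)$ has the \emph{unique} maximal ideal $Mat_{n_i}(J(R_i))$, because $Mat_{n_i}(R_i)/Mat_{n_i}(J(R_i))\cong Mat_{n_i}(R_i/J(R_i))$ is simple Artinian. Hence $R$ has exactly $k$ maximal ideals $\mathcal M_1,\dots,\mathcal M_k$, one supported on each factor, and a componentwise computation shows $\mathcal M_i\mathcal M_j=\mathcal M_j\mathcal M_i$ for all $i,j$, so products of maximal ideals commute. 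Since each factor is very strongly left K\"othe by hypothesis and this property is closed under finite products, $R$ is very strongly left K\"othe.

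For $(1)\Rightarrow(2)$: first, being very strongly left K\"othe forces $R$ to be left Artinian of finite representation type by \cite[Theorem 4.1]{Behboodi3} (alternatively, every left module is a direct sum of local modules, whence $R$ is left pure semisimple by Lemma \ref{Zimmermann}). Thus $R/J$ is semisimple Artinian with finitely many maximal two-sided ideals $P_1,\dots,P_k$, and the block decomposition of $R$ into indecomposable ring factors corresponds to the connected components of its quiver. The heart of the argument is to show that the commutativity hypothesis forces the quiver to have \emph{no arrows between distinct vertices}; granting this, each connected component is a single vertex, each block $B$ has a unique simple module, i.e. $B/J(B)\cong Mat_n(D)$ is simple Artinian, and the standard structure theorem for a semiperfect ring with one isoclass of simple module (cf. \cite{Anderson-Fuller}) gives $B\cong Mat_n(R_0)$ with $R_0=eBe$ local Artinian for a primitive idempotent $e$. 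Each such factor inherits ``very strongly left K\"othe'' as a ring direct factor of $R$, giving precisely the decomposition in $(2)$, and the number of factors equals the number of maximal ideals.

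The main obstacle is the ``no off-diagonal arrows'' step, which I would settle by a direct action computation. Let $P\neq Q$ be maximal ideals with associated (non-isomorphic) simple modules $S,S'$; then $PS=0$, $QS=S$, $PS'=S'$ and $QS'=0$. Suppose there were an arrow between the two vertices, i.e. a non-split length-two extension $E$ with $\mathrm{top}(E)\cong S$ and $\mathrm{soc}(E)\cong S'$ (the reversed case being symmetric). Filtering $E$ by its socle and using the displayed actions on $S$ and $S'$, one computes $PE=S'$ and $QE=E$, whence
$$QPE=Q(PE)=QS'=0,\qquad PQE=P(QE)=PE=S'\neq 0 .$$
Thus $PQ$ and $QP$ act differently on $E$, so $PQ\neq QP$, contradicting the hypothesis; hence no arrows join distinct vertices. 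I expect the only remaining technical points to be the bookkeeping for blocks of simple multiplicity $m>1$ (so that $Mat_m(R_0)$, rather than $R_0$ itself, appears) and checking that the Morita structure theorem applies verbatim there.
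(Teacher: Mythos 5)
Your argument is correct, but for the substantive implication it takes a genuinely different route from the paper. The paper's proof of $(1)\Rightarrow(2)$ consists of two citations: \cite[Theorem 4.1]{Behboodi3} to get finite representation type (hence that $R$ is Artinian), and then \cite[Theorem 3.6]{Tolooei}, which is invoked as a black box delivering exactly the decomposition $R\cong Mat_{n_1}(R_1)\times\cdots\times Mat_{n_k}(R_k)$ with each $R_i$ local Artinian from the hypothesis that products of maximal ideals commute; the converse is dismissed as ``clear''. You instead re-prove the needed special case of that external result: your computation $QPE=QS'=0$ versus $PQE=PE=S'\neq 0$ on a non-split length-two extension $E$ correctly shows that commutativity of products of maximal ideals forces $\mathrm{Ext}^1(S,S')=0$ for non-isomorphic simples $S,S'$, and the steps you defer as bookkeeping are indeed standard: vanishing of the off-diagonal $\mathrm{Ext}^1$ gives $e_jJe_i\subseteq J^2$ for non-isomorphic primitive idempotents $e_i,e_j$, hence $e_jRe_i=e_jJe_i\subseteq J^m$ for all $m$ by an easy induction and so $e_jRe_i=0$ by nilpotence of $J$; the blocks then each carry a single simple and are full matrix rings over local rings by the usual semiperfect structure theory. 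Your version buys self-containedness and makes visible the mechanism by which the commutativity hypothesis acts (it kills all arrows between distinct vertices of the quiver), at the cost of length; the paper buys brevity at the cost of opacity. Your treatment of $(2)\Rightarrow(1)$ is also more explicit than the paper's ``It is clear'': you identify the $k$ maximal ideals, check componentwise commutation, and note that the very strongly left K{\"o}the property passes to and from finite ring direct products, which is exactly what the clause ``each $Mat_{n_i}(R_i)$ is a direct sum of local left modules'' in $(2)$ requires.
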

\begin{proof} (1) $\Rightarrow$ (2). By  \cite[Theorem 4.1]{Behboodi3}, $R$ is of finite representation type.  Thus,  $R$ is an Artinian ring.   Now by \cite[Theorem 3.6] {Tolooei}, 
$R\cong Mat_{n_1}(R_1)\times\cdots\times  Mat_{n_k}(R_k)$, where $k,~n_1,\cdots,n_k\in\Bbb{N}$,  and  each  $R_i$ is a   local   Artinian    ring. Clearly,  each $Mat_{n_i}(R_i)$  is also a  left  K{\"o}the  ring  for each  $1\leq i\leq k$.

(2) $\Rightarrow$ (1).  It is clear.
\end{proof}

Next,   we give the   following characterizations of  Kawada rings  for which the product of any two maximal  ideals commutes.
\begin{The}\label{commutes-Kawada}
The following statements are equivalent for any  ring $R$: \vspace*{0.2cm} \\
\indent {{\rm (1)}}  $R$ is a left Kawada ring in which the product of any two maximal ideals commutes.\\
\indent {{\rm (2)}}  $R$ is  of finite representation   type  and  the  Auslander ring $T$ of $A_0$ is a  right $QF$-2  \indent\indent  ring.\\
\indent {{\rm (3)}} The basic ring $A_0$ of  $R$ is a finite direct product of  local (very strongly) left  K{\"o}the \indent\indent rings.\\
\indent {{\rm (4)}} $ R \cong Mat_{n_1}(R_1)\times\cdots\times  Mat_{n_k}(R_k)$, where $k,~n_1,\cdots,n_k\in\Bbb{N}$  and  each $R_i$ is a  \indent\indent  local  (very strongly) left K{\"o}the  ring  for  $1\leq i\leq k$. 

Consequently, a Kawada ring is an  Artinian principal  ideal ring 
if and only if its
maximal ideals commute.
\end{The}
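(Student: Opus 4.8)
The plan is to route the whole statement through the structure theorem \cite[Theorem 3.6]{Tolooei} and the local-block analysis already packaged in Lemma \ref{prime ideals commutes}, using throughout that a left Kawada ring is left K\"othe (hence of finite representation type, hence left Artinian) and that ``Kawada'' is a Morita-invariant property. The latter lets me move freely between $R$, its basic ring $A_0\cong eRe$ (\cite[Proposition 27.14]{Anderson-Fuller}), any block $Mat_{n_i}(R_i)$, and its coefficient ring $R_i$; in particular, since a finite product of rings is Kawada exactly when each factor is, and since replacing one factor by a Morita-equivalent ring keeps the product Morita-equivalent to $R$, each $R_i$ arising below inherits the Kawada property.

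First I would close the cycle (1)$\Leftrightarrow$(4). Assuming (1), finite representation type together with commuting maximal ideals gives, by \cite[Theorem 3.6]{Tolooei}, a decomposition $R\cong Mat_{n_1}(R_1)\times\cdots\times Mat_{n_k}(R_k)$ with each $R_i$ local Artinian; by the Morita remarks each $R_i$ is Kawada, and being local its unique simple module has one-dimensional top, so ``cyclic top'' and ``simple top'' coincide and $R_i$ is (very strongly) left K\"othe, which is (4). For (4)$\Rightarrow$(1) the displayed ring is Morita-equivalent to $R_1\times\cdots\times R_k$, hence Kawada, while the maximal two-sided ideals $\mathfrak m_i$ --- each equal to the full ring in every slot but the $i$-th, where it is $Mat_{n_i}(J(R_i))$ --- satisfy $\mathfrak m_i\mathfrak m_j=\mathfrak m_j\mathfrak m_i$ by a one-line slotwise computation. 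I would splice (2) and (3) into this cycle by feeding the local blocks into the Auslander-ring / $QF$-2 dictionary of Theorem \ref{very strongly left co-Kothe} together with Lemma \ref{prime ideals commutes}.

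Next I would deduce the consequence from this product description. For the forward direction, commuting maximal ideals give, via (1)$\Rightarrow$(4), the decomposition above with each local $R_i$ Kawada; since being Kawada is characterised symmetrically by $soc(K)$ and $top(K)$ being simple for every finitely generated indecomposable $K$ (Ringel, \cite[Theorem 1.6]{Ringel}), each $R_i$ is very strongly K\"othe on both sides, hence Artinian serial by Theorem \ref{Kothe}, hence a local Artinian serial ring, i.e.\ a two-sided chain ring. In a chain ring every one-sided ideal is a power of $J(R_i)=R_ia_i=a_iR_i$ and $J(R_i)^t=R_ia_i^t=a_i^tR_i$ is principal, so $R_i$ is an Artinian principal ideal ring; then $Mat_{n_i}(R_i)$ is one as well, since every one-sided ideal corresponds under Morita equivalence to a submodule of the free module $R_i^{\,n_i}$ (and symmetrically on the right), which over a chain ring is a direct sum of at most $n_i$ cyclics and hence principal over $Mat_{n_i}(R_i)$; and a finite product of Artinian principal ideal rings is again one. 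Conversely, an Artinian principal ideal ring is, by the classical structure theory, a finite product of full matrix rings over Artinian chain rings, and in such a product the maximal ideals commute by the same slotwise computation; this yields the equivalence.

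The hard part will be the left--right bridge. The decomposition theorem and the $QF$-2 machinery only hand me one-sided (left) data about the local blocks, whereas ``principal ideal ring'' is intrinsically two-sided; the delicate point is to upgrade ``local very strongly left K\"othe'' to ``two-sided chain ring''. I would secure this precisely through the symmetric Ringel characterisation of Kawada rings and the self-duality of Artinian serial rings (\cite[Corollary 5.13]{Kado}), which force each block to be genuinely two-sided uniserial. The remaining verification --- that matrix rings over chain rings stay principal ideal rings --- is routine once the submodules of $R_i^{\,n_i}$ are identified as direct sums of at most $n_i$ cyclics.
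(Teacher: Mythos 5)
Your overall architecture is the one the paper intends: the paper's own proof is only the citation ``by Lemma~\ref{prime ideals commutes}, \cite[Proposition 2.7]{Behboodi3} and \cite[Theorem 3.5]{Behboodi3}'', and your route through \cite[Theorem 3.6]{Tolooei} (packaged in Lemma~\ref{prime ideals commutes}), Morita invariance of the Kawada property, and the reduction to local blocks is exactly what those citations are standing in for. You also do more than the paper by actually proving the ``Consequently'' clause (local Kawada $\Rightarrow$ two-sided chain ring $\Rightarrow$ matrix rings and products stay principal ideal rings, and the converse via the classical decomposition of Artinian principal ideal rings); that part is sound.

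There are, however, two points you assert rather than prove, and the first is the crux of an implication. In (4)$\Rightarrow$(1) you write that $R$ is ``Morita-equivalent to $R_1\times\cdots\times R_k$, hence Kawada'': being Morita equivalent to that product gives nothing unless you first show the product itself is left Kawada, i.e.\ that \emph{every} ring Morita equivalent to it --- which, since each $R_i$ is local and finitely generated projectives over a local ring are free, means every ring of the form $\prod_i Mat_{m_i}(R_i)$ --- is left K\"othe. This is precisely the content of \cite[Proposition 2.7]{Behboodi3} that the paper leans on, and it does follow from your hypotheses (every indecomposable left $R_i$-module is local, hence cyclic, and cyclic modules transfer to cyclic modules under the equivalence $M\mapsto M^{m_i}$ to $Mat_{m_i}(R_i)$-Mod, while finite representation type is Morita invariant), but as written the step is missing. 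Second, in the ``Consequently'' direction you invoke \cite[Theorem 1.6]{Ringel} for an arbitrary Artinian ring, whereas Ringel's theorem is stated for finite-dimensional algebras; you do not need it. Since each $R_i$ is local, it is quasi-duo, so two-sided Kawada gives that $R_i$ is left and right K\"othe, hence (very) strongly K\"othe on both sides by Theorem~\ref{left quasi duo left co-kothe} and \cite[Theorem 3.5]{Behboodi3}, hence Artinian serial by Theorem~\ref{Kothe}; this reaches ``two-sided chain ring'' without any appeal to Ringel. Finally, your treatment of conditions (2) and (3) is only a promissory note (``splice \dots into this cycle''); to count as a proof you would need to run the $QF$-2 dictionary of Theorem~\ref{very strongly left co-Kothe} and \cite[Theorem 4.1]{Behboodi3} on each local block explicitly.
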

\begin{proof} The proof is by Lemma \ref{prime ideals commutes},  \cite[Proposition 2.7]{Behboodi3}  and  by  \cite[Theorem 3.5]{Behboodi3}.
\end{proof}

\begin{Cor} {\rm(See also \cite[Proposition 4.6]{Behboodi3}})\label{Cor prime ideals commutes-Abelian}
\it{Let $R$ be  a  ring  in which all the idempotent  elements of $R$ are central.  Then the following statements are equivalent:  \vspace*{0.2cm} \\
\indent {{\rm (1)}} $R$ is a K{\"o}the  (co-K{\"o}the)  ring.\\
\indent {{\rm (2)}} $R$ is a strongly K{\"o}the  (co-K{\"o}the) ring.\\
\indent {{\rm (3)}} $R$ is a very strongly K{\"o}the  (co-K{\"o}the)ring.\\
\indent {{\rm (4)}} $R$ is isomorphic to a finite product of Artinian uniserial rings.}
 \end{Cor}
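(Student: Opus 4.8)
The plan is to establish the cycle $(4)\Rightarrow(3)\Rightarrow(2)\Rightarrow(1)\Rightarrow(4)$, reading the statement consistently either in its ``K{\"o}the'' version or in its ``co-K{\"o}the'' version throughout; the two readings proceed in parallel and differ only by interchanging the roles of top and socle. The implications $(3)\Rightarrow(2)\Rightarrow(1)$ do not use the hypothesis on idempotents and are essentially formal: a module with simple socle (resp. top) has square-free socle (resp. top), and by Proposition \ref{Projective cover}(c) a finitely generated module with square-free socle has cyclic socle, so the three classes are nested as very strongly $\subseteq$ strongly $\subseteq$ ordinary (co-)K{\"o}the. For $(4)\Rightarrow(3)$ I would first observe that a finite product of Artinian uniserial rings is an Artinian serial ring, and then invoke Theorem \ref{Kothe}, which identifies Artinian serial rings with very strongly (co-)K{\"o}the rings.

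The substance lies in $(1)\Rightarrow(4)$. I would begin by noting that a (co-)K{\"o}the ring is of finite representation type, hence left and right Artinian, in particular semi-perfect. The assumption that every idempotent is central then forces the global structure: since $R$ is semi-perfect, $R/J$ is semisimple Artinian, and centrality of idempotents excludes matrix blocks of size $\geq 2$ (which carry non-central idempotents), so $R/J$ is a finite direct product of division rings and $R$ is basic. Lifting the central primitive idempotents of $R/J$ and using that all idempotents are central, I obtain a ring decomposition $R\cong R_1\times\cdots\times R_k$ in which each factor has no nontrivial idempotents; being semi-perfect and connected, each $R_i$ is therefore a local Artinian ring.

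Next I would transfer the (co-)K{\"o}the hypothesis to each factor $R_i$ (the property is inherited by ring direct factors, since the $R_i$-modules are precisely the $R$-modules annihilated by the complementary factors) and exploit that over a local ring there is a unique simple module $S_i=R_i/J_i$. Consequently any cyclic semisimple $R_i$-module, and likewise any square-free one, is a vector space of dimension at most one over the division ring $R_i/J_i$ and hence is simple; this shows that over the local ring $R_i$ the three notions collapse, so $R_i$ is in fact a very strongly (co-)K{\"o}the ring. Applying Theorem \ref{Kothe} (equivalently, the final assertion of Theorem \ref{very strongly left co-Kothe} together with \cite[Theorem 4.7]{Behboodi3}) to $R_i$ yields that $R_i$ is Artinian serial; and a local Artinian serial ring is uniserial, since $_{R_i}R_i$ has local endomorphism ring and is therefore an indecomposable direct summand of a serial module, forcing it (and symmetrically $(R_i)_{R_i}$) to be uniserial. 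Thus each $R_i$ is Artinian uniserial and $R\cong R_1\times\cdots\times R_k$ is the required product, completing $(1)\Rightarrow(4)$.

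The step I expect to be the main obstacle is precisely this collapse argument: one must justify with care that centrality of all idempotents reduces $R$ to a product of local rings, and that ``cyclic socle/top'' genuinely upgrades to ``simple socle/top'' on each local factor. The most delicate point is the two-sidedness hidden in the words ``K{\"o}the'' and ``co-K{\"o}the'' (each encodes both a left and a right condition); I would address this by verifying that both the product decomposition and the local collapse are left--right symmetric, so that the argument applies verbatim to each reading, and then appeal to \cite{Behboodi1} and \cite[Theorem 4.7]{Behboodi3} (respectively the co-K{\"o}the analogues supplied by Theorem \ref{very strongly left co-Kothe}) to close the loop.
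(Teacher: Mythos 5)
Your proposal is correct and follows essentially the same route as the paper's: use the centrality of idempotents to split $R$ into a finite product of local Artinian factors, note that over a local factor cyclic (or square-free) semisimple modules are simple so the three (co-)K{\"o}the notions collapse, and then invoke Theorem \ref{Kothe} to identify the result with Artinian serial, hence locally uniserial, rings. The paper's own proof is just a list of citations (the decomposition for rings with central idempotents, Theorem \ref{left quasi duo left co-kothe}, Theorem \ref{Kothe}, and the local collapse); your write-up is the same skeleton with the details supplied.
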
  
\begin{proof}
It is clear that for any local ring, strongly (co-)K{\"o}the concept is equivalent to very strongly (co-)K{\"o}the . Then proof is by \cite[Proposition 3]{Jebrel},   Theorem \ref{prime ideals commutes}, Theorem \ref{Kothe}, Theorem \ref{left quasi duo left co-kothe} and the fact that in any Artinian local ring the Jacobson radical is the unique maximal (prime)  ideal.
\end{proof}

 \begin{Pro}\label{L33}
 The following statements are equivalent for any  local ring $R$: \vspace*{0.2cm} \\
 \indent {{\rm (1)}} Every cyclic left $R$-module is a direct sum of square-free modules.\\
 \indent {{\rm (2)}} Every cyclic left $R$-module is a direct sum of uniform modules.\\
 \indent {{\rm (3)}} $R$ is a left uniserial ring.
\end{Pro}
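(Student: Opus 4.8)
The plan is to reduce conditions (1) and (2) to statements about a \emph{single} cyclic module and then run the cycle (3) $\Rightarrow$ (2) $\Rightarrow$ (1) $\Rightarrow$ (3). The crucial preliminary observation is that \emph{every nonzero cyclic left module over a local ring is indecomposable}. Indeed, if $M=R/I$ with $I\neq R$, then $I\subseteq J$ (the unique maximal left ideal), so $top(M)=M/JM\cong R/J$ is simple; hence $M$ is a local module. Any decomposition $M=A\oplus B$ would split this simple top, forcing $JB=B$ for one summand $B$; but $B$ is cyclic (an image of $M$), so $B=0$ by Nakayama. Consequently a cyclic module is a direct sum of square-free (resp.\ uniform) modules if and only if it is \emph{itself} square-free (resp.\ uniform). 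Thus (1) becomes ``every cyclic left $R$-module is square-free'' and (2) becomes ``every cyclic left $R$-module is uniform''.

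With this reduction the easy implications are immediate. For (3) $\Rightarrow$ (2): if $R$ is left uniserial, then for each $I$ the left ideals containing $I$ form a chain, so every $R/I$ is uniserial, hence uniform. For (2) $\Rightarrow$ (1): uniform modules are square-free, as already recorded in Section~2. It remains to prove (1) $\Rightarrow$ (3), which is the substance of the proposition.

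I would prove (1) $\Rightarrow$ (3) by contraposition. Since $R$ is local, it is left uniserial precisely when its principal left ideals are totally ordered, so I assume $Ra\nsubseteq Rb$ and $Rb\nsubseteq Ra$ for some $a,b$ (necessarily nonzero non-units), and I aim to build a cyclic module whose socle contains $S\oplus S$, where $S=R/J$ is the unique simple module. The candidate is $M=R/I$ with $I:=Ja+Jb+(Ra\cap Rb)$. Here $Ja,Jb\subseteq I$, so $\bar a,\bar b\in soc(M)$. Moreover $a,b\notin I$: writing, say, $a=j_1a+j_2b+w$ with $j_i\in J$ and $w\in Ra\cap Rb$ gives $(1-j_1)a=j_2b+w\in Rb$, whence $a\in Rb$, contradicting incomparability. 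Thus $R\bar a\cong R\bar b\cong S$ are nonzero. Finally they are independent: if $R\bar a=R\bar b$ then $b\in Ra+I=Ra+Jb$ (using $Ja\subseteq Ra$ and $Ra\cap Rb\subseteq Ra$), so $(1-j)b\in Ra$ and $b\in Ra$, again a contradiction. Hence $soc(M)\supseteq R\bar a\oplus R\bar b\cong S\oplus S$, so $M$ is not square-free, contradicting (1).

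All the verifications lean on the local-ring facts that $1-j$ is a unit for $j\in J$ and that a one-sided unit is a unit; these make the nonvanishing and independence checks routine. The main obstacle is exactly the choice of $I$: it must be large enough to push $Ja$ and $Jb$ into the socle, yet small enough to keep $\bar a,\bar b$ nonzero and independent, and the combination $Ja+Jb+(Ra\cap Rb)$ is what simultaneously meets both constraints. Everything else is bookkeeping with Nakayama's lemma, and I would note that the argument needs no Artinian hypothesis, so it applies to local rings such as valuation domains as well.
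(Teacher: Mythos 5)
Your proof is correct, and its outer structure coincides with the paper's: both start from the observation that every nonzero cyclic module over a local ring is local, hence indecomposable, so conditions (1) and (2) reduce to ``every cyclic left module is itself square-free (resp.\ uniform)'', and both get (3) $\Rightarrow$ (2) $\Rightarrow$ (1) from ``uniserial $\Rightarrow$ uniform $\Rightarrow$ square-free''. Where you genuinely diverge is the substantive implication (1) $\Rightarrow$ (3). The paper notes that, since $R/J$ is the unique simple module, square-freeness of each indecomposable module $R/I$ forces $soc(R/I)$ to be simple or zero, and then concludes that ${}_RR$ is uniserial by citing the characterization of uniserial modules in \cite[Proposition 55.1]{Wisbauer}. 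You instead argue by contraposition with an explicit witness: from two incomparable principal left ideals $Ra$ and $Rb$ you form $M=R/\bigl(Ja+Jb+(Ra\cap Rb)\bigr)$ and check directly, using Nakayama and the fact that $1-j$ is a unit for $j\in J$, that $\bar a$ and $\bar b$ generate independent copies of $S=R/J$ inside $soc(M)$, so $M$ is not square-free; your verifications of $a,b\notin I$ and of independence are sound. In effect you re-prove the relevant direction of Wisbauer's characterization in the cyclic-over-local case. Your route buys self-containedness and an explicit non-square-free cyclic module at the cost of some bookkeeping; the paper's route buys brevity by citation. Both arguments are correct, and, as you note, neither requires any Artinian hypothesis.
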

\begin{proof}
If $R$ is a left uniserial ring, then every cyclic left $R$-module is uniserial, and hence uniform. Now assume that every cyclic left $R$-module is a direct sum of square-free modules. Since $R$ is a local ring, $R/I$ is indecomposable for every left ideal $I$ of $R$. Thus,  by hypothesis, $R/I$ and hence $J/I$ is square-free, for $J \leq R$ and so ${\rm Soc}(J/I)$ is simple or zero. Therefore, by \cite[Proposition 55.1]{Wisbauer}, $R$ is a left uniserial ring.
\end{proof}

\section{\bf Examples}
In   this section, some relevant examples and counterexamples are included to illustrate
our results. First, we give an example of a ring $R$, which is both a right K{\"o}the ring and a  right co-K{\"o}the ring, but it is not a very stongly right co-K{\"o}the ring. 
\begin{Examp}\label{Asgar}{\rm (See also \cite[Example 1.22]{tuganbaev} and Nakayama \cite{Nakayama1, Nakayama2}).}
{\rm Let $R$ be the 5-dimensional algebra over the field $\mathbb{Z}_2$ generated by all $3 \times 3$-matrices of the form

$$\begin{bmatrix}
f_{11} & f_{12} & f_{13} \\ 0 & f_{22} & 0 \\ 0 & 0 & f_{33}
\end{bmatrix},$$
\noindent where $f_{ij} \in \mathbb{Z}_2$. Let $e_{ij}$ be the matrix whose $ij$th entry is equal to $1$ and all other entries are equal to $0$. Then $\{ e_{11}, e_{12}, e_{13}, e_{22}, e_{33} \}$ is a $\mathbb{Z}_2$-basis of the $\mathbb{Z}_2$-algebra $R$. Moreover,

\noindent  {{\rm (1)}} $1 =e_{11} + e_{22} + e_{33}$,  where $e_{11}, e_{22},e_{33}$ are local orthogonal idempotents,
$e_{12}\mathbb{Z}_2 =e_{12}R,$  $e_{13}\mathbb{Z}_2 = e_{13}R$, $J = e_{12}\mathbb{Z}_2 + e_{13}\mathbb{Z}_2$, $(J)^{2} = 0$, and the ring $R/J$ is isomorphic to a direct product of three copies of the field $\mathbb{Z}_2$.

\noindent {{\rm (2)}} $R_{R} = e_{11}R \oplus e_{22}R \oplus e_{33}R$, where $e_{22}R = e_{22}\mathbb{Z}_2$ and $e_{33}R = e_{33}\mathbb{Z}_2$ are simple projective right $R$-modules which are isomorphic to the modules $e_{12}R$ and $e_{13}R$, respectively.

\noindent {{\rm (3)}} $e_{11}R = e_{11}\mathbb{Z}_2+ e_{12}\mathbb{Z}_2+ e_{13}\mathbb{Z}_2$ is an indecomposable distributive (and hence square-free) Noetherian Artinian completely cyclic $R$-module,  but it is not uniform and every proper nonzero submodule of $e_{11}R$ coincides either with the projective module $e_{12}R \oplus e_{13}R$ or with one of the simple projective non-isomorphic modules $e_{12}R$ and $e_{13}R$.

 It can be checked that $R$ is a hereditary, right and left Artinian basic ring. So by \cite[Theorem 5.7]{Fazelpour2}, $R$ is a right K{\"o}the ring. It can be shown that $R$ has $32$ elements that $\vert U(R)\vert = 4$ and from $28$ nonunit elements of $R$ we have $3$ isomorphism classes of cyclic indecomposable modules, which are as follows:

$$Q_1 = e_{11}R, ~~~ Q_2 = \dfrac{e_{11}R}{e_{13}R}, ~~~ Q_3 = \dfrac{e_{11}R}{e_{12}R}, ~~~ Q_4 = \dfrac{e_{11}R}{J(R)}, ~~~ Q_5 = e_{22}R, ~~~ Q_6 = e_{33}R.$$

\noindent Also, every indecomposable cyclic right $R$-module has a square-free socle, since ${\rm Soc}(Q_1) = J(R)$ is square-free,  ${\rm Soc}(Q_2) = \dfrac{J(R)}{e_{13}R}$, ${\rm Soc}(Q_3) = \dfrac{J(R)}{e_{12}R}$, and $Q_4, Q_5$ and $Q_6$ are simple. Then every right  $R$-module is a direct sum of square-free modules ($R$ is a (strongly) right co-K{\"o}the ring) while the right $R$-module $e_{11}R$ is not a direct sum of uniform modules ($R$ is not a very strongly right co-K{\"o}the ring), since $R$ is not right serial.}
 \end{Examp}

We end the paper by showing that (very strongly) co-K{\"o}the is not right-left symmetric. The following result is helpful.

 \begin{Pro}\label{Cor3.6}
 Let $R$ be a local ring with the unique maximal ideal $\cal M$ such that ${_R\cal M}$ is simple, length$({\cal M}_R) = 2$ and length$(E(_R(R/{\cal M}))) = 3$. Then every left $R$-module is a direct sum of uniform modules.
 \end{Pro}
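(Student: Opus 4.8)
The plan is to verify condition (7) of Theorem~\ref{very strongly left co-Kothe}: that $R$ is of finite representation type and every finitely generated indecomposable left $R$-module has a simple socle. The equivalence $(7)\Leftrightarrow(9)$ of that theorem then gives at once the desired conclusion that every left $R$-module is a direct sum of uniform modules.

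First I would extract the ring structure from the hypotheses. As $R$ is local, $\mathcal{M}=J(R)$ and $D:=R/\mathcal{M}$ is a division ring, so $S:={}_R(R/\mathcal{M})$ is the unique simple left $R$-module. Since ${}_R\mathcal{M}$ is simple, $\mathcal{M}^2$ is a submodule of ${}_R\mathcal{M}$, hence equals $0$ or $\mathcal{M}$; Nakayama's Lemma excludes $\mathcal{M}^2=\mathcal{M}$, so $\mathcal{M}^2=0$ and ${}_RR$ is uniserial of length $2$ with ${\rm soc}({}_RR)=\mathcal{M}\cong S$. Regarding $\mathcal{M}$ as a $D$-$D$-bimodule (legitimate since $\mathcal{M}^2=0$), the simplicity of ${}_R\mathcal{M}$ says $\dim{}_D\mathcal{M}=1$ while ${\rm length}(\mathcal{M}_R)=2$ says $\dim\mathcal{M}_D=2$; the asymmetry $1<2$ is what will force finiteness. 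I would also note that $E(S)$ is the unique indecomposable injective left module, that it is uniform with simple socle $S$, and that it has length $3$ by hypothesis.

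The core step is to classify the finitely generated indecomposable left modules and check that each has simple socle. Because $\mathcal{M}^2=0$, the entire $\mathcal{M}$-action on a left module $M$ is encoded by a single $\rho$-semilinear operator $N$ (multiplication by a left basis vector of $\mathcal{M}$) with $N^2=0$, where $\rho\colon D\to D$ is the endomorphism recording the bimodule twist, with $[D:\rho(D)]=2$ corresponding to $\dim\mathcal{M}_D=2$. For an indecomposable non-simple $M$ one first shows ${\rm soc}(M)=\mathcal{M}M$: any simple submodule of ${\rm soc}(M)$ not lying in $\mathcal{M}M$ maps isomorphically onto a line of ${\rm top}(M)$ and hence splits off as a copy of $S$, contradicting indecomposability. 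One then shows that $\dim_D{\rm soc}(M)\geq 2$ forces $M$ to decompose with a direct summand isomorphic to ${}_RR$. \emph{This decomposition argument is the main obstacle}, and it is precisely here that the asymmetry $\dim{}_D\mathcal{M}=1<2=\dim\mathcal{M}_D$ is essential: the associated species has valued graph the Dynkin diagram $B_2$ rather than the tame Kronecker diagram $\widetilde{A}_1$ of the symmetric case (which would produce infinitely many indecomposables of unbounded length). Concretely one verifies that a configuration with $\dim_D{\rm top}(M)=\dim_D{\rm soc}(M)=2$ always splits into two copies of ${}_RR$, and that no indecomposable has $\dim_D{\rm top}(M)\geq 3$; the surviving indecomposables are exactly $S$, ${}_RR$ and $E(S)$, each with simple socle.

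Finally, once every finitely generated indecomposable left $R$-module is known to have simple socle, such a module $M$ has ${\rm soc}(M)\cong S\leq_e M$ (as $R$ is left Artinian) and so embeds in $E(S)$ by injectivity, giving ${\rm length}(M)\leq{\rm length}(E(S))=3$. Hence $R$ is left Artinian of bounded representation type, thus of finite representation type by \cite[Proposition 54.3]{Wisbauer}. Finite representation type together with the simple-socle property is condition (7) of Theorem~\ref{very strongly left co-Kothe}, and the implication $(7)\Rightarrow(9)$ gives that every left $R$-module is a direct sum of uniform modules, as required.
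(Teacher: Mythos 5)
Your proof is correct in outline but takes a genuinely different route from the paper's. The paper's proof is essentially two citations: from $\mathcal{M}^2=0$ and ${}_R\mathcal{M}$ simple it invokes Nicholson--S\'anchez Campos to conclude that $R$ is left uniserial (so that ${}_RR$, $R/\mathcal{M}$ and $E({}_R(R/\mathcal{M}))$ are uniform), and then it cites \cite[Theorem 3.6]{Behboodi2} --- which is where all the real work lives --- to conclude that every left $R$-module is a direct sum of uniform modules. You instead make the argument (nearly) self-contained inside the present paper: you classify the finitely generated indecomposable left modules as $S$, ${}_RR$ and $E(S)$ via the bimodule/species structure of ${}_D\mathcal{M}_D$ with $(\dim_D\mathcal{M},\dim\mathcal{M}_D)=(1,2)$, deduce bounded, hence finite, representation type, and then apply the implication $(7)\Rightarrow(9)$ of Theorem~\ref{very strongly left co-Kothe} (ultimately Er's theorem). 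What your approach buys is transparency --- it exhibits exactly which indecomposables occur, explains via the $B_2$-versus-$\widetilde{A}_1$ dichotomy why the asymmetry $1<2$ in the hypotheses is what saves the day, and avoids reliance on the external Theorem 3.6 of \cite{Behboodi2}; what it costs is that the one genuinely hard step, the splitting argument showing no indecomposable exists beyond $S$, ${}_RR$, $E(S)$, is only sketched (you flag it yourself as ``the main obstacle''), so as written your argument is a correct strategy with its central lemma left to the reader, whereas the paper's version is complete modulo its citations. One small economy you could add: left uniseriality of $R$ needs no citation here, since any proper left ideal lies in $\mathcal{M}$, which is simple, so $0\subset\mathcal{M}\subset R$ is already the full submodule lattice of ${}_RR$.
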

 \begin{proof}
Clearly, $R$ is an Artinian ring. Moreover, ${\cal M}^2 = 0$ since ${_R\cal M}$ is simple. So by \cite[Theorem 9]{Nicolson}, $R$ is  a left uniserial ring. Thus,  the left $R$-modules $R$,  $R/\mathcal{M}$ and $E(R/\mathcal{M})$ are uniform. Therefore, \cite[Theorem 3.6]{Behboodi2},  every left $R$-module is a direct sum of uniform modules.
 \end{proof}

\begin{Examp}\textup {(\cite[Example 3.7]{Behboodi2})}\label{exam}
{\rm Let $F$ be a field isomorphic to one of its proper subfield $\bar{F}$ such that $[F:\bar{F}] =2$ (for example let $F=Z_{2}(y)$, where $Z_{2}(y)$ is the quotient field of polynomial ring $Z_{2}[y]$ and let $\bar{F} = Z_{2}(y^{2})$, where $Z_{2}(y^{2})$ is the subfield of $F$ that has elements of the form $f(y^{2})/g(y^{2})$ such that $f(y^{2})$, $g(y^{2})$ are polynomials). Let $R=:F[x; \alpha]$ be the ring of polynomials of the form $a_{0}+a_{1}x$, $a_{i} \in F$ with multiplication defined by the rule $rx =x\alpha(r)$, where $\alpha$ is the isomorphism from $F$ to $\bar{F}$, and $x^{2} = 0$ together with the distributive law. Then by these relations $R$ is a local ring with maximal ideal $\mathcal{M} = xR$ (see \cite[p. 113, Bj{\"o}rk Example]{Jain1}). Let $\{1, a\}$ be a basis for the vector space $F$ over $\bar{F}$. It can be checked that $\mathcal{M} =xR=Rx \oplus Rxa$ with $\mathcal{M}^{2} = (0)$, and $(R \oplus R)/(x, xa)R$ is an injective right $R$-module. Also, $R$ is not a principal left ideal ring. Then by Proposition \ref{Cor3.6} and Theorem \ref{left co-Kothe}, every right $R$-module is a direct sum of uniform modules. Hence,  $R$ is a (very strongly) right co-K{\"o}the ring. Moreover, since $R$ is not   a left uniserial ring, by Corollary \ref{very strongly left co-Kothe}, $R$ is not a (very strongly) left co-K{\"o}the ring.}
\end{Examp}

\begin{Rem}
{\rm By Corollary \ref{Cor prime ideals commutes-Abelian}, in the local case, the condition ``every left and right module is a direct
sum of square-free modules" (very strongly co-K{\"o}the ring) is equivalent to ``every left and
right module is a direct sum of uniform modules", and this is equivalent to ``every right or left
module is a direct sum of distributive modules. By Theorem \ref{very strongly left co-Kothe}, every right module is a  direct sum  of square-free modules is equivalent to every right module is a direct sum of  uniform modules,  but it is not equivalent to every right module is a direct sum of distributive modules. Note that in Example \ref{exam}, $E((R/\mathcal{M})_{R})$ is a uniform module that is not distributive. Thus, uniform modules are square-free but are not necessarily distributive even though they are finitely generated and Artinian. Moreover, the ring $R$ in Example \ref{exam} shows that it is possible to have a
right (very strongly) co-K{\"o}the ring over which not every right module is semidistributive.}
\end{Rem}

 \end{document}